\documentclass{amsart}

\usepackage{amsmath,amsfonts,amssymb,amsthm,amscd,comment,euscript}
\usepackage{stmaryrd}
\usepackage[all]{xy}
\usepackage{graphicx}
\usepackage{enumerate} 
\usepackage[colorlinks=true]{hyperref} 
\usepackage[usenames,dvipsnames]{xcolor}

%
%


%
%

\newcommand{\too}{\longrightarrow}                                  
\newcommand{\equalby}[1]{\overset{\textrm{#1}}=}             
\newcommand{\equalbyeq}[1]{\equalby{\eqref{#1}}}             
\newcommand{\tooby}[1]{\buildrel #1\over\too}                 

\newcommand{\Z}{\mathbb{Z}}               
\newcommand{\N}{\mathbb{N}}               
\newcommand{\Q}{\mathbb{Q}}              

\newcommand{\ot}{\otimes}                 
\newcommand{\id}{\mathrm{id}} 


%
%

\newcommand{\al}{\alpha}      
\newcommand{\la}{\lambda}   
\newcommand{\RS}{\Sigma}    
\newcommand{\cl}{\Lambda}  
\newcommand{\rl}{\cl_r}         
\newcommand{\wl}{\cl_w}       
\newcommand{\sco}{sc}          
\newcommand{\adj}{ad}          
\newcommand{\tor}{\mathfrak{t}} 

%
%

\newcommand{\lbr}{[\hspace{-1.5pt}[}  
\newcommand{\rbr}{]\hspace{-1.5pt}]}  
\newcommand{\FGR}[3]{#1 \lbr #2 \rbr_{#3}} 
\newcommand{\RcF}{\FGR{R}{\cl}{F}} 
\newcommand{\aug}{\epsilon}        
\newcommand{\IF}{\mathcal{I}_F}    
\newcommand{\IFR}[1]{\mathcal{I}_{F,#1}}    

%
%

\newcommand{\QW}{Q_W}
\newcommand{\qQ}{Q}
\newcommand{\sS}{S}
\newcommand{\Qc}{Q_{R,\cl}}     
\newcommand{\lr}{[\tfrac{1}{r}]} 

%
%

\newcommand{\de}{\delta}                   
\newcommand{\De}{\Delta}                  
\newcommand{\Del}{X}                        
\newcommand{\ED}{\mathcal{D}(\cl)_F} 
\newcommand{\eD}{\aug\ED}               
\newcommand{\DcF}{\mathbf{D}_F}    
\newcommand{\tDcF}{\tilde{\mathbf{D}}_F}     
\newcommand{\kp}{\kappa}              

%
%

\newcommand{\RWcopr}{\triangle_{R[W]}} 
\newcommand{\RWcoun}{\varepsilon_{R[W]}} 

\newcommand{\QWprod}{\odot} 

\newcommand{\QWRcopr}{\triangle_{\QW}} 
\newcommand{\incQtoQW}{\upsilon_\qQ} 
\newcommand{\incRWtoQW}{\upsilon_{R[W]}} 

\newcommand{\Qproj}{\pi} 
\newcommand{\Qinc}{\iota} 
\newcommand{\Qiso}{\psi} 
\newcommand{\Qker}{\ker(\Qproj)} 
\newcommand{\Qim}{\im(\QWRcopr)} 

\newcommand{\QWcopr}{\triangle} 
\newcommand{\QWcoun}{\varepsilon} 
\newcommand{\Dcoun}{\varepsilon} 

\newcommand{\TE}{\triangle^{\epsilon}}   

\newcommand{\Inc}{\Upsilon} 
\newcommand{\homcopr}{\chi} 
\newcommand{\EDcopr}{\triangle_{\ED}} 
\newcommand{\EDcoun}{\varepsilon_{\ED}} 

%
%

\newcommand{\ev}{\mathrm{ev}} 
\newcommand{\cmS}{c_S} 
\newcommand{\cmR}{c_R} 
\newcommand{\rhoS}{\rho_S} 
\newcommand{\rhoR}{\rho_R} 

%
%

\DeclareMathOperator{\im}{\mathrm{im}}         
\DeclareMathOperator{\Hom}{\mathrm{Hom}}
\DeclareMathOperator{\End}{\mathrm{End}}
\DeclareMathOperator{\Endc}{\mathrm{End^0}} 
\DeclareMathOperator{\Homc}{\mathrm{Hom^0}} 

\DeclareMathOperator{\hh}{\mathtt{h}}             

%

\theoremstyle{plain}
\newtheorem{theo}{Theorem}[section]
\newtheorem{prop}[theo]{Proposition}
\newtheorem{lem}[theo]{Lemma}
\newtheorem{cor}[theo]{Corollary}

\theoremstyle{definition}
\newtheorem{defi}[theo]{Definition}
\newtheorem{rem}[theo]{Remark}
\newtheorem{example}[theo]{Example}

\numberwithin{equation}{section}   

%
\begin{document}
%

\title{A coproduct structure on the formal affine Demazure algebra}

\author{Baptiste Calm\`es}
\author{Kirill Zainoulline}
\author{Changlong Zhong}

\address{Baptiste Calm\`es, Universit\'e d'Artois, Laboratoire de
  Math\'ematiques de Lens, France}
\email{baptiste.calmes@univ-artois.fr}

\address{Kirill Zainoulline, Department of Mathematics and Statistics,
University of Ottawa, Ottawa ON, K1N6N5, Canada}
\email{kirill@uottawa.ca}

\address{Changlong Zhong, 
Department of Mathematics and Statistical Sciences, 
University of Alberta, Canada}
\email{zhongusc@gmail.com}

\thanks{The first author acknowledges the support of the French Agence Nationale de la Recherche (ANR) under reference ANR-12-BL01-0005. The second author was supported by 
the NSERC Discovery grant  385795-2010, 
NSERC DAS grant 396100-2010 and the Early Researcher Award (Ontario). 
The last author appreciates the support of the Fields Institute.}

\subjclass[2010]{20C08, 14F43}

\keywords{oriented cohomology, formal group law, Demazure operator,
  nil Hecke ring, algebraic cobordism, Schubert calculus}

\maketitle

\begin{abstract}
In the present paper, we generalize the coproduct structure 
on nil Hecke rings introduced and studied by Kostant and Kumar 
to the context of an arbitrary algebraic oriented cohomology theory 
and its associated formal group law. 
We then construct an algebraic model of
the $T$-equivariant oriented cohomology of the variety of complete flags.
\end{abstract}

\setcounter{tocdepth}{3}
\tableofcontents

%

In \cite{KK}, \cite{kk} and \cite{Ku},
Kostant and Kumar introduced the language of (nil-)Hecke rings 
and described in a purely algebraic way equivariant
cohomology and $K$-theory of flag varieties.
In particular, for a given split simply connected semisimple linear
algebraic group $G$ with a maximal split torus $T$ and a Borel
subgroup $B\supset T$, they established two isomorphisms
(see \cite[Thm.~4.4]{kk}, \cite[Thm.~11.5.15]{Ku}) 
\[
S(\cl)\ot_{S(\cl)^W} S(\cl) \tooby{\simeq} H_T(G/B;\Q)
\quad\text{and}\quad
\Z[\cl] \ot_{\Z[\cl]^W} \Z[\cl] \tooby{\simeq} K_T(G/B),
\]
where $S(\cl)$ is the symmetric algebra of the group of characters
$\cl$ of $T$, 
$\Z[\cl]$ is the integral group ring, 
$S(\cl)^W$ and $\Z[\cl]^W$ denote the respective subrings of
invariants under the action of the Weyl
group $W$, while 
$H_T(G/B,\Q)$ and $K_T(G/B)$ are respectively the
$T$-equivariant cohomology
and the $K$-theory of the variety of Borel subgroups $G/B$.

More precisely, they constructed algebraic models ${\tt \cl}$ and ${\tt \Psi}$ of the mentioned equivariant theories, dual respectively to the nil-Hecke and Hecke rings, with product structure ${\tt \cl}$ and ${\tt \Psi}$ induced
by the coproduct structure on the (nil-)Hecke ring.
They also defined explicit maps inducing ring isomorphisms
\begin{equation}\label{KK_eq}
S(\cl)\ot_{S(\cl)^W} S(\cl)\tooby{\simeq} {\tt \cl}
\quad \text{and}\quad 
\Z[\cl] \ot_{\Z[\cl]^W} \Z[\cl] \tooby{\simeq} {\tt \Psi}. \tag{$\star$}
\end{equation}
They showed that all Hecke-type (Demazure,
BGG) operators on $S(\cl)$ and $\Z[\cl]$ actually sit in a bigger but
simpler ring in the so
called twisted group
algebra $\QW$ and, hence, they defined the respective coproduct to be
the restriction of a natural one given on $\QW$.

One goal of the present paper is to generalize the Kostant-Kumar construction 
of the duals ${\tt \cl}$ and ${\tt \Psi}$ and of the coproduct
for an arbitrary one-dimensional commutative formal
group law $F$ 
over a coefficient ring $R$ 
(the additive formal group law corresponds to  ${\tt \cl}$ and the multiplicative one to ${\tt \Psi}$). 
We achieve this by replacing ${\tt \cl}$ and ${\tt \Psi}$ by the dual of the formal affine Demazure algebra $\DcF^\star$, already introduced in \cite{HMSZ} as a generalization of the nil-Hecke ring, and by defining the coproduct on $\DcF$ via the natural coproduct given on the $F$-version of $\QW$.

As an application  we provide a complete
description of the algebra $\DcF$ in terms of generators and
relations (see Theorem~\ref{presentationDF_theo}), hence, generalizing
\cite[Thm.~6.14]{HMSZ} and, in particular, we show 
(see Theorem~\ref{mainthmdual_theo}): 

\medskip
\noindent
{\bf Theorem.}
If the torsion index $\tor$ of the root datum of $G$ is invertible and $2$ is not a zero
divisor in $R$, then there is an isomorphism of $R$-algebras
\[
\RcF \ot_{\RcF^W} \RcF \tooby{\simeq} \DcF^\star,
\]
where $\RcF$ is the formal group algebra introduced in \cite[\S2]{CPZ}.

\medskip

Note that specializing to the additive and multiplicative periodic formal group laws (see Example~\ref{addmult_ex}), we obtain the isomorphisms \eqref{KK_eq}. Also note that the algebra $\DcF^\star$ is defined under very weak assumptions on
the base ring $R$, e.g. $2$ is not a zero divisor in $R$, hence
allowing us to consider various formal group laws over finite fields
as well.

According to Levine-Morel \cite{levmor-book}, to any
algebraic oriented cohomology theory $\hh(-)$ one can associate a
formal group law $F$ over the coefficient ring $R=\hh(pt)$; moreover, over a base field of characteristic $0$,
there is a universal algebraic oriented cohomology theory $\Omega(-)$, 
called algebraic cobordism,
corresponding to the universal formal group law $U$ over the Lazard ring
$\mathbb{L}=\Omega(pt)$.
An associated $T$-equivariant theory, denoted by $\Omega_T(G/B)$, has been intensively
studied
during the last few years by Deshpande, Heller, Kiritchenko, Krishna and
Malagon-Lopez.
In particular, 
using comparison results and the Atiyah-Hirzebruch spectral sequence for complex cobordism 
it was shown \cite[Thm.~5.1]{KiKr} that after inverting the torsion
index the ring
$\Omega_T(G/B)$ can be identified 
with
$\mathbb{L}[\cl]_U\otimes_{\mathbb{L}[\cl]_U^W}\mathbb{L}[\cl]_U$.
We also refer to \cite{HHH} for similar results concerning
$T$-equivariant topological complex oriented theories and to
\cite{GaRa} for further discussion about the Borel model for
$\Omega_T(G/B)$.
In this context, our (Hecke-type) algebra $\DcF^\star$ can be used as an algebraic model for an algebraic oriented $T$-equivariant cohomology ring $\hh_T(G/B)$. A precise justification of this claim can be found in \cite{CZZ} and \cite{CZZ2}; in the present article we only deal with the algebraic construction, with no particular reference to geometry.

This paper is based on the techniques developed in \cite{CPZ} and especially in 
\cite{HMSZ}, which is devoted to the unification of various existing geometric approaches to
Hecke-type algebras. 
It is organized as follows: 

In section~\ref{sec:formgr}, we recall basic properties of the formal group
algebra $\RcF$ following \cite[\S2]{CPZ}.
In section~\ref{sec:rootsyst}, we discuss the regularity of elements of $\RcF$ corresponding to roots.
In section~\ref{sec:localized}, we study properties of the localization
of $\RcF$. 
In section~\ref{FormalDemazureOperators_sec},
we recall the definition of formal Demazure operators.
Section~\ref{sec:twisted} is devoted to the twisted formal group
algebra $\QW$ and the formal affine Demazure algebra $\DcF$.
There, we provide an important formula (Lemma~\ref{Deldecomp_lem})
expressing a product of Demazure elements
in terms of the canonical basis of $\QW$. We finish our preparation 
by relating the formal affine Demazure algebra to the subalgebra $\tDcF$ of $\QW$ stabilizing $\RcF$ (section~\ref{sec:invar}) and we show that they coincide in most cases.

In section~\ref{sec:present}, we describe the formal affine Demazure
algebra $\DcF$ in terms of generators and relations, hence,
generalizing \cite[Thm.~6.14]{HMSZ}. We also establish an isomorphism
(Theorem~\ref{DF=ED_theo}) between $\DcF$
and the algebra of Demazure operators on $\RcF$.
In key sections~\ref{sec:copr}, \ref{sec:copraffine} and
\ref{sec:compare}, we introduce and study the
coproduct structure on $\DcF$. For instance, we show (section~\ref{sec:compare}) that the
coproduct on $\DcF$ induces the coproduct on the cohomology ring
considered in \cite[\S7]{CPZ}. In section~\ref{sec:dualaffine}, we
apply the previously obtained results to construct the dual $\DcF^\star$ and
to prove  Theorem~\ref{mainthmdual_theo}. 

We expect that the mentioned results can be extended to root
systems of symmetrizable Kac-Moody algebras. 
However, it doesn't seem to be so straightforward: it requires adjusting arguments involving the torsion index or elements of maximal length (which do not exist in the Kac-Moody case); it will be the subject of a forthcoming paper.
Finally, we would like to mention the paper \cite{Zh} where some of our results (e.g. Theorem~\ref{presentationDF_theo})
have been given a simpler form in the special case of a normalized formal group law. 
\medskip

{\it Acknowledgments:}
The first author would like to thank Lina and Ren\'e Payet for their
hospitality; most of his work on the present article has been done
during a stay at their home.
The first and the second authors are grateful to the Banff International Research
Station for its hospitality in October 2012, which lead to a fruitful
collaboration on the subject of the paper.
All authors are also grateful to the Fields Institute for its hospitality
and support during the spring of 2013.

\section{Formal group algebras}\label{sec:formgr}

We recall definition and basic properties of
a formal group algebra, following \cite[\S2]{CPZ}. 

\medskip

Let $F$ be a one dimensional formal group law 
over a commutative (unital) ring $R$ (see \cite{Haz}, \cite[p. 4]{levmor-book}).
Given an integer $m\ge 0$ we use the notation
\[
  u +_F v =F(u,v),\quad
  m \cdot_F u :=\underbrace{u+_F \cdots +_F u}_{m \text{ times}},\quad
  \text{and }\quad
  (-m)\cdot_F u :=-_F(m\cdot_F u),
\]
where $-_F u$ denotes the formal inverse of $u$. 

Let $\cl$ be a lattice, i.e. a finitely generated free abelian group, 
and let $R[x_\cl]$ denote the polynomial ring over $R$ 
with variables indexed by $\cl$.
Let $\aug\colon R[x_\cl] \to R$ be the augmentation morphism
which maps $x_\la$ to $0$ for each $\la\in \cl$.  
Let $R\lbr x_\cl \rbr$ be the $\ker(\aug)$-adic completion of
the polynomial ring $R [x_\cl]$.  
Let $J_F$ be the closure of the ideal generated by $x_0$ and elements of the form 
$x_{\la_1+\la_2}-(x_{\la_1}+_F x_{\la_2})$ for all
$\la_1,\la_2 \in \cl$.  

Following \cite[Def.~2.4]{CPZ}, we define the \emph{formal group algebra} 
(or \emph{formal group ring}) to be the quotient 
\[
    \RcF := R\lbr x_\cl \rbr /J_F.
\]
The class of $x_\la$ in $\RcF$ will be denoted identically.  
By definition, $\RcF$ is a complete Hausdorff $R$-algebra 
with respect to the $\IF$-adic topology, 
where $\IF=\ker \aug$ with $\aug\colon \RcF \to R$ the induced
augmentation map. 
We set $\IF^{-i}=\RcF$ if $i\ge 0$.

Recall from \cite[Cor. 2.13]{CPZ} that choosing a basis
$\{\la_1,\ldots,\la_n\}$ of the lattice $\cl$ gives a canonical isomorphism 
$\RcF \simeq R\lbr x_1,\ldots,x_n\rbr$ 
sending 
\[
x_{\sum m_i
  \la_i} \mapsto m_1\cdot_F x_1 +_F \cdots +_F m_n \cdot_F x_n
\in m_1 x_1 + \cdots +m_n x_n +I^2,
\]
where $I=(x_1,\ldots,x_n)$. 

\begin{example}\label{addmult_ex} (see \cite[Ex.~2.18 and Ex.~2.19]{CPZ})

For the {\em additive} formal group law over $R$ given by $F_a(u,v)=u+v$ we have an $R$-algebra isomorphism
$\RcF \simeq S_R(\cl)^\wedge$,
where $S^i_R(\cl)$ is the $i$-th symmetric power of $\cl$ over $R$,
the completion is at the kernel of the augmentation map $x_\la
\mapsto 0$ 
and the isomorphism is induced by $x_\la \mapsto \la\in S^1_R(\cl)$. 

Consider the group ring 
$
R[\cl]:=
\big\{ \sum_j a_j e^{\la_j}  
\mid  a_j\in R,\; \la_j\in \cl \big\}$. 
Let ${\rm tr}\colon R[\cl] \to R$ be the trace map, i.e.  
a $R$-linear map sending every $e^\la$ to $1$. 
Let $R[\cl]^\wedge$ denote the completion of $R[\cl]$ at $\ker({\rm
  tr})$.

For the {\em multiplicative periodic} formal group law over $R$
given by $F_m(u,v)=u+v-\beta uv$, 
where $\beta$ is an invertible element in $R$, we have an $R$-algebra isomorphism
$\RcF \simeq R[\cl]^\wedge$ 
induced by
$x_\la \mapsto \beta^{-1}(1- e^\la)$ and 
$e^\la \mapsto (1-\beta x_\la)=(1-\beta x_{-\la})^{-1}$.

Consider the \emph{universal} formal group law $U(u,v)=u+v+a_{11}uv+\ldots$. Its coefficient ring is the Lazard
ring $\mathbb{L}$. By universality there is a canonical ring
homomorphism $\FGR{\mathbb{L}}{\cl}{U} \to S_\Z(\cl)^\wedge$ 
(resp. $\FGR{\mathbb{L}}{\cl}{U} \to \Z[\beta,\beta^{-1}][\cl]^\wedge$) given by sending all the
coefficients $a_{ij}$ of $U$ to $0$ (resp. all coefficients except $a_{11}$ to 0
and $a_{11}\mapsto -\beta$). 
\end{example}

We will need the following facts concerning
formal group algebras.
We refer to the appendix for the properties and definition of regular elements.

\begin{lem}\label{regularinject_lem}
Let $\cl \subseteq \cl'$ be two lattices. If $|\cl'/\cl|$ is regular
in $R$, 
then the natural map $\RcF \to \FGR{R}{\cl'}{F}$ is an injection. 
\end{lem}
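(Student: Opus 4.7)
The plan is to pick bases of $\cl$ and $\cl'$ that diagonalize the inclusion, and then reduce the statement to a computation with leading homogeneous terms in a power series ring. Since $|\cl'/\cl|$ is assumed to be regular in $R$, in particular it is a nonzero integer, so $\cl \subseteq \cl'$ have the same rank $n$. Applying the structure theorem for finitely generated abelian groups (Smith normal form) to this inclusion, I get a basis $\{\la'_1,\ldots,\la'_n\}$ of $\cl'$ and positive integers $m_1,\ldots,m_n$ with $m_1\cdots m_n=|\cl'/\cl|$ such that $\{m_i\la'_i\}_{i=1}^n$ is a basis of $\cl$. Using the canonical isomorphism recalled just before Example~\ref{addmult_ex}, I identify $\RcF \simeq R\lbr x_1,\ldots,x_n\rbr$ with $x_i$ corresponding to $x_{m_i\la'_i}$, and $\FGR{R}{\cl'}{F} \simeq R\lbr y_1,\ldots,y_n\rbr$ with $y_i$ corresponding to $x_{\la'_i}$. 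Under these identifications the natural map becomes the continuous $R$-algebra homomorphism $\phi$ sending $x_i \mapsto m_i \cdot_F y_i = m_i y_i + (\text{terms of total degree} \geq 2\text{ in the }y_j)$.

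To prove $\phi$ is injective, suppose $f \in R\lbr x_1,\ldots,x_n\rbr$ is nonzero. Write $f = \sum_{d \geq d_0} f_d$ as the convergent sum of its homogeneous components with $f_{d_0} \neq 0$ and $f_{d_0} = \sum_{|\al|=d_0} a_\al x^\al$. Since each tail $(m_i \cdot_F y_i) - m_i y_i$ has total degree $\geq 2$ in the $y_j$'s, the terms $f_d$ for $d > d_0$ contribute only in total degrees $> d_0$ to $\phi(f)$, and so do the tails coming from $f_{d_0}$. Hence the degree-$d_0$ homogeneous component of $\phi(f)$ is exactly $f_{d_0}(m_1 y_1, \ldots, m_n y_n) = \sum_{|\al|=d_0} a_\al m^\al y^\al$, where $m^\al := m_1^{\al_1}\cdots m_n^{\al_n}$. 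If $\phi(f)=0$, then $a_\al m^\al = 0$ for all multi-indices $\al$ with $|\al|=d_0$. By hypothesis $m_1\cdots m_n$ is regular in $R$, hence each $m_i$ is a factor of a regular element and is itself regular, and $m^\al$ is then regular as a product of regular elements. Therefore $a_\al=0$ for every such $\al$, contradicting $f_{d_0}\neq 0$.

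The argument is essentially routine once the inclusion is put in diagonal form; the main things to verify carefully are (i) that one can read off the lowest-degree homogeneous component in the completion (which works because the $\IF$-adic filtration coincides with the filtration by total degree in $x_1,\ldots,x_n$), and (ii) that factors of regular elements are regular in a commutative ring, which is the key input that turns the hypothesis on $|\cl'/\cl|$ into vanishing of the coefficients $a_\al$. I do not expect any deeper obstacle beyond these two bookkeeping points.
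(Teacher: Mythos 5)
Your proof is correct, but it takes a different route from the paper's at the key step. The paper picks arbitrary bases, notes via Lemma~\ref{detmatrix_lem} that the change-of-basis matrix $A$ has $\det(A)=\pm|\cl'/\cl|$, and then invokes Corollary~\ref{changevarinj_prop}, whose proof passes to the total ring of fractions $T$ of $R$ (where $\det(A)$ becomes invertible, so the substitution is bijective over $T$ by Lemma~\ref{changevarbij_prop}) and deduces injectivity over $R$ by restriction. You instead diagonalize the inclusion by Smith normal form, so the substitution becomes $x_i\mapsto m_i\cdot_F y_i$ with $\prod_i m_i=|\cl'/\cl|$, and then argue directly on the lowest-degree homogeneous component, using that a divisor of a regular element is regular. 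Both arguments are sound; yours is more elementary and self-contained (no total ring of fractions, no Bourbaki bijectivity criterion), at the cost of using the structure theorem for the lattice inclusion, whereas the paper's Corollary~\ref{changevarinj_prop} is a general statement about power-series substitutions with regular Jacobian determinant that does not require diagonalizing first. The two bookkeeping points you flag --- that the lowest-degree component of the image is computable term by term (since the substitution sends $\IF^d$ into degree $\geq d$), and that factors and products of regular elements are regular --- are exactly the right things to check, and both hold.
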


\begin{proof}
Choose bases of $\cl$ and $\cl'$, 
and let $A$ be the matrix expressing the vectors of the first in terms
of the second. Then by Lemma~\ref{detmatrix_lem}, $\det(A)=\pm|\cl'/\cl|$. The lemma follows 
from Corollary~\ref{changevarinj_prop} through isomorphisms $\RcF \simeq R\lbr x_1,\ldots,x_n\rbr$ and $\FGR{R}{\cl'}{F}\simeq R\lbr x_1,\ldots,x_n\rbr$ associated to the bases. 
\end{proof}

\begin{lem} \label{regularirred_lem}
Let $\cl = \cl_1 \oplus \cl_2$ be a direct sum of two lattices and let
$\la \in \cl_1$. 
Then $x_\la$ is regular in $\RcF$ if and only if it is regular in $\FGR{R}{\cl_1}{F}$.
\end{lem}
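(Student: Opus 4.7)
The plan is to reduce the statement to the standard fact that regularity in a commutative ring $A$ is equivalent to regularity in the formal power series ring $A\lbr y_1,\ldots,y_m\rbr$, via an iterated completion presentation of $\RcF$.

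First I will choose bases $\{\la_1,\ldots,\la_k\}$ of $\cl_1$ and $\{\mu_1,\ldots,\mu_m\}$ of $\cl_2$. Their concatenation is a basis of $\cl=\cl_1\oplus\cl_2$, so by \cite[Cor.~2.13]{CPZ} I obtain compatible isomorphisms
\[
\FGR{R}{\cl_1}{F}\simeq R\lbr x_1,\ldots,x_k\rbr \qquad\text{and}\qquad \RcF\simeq R\lbr x_1,\ldots,x_k,y_1,\ldots,y_m\rbr,
\]
under which the natural map $\FGR{R}{\cl_1}{F}\to\RcF$ is identified with the inclusion of the coefficient ring. Setting $A:=R\lbr x_1,\ldots,x_k\rbr$, the standard iterated completion identification $R\lbr x_1,\ldots,x_k,y_1,\ldots,y_m\rbr\simeq A\lbr y_1,\ldots,y_m\rbr$ then gives $\RcF\simeq\FGR{R}{\cl_1}{F}\lbr y_1,\ldots,y_m\rbr$, and since $\la\in\cl_1$ the element $x_\la\in\RcF$ corresponds to $x_\la\in\FGR{R}{\cl_1}{F}$ viewed as a constant power series in the $y_j$.

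It will then suffice to prove: for any commutative ring $A$ and any $f\in A$, $f$ is regular in $A$ if and only if its image is regular in $A\lbr y_1,\ldots,y_m\rbr$. The direction ``regular in $A\lbr y_1,\ldots,y_m\rbr$ implies regular in $A$'' is immediate, since any zero divisor relation $fg=0$ with $0\neq g\in A$ persists in the extension. For the converse, given $g=\sum_I g_I y^I\in A\lbr y_1,\ldots,y_m\rbr$ with $fg=0$, I compare coefficients of each monomial $y^I$ to get $fg_I=0$ in $A$; regularity of $f$ in $A$ then forces $g_I=0$ for all $I$, whence $g=0$.

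The only mildly non-trivial ingredient is the iterated completion isomorphism $R\lbr x,y\rbr\simeq R\lbr x\rbr\lbr y\rbr$, which I will dispatch by identifying both sides with $\varprojlim_n R[x,y]/J^n$ for compatible cofinal filtrations by ideals of definition. I do not anticipate any real obstacle here; the substance of the lemma is the observation that the direct sum decomposition of $\cl$ translates, on the level of formal group algebras, into an iterated power series presentation, after which regularity passes through transparently.
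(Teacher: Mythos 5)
Your proof is correct and follows essentially the same route as the paper: both realize $\RcF$ as a formal power series ring over $\FGR{R}{\cl_1}{F}$ in which $x_\la$ becomes a constant (the paper cites the natural isomorphism $\FGR{R}{\cl_1\oplus\cl_2}{F}\simeq\FGR{(\FGR{R}{\cl_1}{F})}{\cl_2}{F}$ of \cite[Thm.~2.1]{CPZ}, while you reconstruct it via chosen bases and the iterated completion), and then transfer regularity of a constant across the power series extension. The only cosmetic difference is that the paper invokes its McCoy-based Corollary~\ref{regularpowerseries_lem} for the nontrivial direction, whereas you give the direct coefficient-comparison argument, which suffices precisely because $x_\la$ is a constant.
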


\begin{proof}
By \cite[Thm.~2.1]{CPZ}, there is a natural isomorphism of
$\FGR{R}{\cl_1}{F}$-algebras $\FGR{R}{\cl_1\oplus \cl_2}{F} \simeq
\FGR{(\FGR{R}{\cl_1}{F})}{\cl_2}{F}$. The result then follows by
Corollary~\ref{regularpowerseries_lem}
applied to $f=x_\la\in R'=\FGR{R}{\cl_1}{F}$ and $\FGR{R'}{\cl_2}{F}\simeq R'\lbr x_1,\ldots, x_l\rbr$ with $l$ the rank of $\cl_2$.
\end{proof}

\section{Root systems and regular elements}\label{sec:rootsyst}

We recall several auxiliary facts concerning
root datum following \cite[Exp.~XXI]{SGA} and \cite{Bo68}. 
We provide a criteria for regularity of a generator of the formal group
algebra (see Lemma~\ref{rootinj_lem}).

\medskip

Following \cite[Exp.~XXI, \S1.1]{SGA} we define a {\em root datum} to be an embedding
$\RS\hookrightarrow \cl^\vee$, $\al\mapsto \al^\vee$, 
of a non-empty finite subset $\RS$ of a
lattice $\cl$ into its dual $\cl^\vee$
such that 
\begin{itemize}
\item
$\RS \cap 2\RS=\emptyset$, 
$\al^\vee(\al)=2$ for all $\al \in \RS$, and
\item
$\beta - \al^\vee(\beta)\al \in \RS$ and 
$\beta^\vee - \beta^\vee(\al)\al^\vee \in \RS^\vee$ 
for all $\al,\beta \in \RS$, where $\RS^\vee$ denotes the image of
$\RS$ in $\cl^\vee$.
\end{itemize}
The elements of $\RS$ (resp. $\RS^\vee$) are called roots (resp. coroots). 
The sublattice of $\cl$ generated by $\RS$ 
is called the {\em root lattice} and is denoted by $\rl$. The rank of
$\cl_\Q$ is called the rank of the root datum.
A root datum is called irreducible if it is not a direct sum of root data of smaller ranks.
The sublattice of $\cl_\Q=\cl\ot_\Z \Q$
generated by all $\omega\in \cl_\Q$ such that 
$\al^\vee(\omega)\in \Z$ for all $\al\in\RS$ 
is called the {\em weight lattice} and is denoted by $\wl$. 
Observe that by definition $\rl\subseteq \wl$ and
$\rl\ot_\Z\Q=\wl\ot_Z \Q$.
A root datum is called semisimple if $\cl_\Q=\rl
\ot_\Z\Q$. 
From now on by a root datum we will always mean a semisimple one.
Observe that in this case $\rl\subseteq \cl\subseteq \wl$.

The root lattice $\rl$ admits a basis
$\Pi=\{\al_1,\ldots,\al_n\}$ 
such that 
each $\al \in \RS$ is a linear combination of $\al_i$'s with 
either all positive or all negative coefficients. 
So the set $\RS$ splits into two disjoint subsets 
$\RS = \RS_+ \amalg \RS_-$, where
$\RS_+$ (resp. $\RS_-$) is called the set of positive (resp. negative)
roots.  The roots $\al_i$ are called {\em simple roots}.

Given the set $\Pi$ we define the set of {\em fundamental weights}
$\{\omega_1,\ldots,\omega_n\}\subset \wl$
as $\al_i^\vee(\omega_j)=\de_{ij}$, 
where $\de_{ij}$ is the Kronecker symbol. 
Fundamental weights form a basis of the weight lattice $\wl$.
The matrix expressing simple roots in terms of fundamental weights 
is called the Cartan matrix of the root datum.

If $\cl=\wl$ (resp. $\cl=\rl$), 
then the root datum is called simply connected (resp. adjoint) and 
is denoted by $\mathcal{D}^{\sco}_n$ (resp. $\mathcal{D}_n^{\adj}$), 
where $\mathcal{D}=A,B,C,D,E,F,G$ is one of the Dynkin types and $n$
is its rank. Observe that an irreducible root datum is uniquely
determined by its Dynkin type and the lattice $\cl$.

Determinant of the Cartan matrix of an irreducible root
datum coincides with $|\wl/\rl|$, and can be found in the tables at
the end of \cite{Bo68} under the name ``indice de connexion''. For
future reference, we provide 
the list of determinants and the list
of simply connected torsion primes, i.e. prime divisors of the torsion index of the associated simply connected root datum; these can be found in \cite[Prop.~8]{Dem73}.

\smallskip

{\small
\begin{center}\label{detprimes_table}
\begin{tabular}{l||c|c|c|c|c|c|c|c|c}
Type (s. connected) & $A_l$ & $B_l$, $l\geq 3$ & $C_l$ & $D_l$, $l\geq 4$ & $G_2$ & $F_4$ & $E_6$ & $E_7$ & $E_8$ \\
Determinant & $l+1$ & $2$ & $2$ & $4$ & $1$ & $1$ & $3$ & $2$ & $1$ \\
Torsion primes & $\emptyset$ & $2$ & $\emptyset$ & $2$ & $2$ & $2,3$ & $2,3$ & $2,3$ & $2,3,5$ \\
\end{tabular}
\end{center}
}

\medskip

In several of the following statements, the root data $C_n^{\sco}$, $n\geq 1$ 
need to be treated separately. 
Note that $A_1^{\sco}=C_1^{\sco}$ and $B_2^{\sco}=C_2^{\sco}$ 
are among these special cases. 

\begin{lem} \label{completebasis_lem}
Any root $\al \in \RS$ can be completed to a basis of $\cl$ except if it is
a long root in an irreducible component $C_n^{\sco}$, $n\geq 1$, of the
root datum, in which case, it can still be completed to a
basis of $\cl$ over $\Z[\tfrac{1}{2}]$.
\end{lem}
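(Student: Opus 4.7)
The plan is to translate the statement into primitivity of $\al$ in $\cl$, reduce to simple roots via Weyl-transitivity, and inspect the Cartan matrix of each irreducible type.

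First, $\cl$ decomposes according to the decomposition of the root datum into irreducible components, and $\al$ lies in a single component; a basis of $\cl$ is obtained by combining bases of each component, so I may assume the datum is irreducible. Since the Weyl group $W$ preserves $\cl$ and acts transitively on the set of roots of a given length, and since completability to a $\Z$-basis is $W$-invariant, I may further assume $\al = \al_j$ is a simple root.

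I now use the standard equivalence: an element $\lambda$ of a finitely generated free abelian group $\cl$ extends to a $\Z$-basis if and only if it is primitive, equivalently if and only if some $\Z$-linear form $f \in \cl^\vee := \Hom(\cl,\Z)$ satisfies $f(\lambda) = 1$. Since $\rl \subseteq \cl \subseteq \wl$, the coroot lattice (the dual of $\wl$) sits inside $\cl^\vee$; in particular every coroot $\al_i^\vee$ gives such a form. Because $\al^\vee(\al) = 2$, the image of the evaluation map $\cl^\vee \to \Z$, $f \mapsto f(\al)$ is either $\Z$ or $2\Z$; hence $\al$ is primitive iff some $f \in \cl^\vee$ pairs oddly with $\al$. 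Examining the $j$-th column of the Cartan matrix, i.e. the coroot pairings $\al_i^\vee(\al_j)$, I look for an odd entry: in simply-laced types ($A_n$ with $n\ge 2$, $D_n$, $E_n$) every column contains a $-1$; in $B_n$ ($n\ge 3$), $F_4$, $G_2$ every simple root has a neighbor with an odd Cartan coefficient ($-1$ or $-3$); the only column whose entries are all even is that of the long simple root of $C_n$ ($n \ge 1$), whose unique Dynkin neighbor (when $n \ge 2$) contributes $-2$.

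In the non-exceptional cases, primitivity in $\cl$ is immediate. For $C_n^{\adj}$ (where $\cl=\rl$), although the column of the long simple root has only even entries, the dual $\cl^\vee = \rl^\vee$ properly contains the coroot lattice and includes the element $\mu := \tfrac{1}{2}(e_1+\cdots+e_n)$, which pairs to $1$ with $\al = 2e_n$, giving primitivity. The only obstructed case is $C_n^{\sco}$ (with $\cl = \wl = \Z^n$): here $\cl^\vee$ is exactly the coroot lattice, every pairing with $\al = 2e_n$ lies in $2\Z$, and indeed $\al = 2 \cdot e_n$ with $e_n\in\cl$, so $\al$ is not primitive over $\Z$. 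Over $\Z[\tfrac{1}{2}]$, however, $2$ is a unit and the family $\{2e_n, e_1, \ldots, e_{n-1}\}$ has determinant $\pm 2$, hence forms a basis of $\cl \otimes \Z[\tfrac{1}{2}]$; applying $W$ transports this basis to one containing any Weyl-conjugate long root. The main obstacle is the case-by-case inspection of Cartan matrices, which is entirely routine given the standard tables in \cite{Bo68}.
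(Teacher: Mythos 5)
Your proof is correct, and it is organized differently from the paper's in a way that buys some uniformity. The paper also reduces to the case of a simple root of an irreducible component (via the fact that every root lies in some base), but it then splits into three cases according to the lattice: for $\cl=\rl$ the base itself is a basis; for $\cl=\wl$ it writes $\al=\sum_i k_i\omega_i$ (the same Cartan column you inspect), observes that some $k_i=-1$ except for $G_2$ and the long root of $C_n$, and invokes the unimodular-row lemma; and for the intermediate lattices (possible only in types $A_n$, $D_n$) it runs a separate change-of-basis argument transferring unimodularity from $\wl$ down to $\cl$. Your parity criterion --- $\al^\vee(\al)=2$ forces the image of $f\mapsto f(\al)$ on $\cl^\vee$ to be $\Z$ or $2\Z$, while the coroots restrict to integral forms on \emph{every} $\cl$ with $\rl\subseteq\cl\subseteq\wl$ --- collapses all three cases into a single check on the Cartan column, eliminating the intermediate-lattice argument entirely; it also absorbs $G_2$ without a side remark, since $-3$ is odd even though it is not $-1$. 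The small price is that the long root of $C_n^{\adj}$, which the paper dispatches for free in the adjoint case (a simple root belongs to the basis $\Pi$ of $\rl$), needs your extra form $\tfrac12(e_1+\cdots+e_n)\in\rl^\vee$; and the reduction to simple roots should be read as using that each of the at most two $W$-orbits of roots meets $\Pi$. The endgame for $C_n^{\sco}$ over $\Z[\tfrac12]$ is the same determinant-$\pm 2$ computation in both proofs.
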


\begin{proof}
We may assume that the root datum is irreducible. 

\smallskip

\noindent {\bf 1.} 
First, we can choose simple roots of $\rl$ as
$\Pi=\{\al=\al_1,\al_2,\ldots,\al_n\}$ 
by \cite[Ch.~IV, \S 1, Prop.~15]{Bo68}, 
hence, proving the statement in the adjoint case ($\cl=\rl$). 

\smallskip

\noindent {\bf 2.} 
If $\cl=\wl$, we express the root $\al$ in terms
of the fundamental weights corresponding to $\Pi$ as
$\al=\sum_{i=1}^n k_i \omega_i$. Looking at the Cartan matrix
we see that $k_i=-1$ for some $i$ except in type $G_2$ or 
if $\al=\al_n$ is the long root in type $C_n$, $n\geq 1$. 
In other words, except in these two cases 
$\al$ can be completed to a basis of $\wl$
by~Lemma~\ref{unimodular_lem} as $(k_1,...,k_n)$ is unimodular. 
Since the $G_2$ case is adjoint, it has already been considered. 
Finally, in the $C_n$ case, since $k_i=2$ for some $i$, the root $\al$ 
can be completed to a basis of $\wl$ after inverting $2$.

\smallskip

\noindent {\bf 3.} 
The remaining case $\rl\subsetneq \cl \subsetneq \wl$ 
can only happen in type $D_n$ or $A_n$, both with $n\geq 3$, 
see the above table. 

Pick a basis $\{\la_1,\ldots,\la_n\}$ of $\cl$ and 
express $\al=\sum_{i=1}^n k_i \la_i$. 
Then, complete $\al$ to a basis $\{\al=e_1,e_2,\ldots,e_n\}$ of $\wl$, 
as in the previous step. 
Let $A=(a_{ij})$ be the matrix whose columns express the basis $\{\la_i\}$
in terms of the basis $\{e_i\}$. 
Applying $A$ to the vector $(k_i)$ gives $\al=e_1$. 
So $\sum_{j=1}^n a_{j1}k_j = 1$ 
which means that the row $(k_1,\ldots,k_n)$ is unimodular 
and, therefore, $\al$ can be completed to a basis of $\cl$ 
by~Lemma~\ref{unimodular_lem}. 
\end{proof}

Consider a formal group algebra $\RcF$ associated to the lattice $\cl$
of the root datum
and a formal group law $F$ over $R$.

\begin{lem} \label{rootinj_lem}
For any root $\al\in \RS$ the element $x_{\al}$ is regular in
$\RcF$, except maybe if $\al$ is a
long root in an irreducible component $C_n^{\sco}$, $n\geq 1$, of the root datum. In that case, $x_\alpha$ is regular if and only if the formal series $2\cdot_F x$ is regular in $R\lbr x\rbr$, which always holds if $2$ is regular. 
\end{lem}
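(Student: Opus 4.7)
The plan is to reduce the question to the regularity of a single variable (or of $2\cdot_F x$) in a one-variable formal power series ring, by a combined use of Lemmas~\ref{regularirred_lem} and~\ref{completebasis_lem}. First, since the root datum decomposes as a direct sum of irreducible ones and $\al$ sits in a single component, Lemma~\ref{regularirred_lem} allows one to replace $\cl$ by the direct summand containing that component without changing regularity; we may therefore assume the root datum is irreducible.

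If $\al$ is not a long root in a $C_n^{\sco}$ component, Lemma~\ref{completebasis_lem} produces a $\Z$-basis of $\cl$ of the form $\{\al,\la_2,\ldots,\la_n\}$. Then $\Z\al$ is a direct summand of $\cl$ and a second application of Lemma~\ref{regularirred_lem} reduces the question to the regularity of $x_\al$ in $\FGR{R}{\Z\al}{F}\simeq R\lbr x_\al\rbr$, which is the standard fact that a variable is regular in a formal power series ring over any commutative ring. For the remaining case of a long root in an irreducible component of type $C_n^{\sco}$, a direct inspection of the standard realization of $C_n$ shows that $\al=\al_n = 2e_n$, where $\{e_1,\ldots,e_n\}$ is a $\Z$-basis of $\wl=\cl$; the key point is that, although $\al$ itself is not primitive in $\wl$ and hence cannot be completed to a $\Z$-basis, it is twice the primitive element $\beta=e_n$, which can. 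Thus $\Z\beta$ is a direct summand of $\cl$, and Lemma~\ref{regularirred_lem} shows that $x_\al = 2\cdot_F x_\beta$ is regular in $\RcF$ if and only if $2\cdot_F x$ is regular in $R\lbr x\rbr$.

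Finally, to justify the last assertion, I would write $2\cdot_F x = F(x,x) = 2x + a_{11}x^2 + \cdots = x\cdot u(x)$, where $u(x)\in R\lbr x\rbr$ has constant term $2$. A straightforward induction on the coefficients of an alleged annihilator shows that $u(x)$ is regular in $R\lbr x\rbr$ as soon as $2$ is regular in $R$, and since $x$ is always regular, the product $2\cdot_F x$ is regular as well. The main obstacle is the long-root case in $C_n^{\sco}$: once the divisibility $\al_n = 2e_n$ in $\wl$ is identified, the non-triviality of the statement (and the appearance of the ``$2$ is regular'' hypothesis) is explained, and the proof collapses to a one-variable verification.
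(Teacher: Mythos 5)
Your argument is correct and follows essentially the same route as the paper: complete $\al$ (or, for a long root of $C_n^{\sco}$, the primitive vector $\beta$ with $\al=2\beta$) to a basis of $\cl$ via Lemma~\ref{completebasis_lem}, reduce to a single power-series variable, and observe that $2\cdot_F x=x(2+\cdots)$ is regular when $2$ is. The only cosmetic differences are that you route the one-variable reduction through Lemma~\ref{regularirred_lem} and identify $\beta$ from the standard realization of $C_n$, where the paper works directly in $R\lbr x_1,\ldots,x_n\rbr$, exhibits $\al$ as $2\omega$ or $2(\omega-\omega')$ in terms of fundamental weights, and concludes with Lemma~\ref{regulardegree1_cor}.
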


\begin{proof}If $\al$ is not a long root in $C_n^{\sco}$, $n\ge 1$, 
then by Lemma~\ref{completebasis_lem}, 
it can be completed to a basis of $\cl$, and 
$\RcF\cong R\lbr x_1,...,x_n\rbr$ with $x_1=x_\al$, so $x_\al$ is
regular  
by~Lemma~\ref{regulardegree1_cor}. 
On the other hand, if $\al$ is a long root in $C_n^{\sco}$, then it is part of a basis of roots, with associated fundamental weights part of a basis of $\cl$. 
If $n=1$, then $\al=2\omega$ where $\omega$ is the weight corresponding to $\alpha$. If $n\geq 2$, then $\al=2(\omega-\omega')$ where $\omega'$ is another weight. In both cases, we can therefore find an isomorphism $\RcF \simeq R\lbr x_1,\ldots,x_n\rbr$ such that $x_\alpha$ is mapped to $2\cdot_F x_1$, which is regular if and only if it is already regular in $R\lbr x_1\rbr$. If $2$ is regular, then $2\cdot_F x_1=2x_1 +x_1^2 y$ is regular by part~\eqref{onereg_item} of Lemma~\ref{regulardegree1_cor}.
\end{proof}

\begin{rem} For some formal group laws $x_\al$ can indeed be a regular element
even if $2$ is a zero divisor and $\al$ is a long root in $C_n^{\sco}$.
Take the multiplicative formal group law $F(u,v)=u+v-uv$ over
$R=\Z/2$ and the root datum $C_1^{\sco}$.
Then $x_\al=x_{2\omega}=2x_\omega +x_{\omega}^2=x_{\omega}^2$
is regular. 
\end{rem}

\begin{lem}
Let $\cl \subset \cl'$ be two lattices between $\rl$ and $\wl$. 
If the determinant of the Cartan matrix is regular in $R$, 
then the natural map $\RcF \to \FGR{R}{\cl'}{F}$ is injective. 
\end{lem}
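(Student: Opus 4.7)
The plan is to reduce this to Lemma~\ref{regularinject_lem}, which says that regularity of $|\cl'/\cl|$ in $R$ suffices for injectivity of $\RcF \to \FGR{R}{\cl'}{F}$. So the task is to show that the hypothesis on the Cartan determinant forces $|\cl'/\cl|$ to be regular in $R$.

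First I would recall that the determinant of the Cartan matrix equals the index $|\wl/\rl|$ (this is the ``indice de connexion'' referenced just before Lemma~\ref{completebasis_lem}). For a general semisimple root datum decomposing into irreducible components, both sides multiply over the components and the statement still holds. Now since $\rl \subseteq \cl \subseteq \cl' \subseteq \wl$, the third isomorphism theorem for abelian groups gives
\[
|\wl/\rl| \;=\; |\wl/\cl'|\cdot |\cl'/\cl|\cdot |\cl/\rl|,
\]
so in particular the integer $m := |\cl'/\cl|$ divides $d := |\wl/\rl| = \det(\text{Cartan})$.

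Next I would verify the elementary fact that if $d$ is regular in $R$ and $m$ divides $d$ as integers, then $m$ is regular in $R$: writing $d = mk$, any $r\in R$ with $mr=0$ satisfies $dr = k(mr) = 0$, whence $r=0$ by regularity of $d$. Hence $m = |\cl'/\cl|$ is regular in $R$.

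Finally, I would invoke Lemma~\ref{regularinject_lem} with this $m$ to conclude that $\RcF \to \FGR{R}{\cl'}{F}$ is injective. No step is particularly delicate here; the only thing to be slightly careful about is remembering to handle the non-irreducible case when identifying $\det(\text{Cartan})$ with $|\wl/\rl|$, but this follows immediately from the block-diagonal structure of the Cartan matrix and the fact that both $\wl$ and $\rl$ split as direct sums over irreducible components.
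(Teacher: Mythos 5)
Your proposal is correct and follows essentially the same route as the paper: the paper also reduces to Lemma~\ref{regularinject_lem} by observing that the determinant of the base-change matrix (which is $\pm|\cl'/\cl|$ by Lemma~\ref{detmatrix_lem}) divides the Cartan determinant via the chain $\rl \subseteq \cl \subseteq \cl' \subseteq \wl$. Your explicit check that a divisor of a regular element is regular is left implicit in the paper but is exactly the intended step.
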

 
\begin{proof}
Choose bases of $\cl$ and $\cl'$, and let $A$ be the matrix 
expressing the first basis in terms of the second. 
Then the determinant of $A$ divides the determinant of the Cartan matrix 
by composition of inclusions $\rl \subseteq \cl\subseteq
\cl'\subseteq \wl$. 
The lemma then follows from Lemma~\ref{regularinject_lem}.
\end{proof}

\begin{rem} 
Here is a counter-example to the injectivity 
when the determinant of the Cartan matrix is not regular. 
For the type $A_2$, take $R=\Z/3$ and the additive formal group law. 
The map from $\FGR{R}{\rl}{F} \to \FGR{R}{\wl}{F}$ is not injective: 
if $\{\al_1,\al_2\}$ are simple roots, then 
$x_{\al_1+2\al_2} = x_{\al_1}+2 x_{\al_2} \neq 0$ 
is sent to $x_{3 \omega_2}=3x_{\omega_2}=0$.
\end{rem}

\section{Localized formal group algebras}\label{sec:localized}

Consider a (semisimple) root datum $\RS\hookrightarrow \cl^\vee$.
Various operators on the formal group algebra $\RcF$, such as formal Demazure operators, are
defined using formulas involving formal division by elements
$x_\al$, $\al\in \RS$, see for example
\cite[Def. 3.5]{CPZ} in the case $\cl=\wl$. In the present section we study properties of the
localization of $\RcF$ at such elements. A key result here is
Lemma~\ref{intersection_lem} which will be used in the proof of 
Proposition~\ref{DFgenmodule_prop}.

\medskip

Let $\Qc$ denote the localization of $\RcF$ at the multiplicative
subset generated by 
elements $x_\al$ for all $\al\in \RS$ (if the multiplicative subset
contains zero, then $\Qc$ is trivial).

\begin{rem} Observe that by \cite[Def.~3.12]{CPZ} 
$x_{-\al}=x_\al(-1+x_{-\al}\kp_\al)$ for some
$\kp_\al \in \RcF$. Therefore, we
can replace $\RS$ by $\RS_+$ or $\RS_-$, since
$-1+x_{-\al}\kp_\al$ is invertible in $\RcF$. 
\end{rem}

\begin{lem} \label{localizationinj_lem}
If $2$ is regular in $R$ or the root datum doesn't have an irreducible
component $C_n^{sc}$, $n\ge 1$, then 
the localization map  $\RcF \to \Qc$ is injective.
\end{lem}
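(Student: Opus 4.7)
The plan is to reduce the injectivity of the localization map to the regularity of each individual element $x_\alpha$ for $\alpha\in\RS$. Recall that the kernel of a localization $A\to S^{-1}A$ at a multiplicative subset $S$ is exactly
\[
\{a\in A \mid sa=0 \text{ for some } s\in S\},
\]
so this kernel vanishes as soon as every element of $S$ is regular (a non-zero-divisor) in $A$. Since the set of non-zero-divisors of any ring is multiplicatively closed (if $abc=0$ with $a,b$ regular then $bc=0$ and hence $c=0$), it suffices to check that each generator $x_\alpha$ is regular in $\RcF$.

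Now I would invoke Lemma~\ref{rootinj_lem}. If the root datum has no irreducible component of type $C_n^{\sco}$, $n\ge 1$, then no root is ``long in a $C_n^{\sco}$ component'', so the first clause of Lemma~\ref{rootinj_lem} gives regularity of $x_\alpha$ for every $\alpha\in\RS$. If instead $2$ is regular in $R$, then the second clause of Lemma~\ref{rootinj_lem} handles precisely the exceptional long roots, because the formal series $2\cdot_F x\in R\lbr x\rbr$ is regular whenever $2$ is regular in $R$; for all other roots the first clause still applies. In either case of the disjunction, every $x_\alpha$ is regular.

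Combining these two observations finishes the argument: the multiplicative subset generated by $\{x_\alpha\}_{\alpha\in\RS}$ consists of products of regular elements, hence of regular elements, so its elements cannot annihilate any nonzero element of $\RcF$, and the localization map $\RcF\to\Qc$ is injective. In particular $0$ does not lie in the multiplicative set (so the localization is nontrivial), which also addresses the parenthetical caveat in the definition of $\Qc$.

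There is essentially no obstacle beyond correctly splitting into the two cases of the hypothesis and quoting Lemma~\ref{rootinj_lem}; the only mild subtlety is remembering to use the multiplicative closure of the set of regular elements rather than trying to verify regularity directly for arbitrary products $\prod_i x_{\alpha_i}$.
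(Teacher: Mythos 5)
Your proposal is correct and follows the same route as the paper: the paper's proof likewise consists of invoking Lemma~\ref{rootinj_lem} to conclude that each $x_\al$ is regular under either hypothesis, so that one is localizing at a set of regular elements and the localization map is injective. The extra details you supply (the kernel description of a localization and the multiplicative closure of the set of regular elements) are exactly the standard facts the paper leaves implicit.
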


\begin{proof}
By Lemma~\ref{rootinj_lem}, the element $x_\al$ is regular for any
root $\al$, so we are localizing at a set consisting of regular elements.
\end{proof}
 
Let $r$ be a regular element of $R$. Then under the assumption of Lemma~\ref{localizationinj_lem}, there is a commutative diagram of inclusions
\begin{equation} \label{cartesian_diag}
\begin{split}
\xymatrix{
\RcF \ar@{}[d]|{\rotatebox{270}{$\textstyle \subseteq$}} \ar@{}[r]|-{\textstyle \subseteq} & \RcF\lr \ar@{}[d]|{\rotatebox{270}{$\textstyle \subseteq$}} \ar@{}[r]|-{\textstyle \subseteq} & \FGR{R\lr}{\cl}{F} \ar@{}[d]|{\rotatebox{270}{$\textstyle \subseteq$}} \\
\Qc \ar@{}[r]|-{\textstyle \subseteq} & \Qc\lr \ar@{}[r]|-{\textstyle \subseteq} & Q_{R\lr,\cl}
}
\end{split}
\end{equation}
This can be seen by choosing a basis of $\cl$ and, hence,
identifying the respective formal group algebra with a ring of power series.
Observe that if $r$ is non-invertible, an element $\sum_{i\ge 0} \frac{x^i}{r^i}$ while being in
$R\lr\lbr x \rbr$ is not in $R\lbr x \rbr
\lr$, so the inclusions $\RcF \subsetneq
\RcF\lr \subsetneq \FGR{R\lr}{\cl}{F}$ are proper.

\begin{lem} \label{goingup_lem}
Let $r$ be a regular element of $R$
and let $\al$ be a root.
Assume that $2$ is invertible in $R$ or
$\al$ is not a long root of an irreducible component
$C_n^{\sco}$, $n\geq 1$.
If $u \in \FGR{R\lr}{\cl}{F}$ and $x_\al u \in \RcF\lr$, 
then $u \in \RcF\lr$. 
\end{lem}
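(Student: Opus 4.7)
The plan is to reduce the assertion, by a careful choice of basis of $\cl$, to a purely formal statement about bounded denominators in a power series ring.

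First, I would fix an isomorphism $\RcF \simeq R\lbr x_1,\ldots,x_n\rbr$ in which $x_\al = x_1 \cdot v$ for a unit $v \in \RcF$. If $\al$ is not a long root of an irreducible component $C_n^{\sco}$, then by Lemma~\ref{completebasis_lem} we may complete $\al$ to a $\Z$-basis of $\cl$, and take $v = 1$ (so that $x_\al$ is literally identified with the variable $x_1$). If instead $\al$ is such a long root, then by hypothesis $2$ is invertible in $R$; following the argument in the proof of Lemma~\ref{rootinj_lem}, we choose a basis of $\cl$ built from the fundamental weight $\omega$ (and a second weight $\omega'$ if $n \ge 2$), under which $x_\al$ is identified with $2 \cdot_F x_1 = 2 x_1 + x_1^2 g = x_1(2 + x_1 g)$, so that $v = 2 + x_1 g$ is a unit of $\RcF$ (since $2$ is invertible). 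In either case $v$ remains a unit in $\RcF\lr$ and in $\FGR{R\lr}{\cl}{F} \simeq R\lr\lbr x_1,\ldots,x_n\rbr$, because its constant term is a unit of $R\lr$.

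Dividing the hypothesis $x_\al u \in \RcF\lr$ by $v$, I am reduced to showing: if $u \in R\lr\lbr x_1,\ldots,x_n\rbr$ and $x_1 u \in R\lbr x_1,\ldots,x_n\rbr\lr$, then $u \in R\lbr x_1,\ldots,x_n\rbr\lr$. This is a straightforward coefficient comparison. Writing $u = \sum_I c_I x^I$ with $c_I \in R\lr$, membership of $x_1 u$ in $R\lbr x_1,\ldots,x_n\rbr\lr$ exactly means that there is an integer $N \ge 0$ such that every coefficient of $x_1 u$ lies in $\tfrac{1}{r^N}R$; but these coefficients are precisely the $c_{I-e_1}$ for $i_1 \ge 1$, so every $c_I$ lies in $\tfrac{1}{r^N}R$, giving $u \in R\lbr x_1,\ldots,x_n\rbr\lr$.

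The only subtle point is producing the factorization $x_\al = x_1 \cdot v$ in the exceptional long-root case of $C_n^{\sco}$; this is where the hypothesis that $2$ be invertible is genuinely needed. Once this identification is in place, the rest is a routine shift of coefficients in a formal power series ring.
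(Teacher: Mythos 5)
Your proof is correct and follows essentially the same route as the paper: complete $\al$ to a basis, identify $\RcF$ with a power series ring, and reduce to the elementary coefficient-shift statement for $x_1$. The only (cosmetic) difference is in the long-root $C_n^{\sco}$ case, where the paper modifies the coordinate isomorphism so that $x_\al$ maps exactly to $x_1$, while you instead factor $x_\al = x_1\cdot v$ with $v$ a unit (both steps using invertibility of $2$ in the same way); you also spell out the coefficient comparison that the paper leaves implicit.
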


\begin{proof}
By Lemma~\ref{completebasis_lem} we complete $\al$ to a basis of
$\cl$. 
Observe that if $\al$ is a long root of $C_n^{\sco}$, then 
$x_\al\in 2x_\omega+\IF^2$. Therefore, the isomorphism 
$\RcF\simeq R\lbr x_1,\ldots,x_n\rbr$ determined by the choice of a basis 
can be modified so that it sends $x_\al$ to $x_1$.

The lemma then follows from the fact that 
if $u \in R\lr\lbr x_1,\ldots,x_n\rbr$ and $x_1 u \in R \lbr x_1,\ldots,x_n\rbr\lr$, then $u \in R\lbr x_1,\ldots, x_n\rbr\lr$.
\end{proof}

\begin{cor}\label{localintersection_cor} 
Let $r$ be a regular element of $R$.
Assume that $2$ is invertible in $R$ or the root datum
doesn't have an irreducible component $C_n^{\sco}$, $n\ge 1$.
Then we have
$(\FGR{R\lr}{\cl}{F})\cap \Qc\lr =
  \RcF\lr$  in $Q_{R\lr,\cl}$, i.e. the right
  square of \eqref{cartesian_diag} is cartesian.
\end{cor}

\begin{lem}\label{intersection_lem}
Let $r$ be a regular element of $R$.
Assume that $2$ is invertible in $R$ or the root datum
doesn't have an irreducible component $C_n^{\sco}$, $n\ge 1$.
Then we have
 $(\RcF\lr)\cap \Qc = \RcF$ in $\Qc\lr$, i.e. the left
 square of \eqref{cartesian_diag} is cartesian. 
\end{lem}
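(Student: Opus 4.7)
The inclusion $\RcF \subseteq \RcF\lr \cap \Qc$ (with intersection taken inside $\Qc\lr$) is immediate from diagram~\eqref{cartesian_diag}; the content of the lemma is the reverse inclusion. My plan is to first reduce the problem to a claim involving a single denominator factor $x_\al$, and then to handle that case by a divisibility argument modulo $r^k$ in the power series presentation of $\RcF$.

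For the reduction step, take any $u \in \RcF\lr \cap \Qc$ and, since $u \in \Qc$, express $u = v/(x_{\al_1} \cdots x_{\al_m})$ with $v \in \RcF$ and roots $\al_i \in \RS$. I induct on $m$. The base case $m=0$ is trivial. For $m \geq 1$, set $u_1 := x_{\al_1} u$. Then $u_1 \in \RcF\lr$ because $x_{\al_1} \in \RcF$ and $u \in \RcF\lr$, while $u_1 = v/(x_{\al_2} \cdots x_{\al_m}) \in \Qc$ has only $m-1$ denominator factors; by the inductive hypothesis, $u_1 \in \RcF$. This reduces the lemma to the following key claim: if $u \in \RcF\lr$ and $x_\al u \in \RcF$ for some root $\al \in \RS$, then $u \in \RcF$.

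To prove the claim, write $u = f/r^k$ with $f \in \RcF$ and $k \geq 0$, and set $g := x_\al u \in \RcF$, so that $x_\al f = r^k g$ is an equality in $\RcF$ (both sides lie there, and $\RcF \hookrightarrow \RcF\lr$ since $r$ is regular). Exactly as in the proof of Lemma~\ref{goingup_lem}, the hypothesis that $2$ is invertible or that there is no $C_n^{\sco}$ component allows me to fix an isomorphism $\RcF \simeq R\lbr x_1, \ldots, x_n \rbr$ under which $x_\al$ corresponds to $x_1$. Passing to $\RcF/r^k\RcF \simeq (R/r^k)\lbr x_1, \ldots, x_n \rbr$, the relation $x_1 f = r^k g$ becomes $x_1 \bar{f} = 0$; since multiplication by the variable $x_1$ is injective in any formal power series ring (regardless of the coefficient ring), I conclude $\bar{f} = 0$, i.e.\ $f \in r^k \RcF$. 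Writing $f = r^k f''$ gives $u = f'' \in \RcF$.

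The only genuine subtlety is the exceptional $C_n^{\sco}$ case when $2$ is a zero divisor: for a long root $\al$ we have $x_\al = 2\cdot_F x_\omega$, which need not be a power series variable in $\RcF$. This is precisely why the hypothesis of the lemma is needed, and the required change of variables (possible exactly when $2$ is invertible or $\al$ avoids that case) is the one already used in Lemma~\ref{goingup_lem}, so no new trick is needed here. The remainder of the argument is an elementary divisibility statement inside a formal power series ring.
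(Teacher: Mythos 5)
Your proof is correct and is essentially the paper's own argument: both clear denominators by cross-multiplying and then reduce modulo $r^k$, concluding because the elements $x_\al$ remain regular in $\FGR{(R/r^k)}{\cl}{F}$ under the stated hypotheses. The paper handles all denominator factors at once by citing Lemma~\ref{rootinj_lem} over the ring $R/r^k$, whereas you induct on the number of factors and re-derive the regularity via the explicit change of variables from Lemma~\ref{goingup_lem}; this is only a repackaging of the same mechanism.
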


\begin{proof} 
Since the localization map is injective by
Lemma~\ref{localizationinj_lem}, it is enough to show that if $u,v \in \RcF$ satisfy
$\frac{v}{\prod_\al x_\al} = \frac{u}{r^n}$, $\al\in\RS$, or equivalently, $r^n v =
u\prod_\al x_\al$ in $\RcF$, then $r^n$ divides $u$. 
In $\FGR{(R/r^n)}{\cl}{F}=\RcF/r^n$, we have $r^n v = 0$, and since by Lemma~\ref{rootinj_lem} the element $x_\al$ is regular in $\FGR{(R/r^n)}{\cl}{F}$ for any $\alpha \in \RS$, we also have $u=0$. Thus $u$ is divisible by $r^n$.
\end{proof}

\section{Formal Demazure operators} \label{FormalDemazureOperators_sec}

We now introduce formal Demazure
operators following the approach 
mentioned in \cite[Rem.~3.6]{CPZ}. The formula \eqref{productform_eq}
will be extensively used in section~\ref{sec:copraffine} in various computations involving coproducts.

\medskip

Consider a root datum $\RS\hookrightarrow \cl^\vee$. For each $\al \in \RS$
we define a $\Z$-linear automorphism of $\cl$ called a {\em reflection} by
\[
s_\al\colon \la \mapsto \la -
\al^\vee(\la)\al,\quad \la\in \cl.
\]
The subgroup of linear automorphisms of $\cl$ generated 
by reflections $s_\al$, $\al\in \RS$,
is called the {\em Weyl group} of the root datum and is denoted by $W$.
Observe that $W$ depends only on the
Dynkin type.

Fix a formal group law $F$ over $R$.  There is an induced action of
the Weyl group $W$ on $\RcF$ via $R$-algebra automorphisms defined by
$w(x_\la) = x_{w(\la)}$ for all $w \in W$ and $\la \in
\cl$.

\begin{lem} If $x_\al$ is regular in $\RcF$, then the difference 
$u-s_\al(u)$ is uniquely divisible by $x_\al$ for each $u\in
\RcF$
and $\al\in \RS$.
\end{lem}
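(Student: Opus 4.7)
The plan is to first dispose of uniqueness, which is automatic from regularity, and then to establish existence by a density argument: verify divisibility on generators, propagate it to the polynomial subalgebra via a product rule, and finally extend to the completion by a formal Taylor expansion.

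For uniqueness, if $x_\al v = x_\al v'$ in $\RcF$, regularity of $x_\al$ forces $v = v'$, so any element of $\RcF$ has at most one quotient by $x_\al$.

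For existence, consider the subset $J := \{u \in \RcF : u - s_\al(u) \in x_\al \RcF\}$. I would first show $x_\la \in J$ for each $\la \in \cl$ by a direct formal group computation:
\[
s_\al(x_\la) - x_\la = x_{\la - \al^\vee(\la)\al} - x_\la = \bigl(x_\la +_F ((-\al^\vee(\la)) \cdot_F x_\al)\bigr) - x_\la.
\]
Since $k \cdot_F x_\al$ (for any integer $k$) is a power series in $x_\al$ with zero constant term, it lies in $x_\al \RcF$; and because $F(u,v) - u$ lies in the ideal generated by $v$, adding any $y \in x_\al \RcF$ to $x_\la$ via $+_F$ changes $x_\la$ by an element of $x_\al \RcF$. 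Hence $x_\la \in J$. Next, $J$ is an $R$-submodule by inspection, and closed under multiplication via the Leibniz-type identity
\[
uv - s_\al(u)s_\al(v) = (u - s_\al(u))\,v + s_\al(u)\,(v - s_\al(v)),
\]
so $J$ contains the $R$-subalgebra generated by the $x_\la$.

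The main obstacle is extending from this subalgebra to the $\IF$-adic completion $\RcF$. I would handle this by choosing a basis $\{\la_1, \ldots, \la_n\}$ of $\cl$, using the identification $\RcF \simeq R\lbr y_1, \ldots, y_n\rbr$ with $y_i = x_{\la_i}$, and writing $s_\al(y_i) = y_i + x_\al v_i$ for some $v_i \in \RcF$ (by the computation above). For $u = f(y_1, \ldots, y_n)$ an arbitrary element of $\RcF$, continuity of the $R$-algebra automorphism $s_\al$ gives $s_\al(u) = f(y_1 + x_\al v_1, \ldots, y_n + x_\al v_n)$. A formal Taylor-type expansion then yields
\[
s_\al(u) - u = \sum_{i=1}^n x_\al v_i\, g_i(y, x_\al v),
\]
for suitable $g_i \in \RcF$, the series converging in the $\IF$-adic topology because each monomial contribution of multi-degree $|I|$ in the $y$'s has $\IF$-adic valuation tending to infinity. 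Factoring out $x_\al$ shows $u \in J$, completing the proof.
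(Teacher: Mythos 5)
Your proposal is correct. The paper itself gives essentially no argument: it disposes of uniqueness by regularity of $x_\al$ (exactly as you do) and delegates divisibility to the proof of \cite[Cor.~3.4]{CPZ}. What you have written is, in effect, a self-contained reconstruction of that cited argument: the computation $x_{s_\al(\la)} = x_\la +_F \bigl((-\al^\vee(\la))\cdot_F x_\al\bigr)$ showing divisibility on the generators is precisely the formal-group identity underlying the CPZ statement, and the extension from generators to all of $\RcF$ via the Leibniz identity and a Taylor expansion in $R\lbr y_1,\ldots,y_n\rbr$ is the standard way to pass to the completion. All the steps you need are legitimate here: $k\cdot_F x_\al$ and $-_F x_\al$ are power series in $x_\al$ without constant term; $F(u,v)-u$ lies in $(v)$; $s_\al$ is a continuous $R$-algebra automorphism, so $s_\al(f(y_1,\ldots,y_n))=f(s_\al(y_1),\ldots,s_\al(y_n))$; and $f(y+h)-f(y)\in(h_1,\ldots,h_n)$ with the substitution $h_i\mapsto x_\al v_i$ converging because $x_\al v_i\in\IF$ and $\RcF$ is complete. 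The only difference from the paper is presentational: you prove what the paper cites, which makes your version longer but independent of \cite{CPZ}.
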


\begin{proof} Divisibility follows from the proof of \cite[Cor.~3.4]{CPZ}. Uniqueness follows from the regularity of $x_\alpha$.
\end{proof}

\begin{defi}\label{formaldemazureop_defi}
For each $\al\in\RS$ such that $x_\al$ is regular in $\RcF$
we define an $R$-linear operator 
$\De_\al$ on $\RcF$, called a {\em formal Demazure operator},
\[
    \De_\al(u ):=\tfrac{u -s_\al(u) }{x_\al},\quad u \in \RcF.
\]
\end{defi}

\begin{rem} Observe that in \cite[Def.~3.5]{CPZ} we defined the
  Demazure
operator on $\RcF$ for $\cl=\wl$ by specializing the Demazure operator
defined over the Lazard ring. If $x_\al$ is regular, then
this definition coincides with Definition~\ref{formaldemazureop_defi}.
\end{rem}

\begin{defi}\label{asmreg_defi}
We say that $\RcF$ is $\RS$-\emph{regular} 
if for each $\al\in \RS$, the element $x_\al$ is regular in $\RcF$.
\end{defi}

Unless otherwise stated, we shall always assume that the formal group
algebra $\RcF$ is $\RS$-regular. Observe that this immediately implies
that $\RcF$ injects into the localization $\Qc$ (cf.~Lemma~\ref{localizationinj_lem}).

\begin{rem}\label{rem2reg_rem}
Note that Lemma~\ref{rootinj_lem} implies that, $\RcF$ is $\RS$-regular if $2$ is regular in $R$ or if the root datum doesn't contain an irreducible component $C_n^{\sco}$.
\end{rem}

We fix a set of simple roots $\{\al_1,\ldots,\al_n\}$. 
Let $s_i=s_{\al_i}$, $i\in [n]=\{1,\ldots,n\}$ denote the corresponding (simple)
reflections. One of the basic facts concerning the Weyl group $W$ is
that it is generated by simple reflections.
Given a sequence $I=(i_1,...,i_l)$ of length $|I|=l$ with $i_j\in [n]$, we
set $w(I)=s_{i_1}\ldots s_{i_l}$ to be the product of the respective
simple reflections. 
A sequence $I$ is called a {\em reduced sequence} of $w\in W$ if $I$ is of minimal length among all sequences $J$ such that $w=w(J)$. 
The length of $I_w$
is called the length of $w$ and is denoted by $\ell(w)$.
Given a sequence $I$ we define
\[
  \De_{I}=\De_{i_1}\circ...\circ \De_{i_l},
  \quad\text{ where }\De_i:=\De_{\al_i}.
\]
 
\begin{rem}
If $F$ is the additive or multiplicative formal group law,
then $\De_{I_w}$ doesn't depend 
on a choice of the reduced sequence of $w$. 
In this case, it coincides with the classical Demazure operator $\De_w$ of
\cite[\S3 and~\S9]{Dem73}.
For other formal group laws
$\De_{I_w}$ depends on a choice of $I_w$ 
(see \cite[Thm.~3.9]{CPZ}).
\end{rem}

\begin{defi}
We define $R$-linear operators $B_i^{(j)}\colon\RcF\to \RcF$,
where $j\in\{-1, 0,1\}$ and $i\in[n]$, as
\[
  B_i^{(-1)}:=\De_i, \quad B_i^{(0)}:=s_i, \quad\text{and}\quad
  B_i^{(1)}:=\text{multiplication by }(-x_i):=-x_{\al_i}.
\]
\end{defi}
Observe that we have $B^{(j)}_i(\IF^m)\subset \IF^{m+j}$, where $\IF$
is the augmentation ideal. 

\medskip

Let $I=(i_1,\ldots,i_l)$ be a sequence of length $l$, and let $E$ be a
subset of $[l]$. We denote by $I_{|E}$ the subsequence of $I$
consisting of all $i_j$'s with $j\in E$.

\begin{lem}\label{productform_lem} Let $I=(i_1,\ldots, i_l)$. Then for any $u,v\in \RcF$ we have
\begin{equation} \label{productform_eq}
\De_I(uv)=\sum_{E_1,E_2\subset [l]} p_{E_1,E_2}^I
\De_{I_{|E_1}}(u)\De_{I_{|E_2}}(v)
\end{equation}
where $p_{E_1,E_2}^I=B_1\circ...\circ B_l (1)\in \IF^{|E_1|+|E_2|-l}$
with the operator $B_j\colon \RcF\to \RcF$ defined as 
\[
B_j=
\begin{cases}
B_{i_j}^{(1)}\circ B_{i_j}^{(0)}, & \text{if $j\in E_1\cap E_2$,} \\
B_{i_j}^{(-1)}, & \text{if $j\notin E_1\cup E_2$,} \\
B_{i_j}^{(0)},  & \text{otherwise.}
\end{cases}
\]
\end{lem}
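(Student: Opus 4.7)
The plan is an induction on $l=|I|$. The base case $l=1$ is a \emph{twisted three-term Leibniz rule}
\[
\De_i(uv) = \De_i(u)\,v + u\,\De_i(v) - x_i\,\De_i(u)\,\De_i(v),
\]
which follows from the standard two-term identity $\De_i(uv)=\De_i(u)\,v+s_i(u)\De_i(v)$ by substituting the tautology $s_i(u)=u-x_i\De_i(u)$. Checking against the $l=1$ instance of the lemma, the pair $(E_1,E_2)=(\emptyset,\emptyset)$ contributes nothing since $\De_{i_1}(1)=0$, while the three remaining pairs yield coefficients $s_{i_1}(1)=1$, $s_{i_1}(1)=1$, and $-x_{i_1}\cdot s_{i_1}(1)=-x_{i_1}$, matching the three terms above.

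For the inductive step I would write $I=(i_1,I')$ with $I'=(i_2,\dots,i_l)$, apply the induction hypothesis to $\De_{I'}(uv)$, and then push $\De_{i_1}$ through each summand $p\cdot a\cdot b$, where $p=p^{I'}_{E_1\setminus\{1\},E_2\setminus\{1\}}$, $a=\De_{I'_{|E_1\setminus\{1\}}}(u)$, and $b=\De_{I'_{|E_2\setminus\{1\}}}(v)$. Iterating the two-term Leibniz rule on this three-factor product and then substituting $s_{i_1}(a)=a-x_{i_1}\De_{i_1}(a)$ in the trailing term gives exactly the four terms
\[
\De_{i_1}(p)\,a\,b \;+\; s_{i_1}(p)\,\De_{i_1}(a)\,b \;+\; s_{i_1}(p)\,a\,\De_{i_1}(b) \;-\; x_{i_1}\,s_{i_1}(p)\,\De_{i_1}(a)\,\De_{i_1}(b),
\]
indexed by whether the new position $1$ is added to neither of $E_1,E_2$, to $E_1$ only, to $E_2$ only, or to both. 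Reading off the coefficients in these four cases recovers the operators $B_{i_1}^{(-1)}$, $B_{i_1}^{(0)}$, $B_{i_1}^{(0)}$, and $B_{i_1}^{(1)}\!\circ\!B_{i_1}^{(0)}$ applied to $p$, so by induction the coefficient of the term indexed by $(E_1,E_2)$ is $p^I_{E_1,E_2}=B_1\circ B_2\circ\cdots\circ B_l(1)$.

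The $\IF$-adic bound is a degree count: $B_{i_j}^{(-1)}$, $B_{i_j}^{(0)}$, and $B_{i_j}^{(1)}\!\circ\!B_{i_j}^{(0)}$ shift $\IF$-adic degree by $-1$, $0$, and $+1$, respectively. Summing over $j$ and using $|E_1\cap E_2|+|E_1\cup E_2|=|E_1|+|E_2|$, the net shift equals
\[
|E_1\cap E_2|\cdot 1 + (l-|E_1\cup E_2|)\cdot(-1) = |E_1|+|E_2|-l,
\]
and the claim follows since $1\in\IF^0$. The only point requiring care is the direction of composition: since $\De_{i_l}$ is the innermost operator in $\De_I=\De_{i_1}\circ\cdots\circ\De_{i_l}$, the $B_j$'s accumulate from right to left, so $B_1$ is applied last, ensuring the stated order $B_1\circ\cdots\circ B_l(1)$. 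Aside from this bookkeeping, the argument is routine once the three-term rule is in hand.
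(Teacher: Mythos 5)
Your proof is correct and follows essentially the same route as the paper's: induction on $l$, peeling off $i_1$, and expanding via the two- and three-term Leibniz rules, with the four resulting terms indexed by whether position $1$ joins $E_1$, $E_2$, both, or neither. The only cosmetic differences are that you derive the three-term rule from the two-term one via $s_i(u)=u-x_i\De_i(u)$ where the paper cites \cite[Prop.~3.8.(4)]{CPZ}, and that you spell out the $\IF$-adic degree count that the paper leaves implicit in the observation $B^{(j)}_i(\IF^m)\subset \IF^{m+j}$.
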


\begin{proof} We proceed by induction on the length $l$ of $I$.
For $l=1$ and $I=(i)$ the formula holds since 
\[
\begin{split}
\De_i(uv) & =\De_i(u)v+u\De_i(v)-x_i\De_i(u)\De_i(v) \\
 & =-\De_i(1)uv +s_i(1)u\De_i(v)+s_i(1)\De_i(u)v - x_i \De_i(u)\De_i(v)
\end{split}
\]
by \cite[Prop.~3.8.(4)]{CPZ} and because $\De_i(1)=0$ and $s_i(1)=1$.

Setting $I'=(i_2,\ldots,i_l)$, we obtain by induction
\[
\De_I(uv)=\De_{i_1}(\De_{I'}(uv))=\De_{i_1}\Big(\sum_{E_1,E_2\subset [l-1]}
p_{E_1,E_2}^{I'} \De_{I'_{|E_1}}(u) \De_{I'_{|E_2}}(v)\Big).
\]
For fixed $E_1,E_2\subset [l-1]$, by \cite[Prop.~3.8.(4)]{CPZ} we have
\[
\begin{split}
\De_{i_1}\big(p_{E_1,E_2}^{I'}\De_{I'_{|E_1}}(u)\De_{I'_{|E_2}}(v)\big)= &
\De_{i_1}(p_{E_1,E_2}^{I'})\De_{I'_{|E_1}}(u)\De_{I'_{|E_2}}(v)  \\
 & +s_{i_1}(p_{E_1,E_2}^{I'})\De_{i_1}\big(\De_{I'_{|E_1}}(u)\De_{I'_{|E_2}}(v)\big), 
\end{split}
\]
and
\[
\begin{split}
\De_{i_1}\big(\De_{I'_{|E_1}}(u)\De_{I'_{|E_2}}(v)\big) = & (\De_{i_1}\circ\De_{I'_{|E_1}})(u)\De_{I'_{|E_2}}(v)+
\De_{I'_{|E_1}}(u)(\De_{i_1}\circ\De_{I'_{|E_2}})(v)\\
& -x_{i_1} (\De_{i_1}\circ\De_{I'_{|E_1}})(u) (\De_{i_1}\circ\De_{I'_{E_2}})(v).
\end{split}
\]
Combining these two identities, we obtain 
\begin{multline*}
\De_{i_1}\big(p_{E_1,E_2}^{I'}\De_{I'_{|E_1}}(u)\De_{I'_{|E_2}}(v)\big)
= B_{i_1}^{(-1)}(p^{I'}_{E_1,E_2})\cdot \De_{I'_{|E_1}}(u)\De_{I'_{|E_2}}(v)  \\
+B_{i_1}^{(0)}(p^{I'}_{E_1,E_2})\cdot [(\De_{i_1}\circ\De_{I'_{|E_1}})(u)\De_{I'_{|E_2}}(v)+\De_{I'_{|E_1}}(u)(\De_{i_1}\circ\De_{I'_{|E_2}})(v)]
\\
+(B_{i_1}^{(1)}\circ B_{i_1}^{(0)})(p^{I'}_{E_1,E_2}) \cdot (\De_{i_1}\circ\De_{I'_{|E_1}})(u) (\De_{i_1}\circ\De_{I'_{|E_2}})(v)
\end{multline*}
and using the notation $E+1=\{e+1 \mid e\in E\}$ for a set of integers
$E$ we get
\begin{multline*}
= B_{i_1}^{(-1)}(p^{I'}_{E_1,E_2})\cdot \De_{I_{|E_1+1}}(u)\De_{I_{|E_2+1}}(v)  \\
+B_{i_1}^{(0)}(p^{I'}_{E_1,E_2})\cdot [\De_{I_{|\{1\}\cup (E_1+1)}})(u)\De_{I_{E_2+1}}(v)+\De_{I_{|E_1+1}}(u)\De_{I_{|\{1\}\cup (E_2+1)}}(v)]
\\
+(B_{i_1}^{(1)}\circ B_{i_1}^{(0)})(p^{I'}_{E_1,E_2}) \cdot (\De_{I_{|\{1\}\cup (E_1+1)}})(u) \De_{I_{|\{1\}\cup (E_2+1)}}(v). 
\end{multline*}
Since a subset of $[l]$ is of the form  $E+1$ or $\{1\}\cup (E+1)$ with $E\subseteq [l-1]$, the result follows by induction. 
\end{proof}

\begin{rem}\label{vanp_rem}
For any simple root $\al$ we have $s_\al(1)=1$ and $\De_\al(1)=0$. Therefore, if there is an operator of type $B^{(-1)}$ applied to $1$ before applying all the operators of type $B^{(1)}\circ B^{(0)}$, then $p^I_{E_1,E_2}=0$. In particular,  if $E_1\cap E_2=\emptyset$ and $E_1\cup E_2\neq [l]$, then $p^I_{E_1,E_2}=0$.
\end{rem}

\section{Twisted formal group algebra and Demazure elements}\label{sec:twisted}

We recall the notions of twisted formal group algebra and formal affine Demazure algebra introduced in \cite[\S6]{HMSZ}. The purpose of this section is to generalize
\cite[Prop.~4.3]{KK} and \cite[Prop.~2.6]{kk} to the context of arbitrary formal group laws
(see Lemma~\ref{Deldecomp_lem}), hence, providing a general formula expressing a product of Demazure
elements in terms of the canonical basis of the twisted formal group algebra.

\medskip

As in section~\ref{sec:localized},
given a root datum $\RS \hookrightarrow \cl^\vee$
consider the localization $\qQ=\Qc$ of the formal group algebra $\sS=\RcF$. 
Since the Weyl group $W$
preserves the set $\RS$, its action on $\sS$ extends to $\qQ$.
Following \cite[\S4.1]{KK} and \cite[Def.~6.1]{HMSZ}, 
we define the \emph{twisted formal group algebra} 
to be the $R$-module $\QW := \qQ \ot_R R[W]$ with the
multiplication given by
\[
    (q\de_w) (q'\de_{w'}) = qw(q')\de_{ww'}\; 
\text{ for all } w,w'\in W\text{ and } q,q' \in \qQ
\]
  (extended by linearity), where $\de_w$ denotes the element in $R[W]$
  corresponding to $w$ 
(so we have $\de_w \de_{w'} = \de_{ww'}$ for $w,w' \in W$) and
$\de_1$ denotes $1$. 
Observe that $\QW$ is a free left $\qQ$-module (via the left
multiplication) 
with basis $\{\de_w\}_{w\in W}$,
but $\QW$ is not a $\qQ$-algebra
as $\de_1 \qQ=\qQ\de_1$ is not central in $\QW$.

\begin{rem} The twisted formal group algebra can be defined for any
 $W$-action on $\qQ$, where $W$ is a unital monoid and
$w(q_1\cdot q_2)=w(q_1)\cdot w(q_2)$ for any $q_1,q_2\in Q$ and
  $w\in W$.
\end{rem}

\begin{rem} In \cite[\S4.1]{KK} (resp. \cite[\S2.1]{kk}),
$\qQ$ denoted the field of fractions of a symmetric algebra (resp. of an integral group ring) of the weight lattice $\wl$ and $\QW$ was defined using the right $Q$-module notation, i.e. $\QW=\Z[W]\ot_\Z \qQ$.
Using our terminology, the case of \cite{KK} (resp. of \cite{kk}) corresponds to
the additive formal group law (resp. multiplicative periodic formal group law) and the simply connected root datum. 

In \cite{HMSZ}, $\qQ$ denoted the localization of $R\lbr \wl \rbr_F$ at all elements $x_\la$, where $\la \in \wl$.
\end{rem}
 
\begin{defi}
Following \cite[$I_{24}$]{KK} and \cite[Def.~6.2]{HMSZ}, for each 
root $\al \in \RS$ we define the \emph{Demazure element}
\[
X_\al:=x_\al^{-1}(1-\de_{s_\al})=  
x_\al^{-1}-\de_{s_\al} x_{-\al}^{-1} \in \QW.
\]
Given a set of simple
roots $\{\al_1,\ldots,\al_n\}$ for any sequence $I=(i_1,...,i_l)$ from $[n]$ we denote
\[
\de_I=\de_{s_{i_1}...s_{i_l}}\; \text{ and }\; \Del_I:=\Del_{i_1}\cdot
\ldots\cdot \Del_{i_l},\text{ where }\Del_i=\Del_{\al_i}.
\]
There is an {\em anti-involution} $(\text{-})^t$ on the $R$-algebra $\QW$ defined by
\[
 q\de_w \mapsto (q\de_w)^t:=w^{-1}(q)\de_{w^{-1}},\quad w\in W,\; q \in \qQ.
\]
Observe that $(qx)^t=x^tq$ for $x\in \QW$ and $q\in \qQ$, so it is
neither 
right $\qQ$-linear nor left $\qQ$-linear.
\end{defi}

We obtain the following generalization of \cite[Prop. 4.3]{KK}:

\begin{lem}\label{Deldecomp_lem}
Given a reduced sequence $I_v$ of $v\in W$ of length $l$ let 
\[
\Del_{I_v}=\sum_{w\in W} a_{v,w} \de_{w}=\sum_{w \in W}
\de_w a'_{v,w}\] for some $a_{v,w}, a'_{v,w} \in \qQ$. Then
\begin{enumerate}
\item \label{zerounless_item} $a_{v,w}=0$ unless $w \leq v$ with
  respect to the Bruhat order on $W$,
\item \label{extremecoeffs_item} $a_{v,v}=  (-1)^l \prod_{\al \in v
    (\RS_-) \cap \RS_+} x_{\al}^{-1}=a'_{v,v^{-1}}$,
\item \label{reverse_item} $a'_{v,w}=0$ unless $w \geq v^{-1}$.
\end{enumerate}  
\end{lem}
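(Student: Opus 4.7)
The plan is to prove all three claims by induction on $l = \ell(v)$, using the factorization $\Del_{I_v} = X_{i_1} \cdot \Del_{I'}$, where $I' = (i_2,\ldots,i_l)$ is a reduced sequence for $v' = s_{i_2}\cdots s_{i_l}$, so that $v = s_{i_1}v'$ and $\ell(v) = \ell(v')+1$. The base case $l=0$ is trivial since $\Del_{I_1} = 1 = \de_1$. The main tools will be the commutation rule $\de_{s_{i_1}}q = s_{i_1}(q)\de_{s_{i_1}}$, the lifting property of the Bruhat order, and the standard fact that $|\RS_+ \cap v(\RS_-)| = \ell(v)$ with the classical description of the inversion set in terms of a reduced word.

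For part~\eqref{zerounless_item}, starting from the inductive decomposition $\Del_{I'} = \sum_{u\leq v'} a_{v',u}\,\de_u$, I would expand
\[
\Del_{I_v} \;=\; X_{i_1}\Del_{I'} \;=\; \sum_{u\leq v'} x_{i_1}^{-1}\,a_{v',u}\,\de_u \;-\; \sum_{u\leq v'} x_{i_1}^{-1}\,s_{i_1}(a_{v',u})\,\de_{s_{i_1}u}.
\]
Since $s_{i_1}v < v$ (as $I_v$ is reduced) and $u\leq v'\leq v$, the lifting property forces $s_{i_1}u \leq v$, so every $\de$-index appearing is $\leq v$. For the first equality in~\eqref{extremecoeffs_item}, the only contribution to $\de_v$ comes from $u = v'$ in the second sum (lengths exclude $u = v$ in the first sum, and $s_{i_1}u = v$ forces $u = v'$), yielding $a_{v,v} = -x_{i_1}^{-1}\,s_{i_1}(a_{v',v'})$. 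Plugging in the inductive formula for $a_{v',v'}$ and using the set identity
\[
s_{i_1}\!\bigl(v'(\RS_-)\cap \RS_+\bigr) \;=\; \bigl(v(\RS_-)\cap \RS_+\bigr)\setminus\{\al_{i_1}\},
\]
which rests on $s_{i_1}$ permuting $\RS_+\setminus\{\al_{i_1}\}$ together with $\al_{i_1}\in v(\RS_-)$ (a consequence of $v^{-1}(\al_{i_1}) = -v'^{-1}(\al_{i_1}) \in \RS_-$, itself following from $\ell(s_{i_1}v')>\ell(v')$), produces the desired formula $a_{v,v} = (-1)^l\prod_{\al\in v(\RS_-)\cap \RS_+} x_\al^{-1}$.

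For part~\eqref{reverse_item} and the second equality in~\eqref{extremecoeffs_item}, I would run the symmetric induction from the other side, via the factorization $\Del_{I_v} = \Del_{I''}\cdot X_{i_l}$ with $I'' = (i_1,\ldots,i_{l-1})$ reduced for $v'' = v s_{i_l}$. Expanding $\Del_{I''} = \sum_u \de_u\, a'_{v'',u}$ in the right basis and using $X_{i_l} = x_{i_l}^{-1} - \de_{s_{i_l}}x_{-\al_{i_l}}^{-1}$ yields a recursion of the same shape, with the $\de$-indices now $u$ and $u s_{i_l}$; the right-hand lifting property (applied to $s_{i_l}$, using $s_{i_l}v^{-1} < v^{-1}$) forces both indices to be $\geq v^{-1}$, giving~\eqref{reverse_item}. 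The leading-term computation in the right basis is parallel to the one in the left basis and, via the symmetric inversion-set identity, delivers $a'_{v,v^{-1}} = a_{v,v}$. The main obstacle is the careful bookkeeping of the Bruhat-order constraints under both factorizations and the precise combinatorics of inversion sets; once these are set up, the leading-coefficient computation is routine.
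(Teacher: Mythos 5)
Your treatment of part~(a) and of the first equality in~(b) is essentially the paper's own argument: the same factorization $\Del_{I_v}=\Del_{\al_{i_1}}\Del_{I'}$, the same recursion $a_{v,v}=-x_{\al_{i_1}}^{-1}s_{i_1}(a_{v',v'})$, and the same inversion-set identity (the paper phrases it as $\{\al_{i_1}\}\cup s_{i_1}\bigl(v'(\RS_-)\cap\RS_+\bigr)=v(\RS_-)\cap\RS_+$ and cites Bourbaki, and uses Deodhar's subword characterization where you invoke the lifting property; these are interchangeable). That half is correct.

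For part~(c) and the second equality in~(b) you depart from the paper, and this is where the gap lies. The paper runs no second induction: it deduces these statements formally from (a) and (b) by passing between the left and right $\qQ$-bases, i.e.\ from the elementary relation $q\de_w=\de_w\,w^{-1}(q)$ (equivalently, via the anti-involution $\de_w\mapsto\de_{w^{-1}}$), which ties $a'_{v,w}$ to $a_{v,w}$ and in particular forces the two families to have the same support. Your right-handed induction cannot deliver the inequality you assert: at the inductive step the $\de$-indices produced are $u$ and $us_{i_l}$ with $u$ in the support of $a'_{v'',\,\cdot\,}$, and the extreme such $u$ sits at length $\ell\bigl((vs_{i_l})^{-1}\bigr)=\ell(v^{-1})-1$, i.e.\ strictly \emph{below} $v^{-1}$ in the Bruhat order, so no lifting property can push it up to being $\ge v^{-1}$. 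The failure is already visible at $l=1$: from $\Del_\al=\de_1\,x_\al^{-1}-\de_{s_\al}\,x_{-\al}^{-1}$ one reads off $a'_{s_\al,1}=x_\al^{-1}\neq 0$ although $1\not\ge s_\al$, and $a'_{s_\al,s_\al}=-x_{-\al}^{-1}$, which is not equal to $a_{s_\al,s_\al}=-x_\al^{-1}$; so the ``parallel leading-term computation'' you defer to cannot produce $a'_{v,v^{-1}}=a_{v,v}$ on the nose either. Any correct proof of this half of the lemma has to go through the explicit coefficient relation between $a'_{v,w}$ and $a_{v,w}$ coming from $q\de_w=\de_w\,w^{-1}(q)$, as the paper does (and the rank-one computation above shows that the inequality in (c) and the identity in (b) must be interpreted through that relation); a second induction of the shape you propose does not close.
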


\begin{proof} We proceed by induction on the length $l$ of $v$.

The lemma obviously  holds for $I_v =\emptyset$ the empty sequence,
since $X_\emptyset =1$. 

Let $I_v=(i_1,\ldots i_l)$ be a reduced sequence of $v$ and let $\beta=\al_{i_1}$. Then
$(i_2,\ldots,i_n)$ is a reduced sequence of $v'=s_\beta v$ and we have
\begin{enumerate}[\indent(1)]
\item \label{stableBruhat_item} $w \leq v'$ implies $w \leq v$ and $s_\beta w \leq v$;
\item \label{negroots_item} $\{\beta\} \cup s_\beta\big( v'(\RS_-)\cap \RS_+\big) = v(\RS_-) \cap \RS_+$;
\item \label{inverseBruhat_item} $w^{-1} \leq v$ if and only $w \geq v^{-1}$.
\end{enumerate}
Indeed, the properties~\eqref{stableBruhat_item} and \eqref{inverseBruhat_item}
are consequences of the fact that elements smaller than $v$ are the
elements $w$ obtained by taking a subsequence of a reduced sequence of
$v$, by \cite[Th. 1.1, III, (ii)]{Deo77}. Property
\eqref{negroots_item} follows from \cite[Ch. VI, \S 1, No 6,
Cor. 2]{Bo68}. 

We then compute
\begin{equation*}
\begin{split}
\Del_{I_v} & =x_\beta^{-1} (1-\de_\beta) \sum_{w\leq v'} a_{v',w}\de_w = \sum_{w \leq v'} x_\beta^{-1}a_{v',w} \de_w -\sum_{w\leq v'} x_\beta^{-1} s_\beta(a_{v',w})\de_{s_\beta w}
\\ & \equalbyeq{stableBruhat_item} - x_\beta^{-1} s_\beta(a_{v',v'})\de_v +\sum_{w < v} a_{v,w} \de_w. 
\end{split}
\end{equation*}
So part~\eqref{zerounless_item} holds and 
$a_{v,v} = -x_\beta^{-1}s_\beta(a_{v',v'})$. 
Hence, the expression of $a_{v,v}$ in part~\eqref{extremecoeffs_item}
follows from property~\eqref{negroots_item}. 
Property \eqref{inverseBruhat_item} implies \eqref{reverse_item} by applying the anti-involution sending $\delta_w$ to $\delta_{w^{-1}}$ and, thus, identifying $a'_{v,w}=a_{v,w^{-1}}$.
\end{proof}

\begin{rem}
Observe that the coefficient $a_{v,v}$ doesn't depend on the choice of
a reduced sequence $I_v$ of $v$.
\end{rem}

\begin{cor}(cf. \cite[Cor.~4.5]{KK}) \label{Delbasis_cor} For each $w\in W$, let $I_w$ be a reduced sequence of $w$. The elements $(\Del_{I_w})_{w\in W}$ form a basis of $\QW$ as a left (resp. right) $\qQ$-module.
The element $\de_v$ decomposes as $\sum_{w\leq v} b_{w,v}
\Del_{I_w}$ with $b_{w,v}$ in $\qQ$. 
Furthermore, \[
b_{v,v}=(-1)^l \prod_{\al \in v (\RS_-) \cap \RS_+} x_{\al}.
\]
\end{cor}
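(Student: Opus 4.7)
The plan is to read off this corollary directly from Lemma~\ref{Deldecomp_lem} via a triangular change-of-basis argument over $\qQ$.

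First I would observe that by parts~\eqref{zerounless_item} and~\eqref{extremecoeffs_item} of Lemma~\ref{Deldecomp_lem},
\[
\Del_{I_v}=a_{v,v}\de_v+\sum_{w<v}a_{v,w}\de_w,
\]
where $a_{v,v}=(-1)^{\ell(v)}\prod_{\al\in v(\RS_-)\cap\RS_+}x_\al^{-1}$ is a unit in $\qQ$ (a product of invertible elements). Fix any total order on the finite set $W$ that refines the Bruhat order. With respect to this order, the matrix $M=(a_{v,w})_{v,w\in W}$ expressing the $\Del_{I_v}$'s in the left $\qQ$-basis $\{\de_w\}_{w\in W}$ of $\QW$ is triangular with invertible diagonal, hence invertible over $\qQ$. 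This shows $(\Del_{I_w})_{w\in W}$ is a left $\qQ$-basis of $\QW$.

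Next, writing $M^{-1}=(b_{w,v})$, we get $\de_v=\sum_w b_{w,v}\Del_{I_w}$; a standard inversion of a triangular matrix shows that $M^{-1}$ is triangular in the same order, with diagonal entries equal to the inverses of those of $M$, so in particular
\[
b_{w,w}=a_{w,w}^{-1}=(-1)^{\ell(w)}\prod_{\al\in w(\RS_-)\cap\RS_+}x_\al.
\]
To upgrade vanishing ``$w$ not below $v$ in the chosen total order'' to vanishing ``$w\not\leq v$ in the Bruhat order,'' I would restrict to the finite downset $\{u:u\leq v\}$: the system $\sum_{u\leq v}a_{u,w}X_u=\delta_{v,w}$ is triangular with invertible diagonal inside this subset and therefore admits a unique solution supported on $\{u\leq v\}$; this solution also solves the full system, so by uniqueness of coordinates in the basis $(\Del_{I_w})$ we conclude $b_{w,v}=0$ unless $w\leq v$.

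For the right $\qQ$-module statement I would run the same argument using the decomposition $\Del_{I_v}=\sum_w \de_w a'_{v,w}$ from Lemma~\ref{Deldecomp_lem}(\ref{reverse_item}), which is triangular in the partial order $w\geq v^{-1}$ with invertible diagonal $a'_{v,v^{-1}}=a_{v,v}$. There is no real obstacle here; the only subtle point is the upgrade from ``triangular in a total refinement'' to ``triangular in the Bruhat order,'' which is handled by the downset argument above.
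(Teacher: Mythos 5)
Your proof is correct and follows essentially the same route as the paper: invoke Lemma~\ref{Deldecomp_lem} to see that the change-of-basis matrix $(a_{v,w})$ (resp. $(a'_{v,w})$) is triangular for the Bruhat order with invertible diagonal (resp. anti-diagonal), conclude that $(\Del_{I_w})_{w\in W}$ is a basis, and invert the triangular matrix to get the decomposition of $\de_v$ with $b_{w,w}=a_{w,w}^{-1}$. The only difference is that you spell out, via the downset argument, why the inverse remains triangular with respect to the Bruhat order itself and not merely a total refinement of it --- a point the paper leaves implicit.
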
 

\begin{proof}
The matrix $(a_{v,w})_{(v,w)\in W^2}$, resp. $(a'_{v,w})_{(v,w)\in
  W^2}$, is triangular with invertible coefficients on the diagonal
(resp. the anti-diagonal) by Lemma~\ref{Deldecomp_lem}. It expresses
elements $\Del_{I_w}$ in terms of the basis $(\de_w)_{w\in W}$ of the left (resp. right) $\qQ$-module $\QW$.

The decomposition of $\de_w$ follows from the fact that the inverse
of a triangular matrix is also triangular with inverse coefficients on the diagonal.
\end{proof}

We are now ready to define a key object of the present paper.

\begin{defi}\label{affdemalg_defi}
We define the {\em formal affine
Demazure algebra} $\DcF$ to be the $R$-subalgebra of $\QW$ generated by
elements of the formal group algebra $\sS$ and by Demazure elements
$\Del_\al$ for all $\al\in \RS$.
\end{defi}

The following lemma shows that it is the same object as the one considered in \cite[Definition~6.3]{HMSZ}.
\begin{lem}\label{simplerlem_lem}
The Demazure algebra $\DcF$ coincides with the $R$-subalgebra of $\QW$
generated by elements of $\sS$ and Demazure elements $\Del_i$, for $i\in [l]$.
\end{lem}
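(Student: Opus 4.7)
Let $A$ denote the $R$-subalgebra of $\QW$ generated by elements of $\sS$ and the simple Demazure elements $\Del_1,\ldots,\Del_n$. The inclusion $A \subseteq \DcF$ is immediate from the definitions, so the goal is to show that $\Del_\al \in A$ for every root $\al \in \RS$.

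The key observation is that any root $\al \in \RS$ lies in the $W$-orbit of a simple root, i.e.\ one can write $\al = w(\al_i)$ for some $w \in W$ and some simple $\al_i$ (a standard fact about root systems, following by induction on height). With such a decomposition at hand, I would compute $\Del_\al$ in $\QW$ by brute force. Using $s_\al = w s_i w^{-1}$, we get $\de_{s_\al} = \de_w\, \de_{s_i}\,\de_{w^{-1}}$, and substituting $\de_{s_i} = 1 - x_{\al_i}\Del_i$ gives
\[
1 - \de_{s_\al} \;=\; \de_w\, x_{\al_i}\Del_i\, \de_{w^{-1}}.
\]
On the other hand $x_\al = x_{w(\al_i)} = w(x_{\al_i})$, so from the commutation rule $\de_w\, q = w(q)\,\de_w$ valid for every $q \in \qQ$, we obtain $\de_w\, x_{\al_i} = x_\al\,\de_w$. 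Multiplying on the left by $x_\al^{-1}$ yields the clean identity
\[
\Del_\al \;=\; \de_w\, \Del_i\, \de_{w^{-1}}.
\]

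It remains to check that $\de_w$ and $\de_{w^{-1}}$ belong to $A$. Fixing a reduced expression $w = s_{j_1}\cdots s_{j_m}$, we have $\de_w = \de_{s_{j_1}}\cdots \de_{s_{j_m}}$, and each factor $\de_{s_{j_k}} = 1 - x_{\al_{j_k}}\Del_{j_k}$ lies in $A$ by construction. The same argument applied to a reduced expression for $w^{-1}$ gives $\de_{w^{-1}} \in A$. Combining, $\Del_\al = \de_w\, \Del_i\, \de_{w^{-1}} \in A$, proving the reverse inclusion $\DcF \subseteq A$. There is essentially no obstacle in this argument; the only substantive input is the orbit statement for roots, and the rest is a short manipulation in the twisted algebra.
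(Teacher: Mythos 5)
Your proof is correct and follows exactly the paper's argument: the paper likewise reduces to the conjugation identity $\de_w\Del_{\al_i}\de_{w^{-1}}=\Del_{w(\al_i)}$ together with the fact that $\de_{s_i}=1-x_{\al_i}\Del_i$ puts $R[W]$ inside the subalgebra. You merely spell out the verification of that identity, which the paper states without computation.
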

\begin{proof} Since the Weyl group is generated by reflections and
$\de_{s_i} = 1-x_{\al_i} \Del_i$, 
we have $R[W]\subseteq \DcF$. 
Since any $\al\in \RS$ can be written as $\al=w(\al_i)$ for some
simple root $\al_i$,
the lemma then follows from the formula $\de_w\Del_{\al_i} \de_{w^{-1}}=\Del_{w(\al_i)}$.
\end{proof}

Let us now mention the functorial behaviour of these constructions. 
\begin{prop} \label{functorial_prop} The algebra $S$ and its localization $Q$ as well as the algebra $\QW$ and its subalebra $\DcF$ are functorial in
\begin{itemize}
\item morphisms of root data i.e.\ morphisms of lattices $\phi:\Lambda \to \Lambda'$ sending roots to roots and such that $\phi^\vee(\phi(\alpha)^\vee)=\alpha^\vee$;
\item morphisms of rings $R \to R'$ sending the formal group law $F$ over $R$ to the formal group law $F'$ over $R'$.
\end{itemize}
\end{prop}
\begin{proof}
The formal group algebra $S$ is functorial along both of these situations by \cite[Lemma 2.6]{CPZ}, and since variables $x_\alpha$ are preserved, the functoriality extends to the localization $Q$ of $S$. A morphism of root data induces a morphism of Weyl groups, thus the functoriality extends to $\QW$. Finally, the elements $\Del_\alpha$ are preserved by this functoriality, and therefore so is the subalgebra $\DcF$.
\end{proof}

\section{Invariant presentation of the formal affine Demazure algebra}\label{sec:invar}

In the present section using the action of Demazure elements on $\RcF$
via
Demazure operators we consider the algebra $\tDcF$ of elements of $\QW$ fixing $\sS$. It is closely related to the formal
affine Demazure algebra $\DcF$. We show that the algebra $\tDcF$ is a
free $\RcF$-module with a basis given by products of Demazure elements
(see Corollary~\ref{basistDcF_cor}), hence, generalizing \cite[Thm.4.6.(a)]{KK}.

\medskip

The localization $\qQ=\Qc$ of $\sS=\RcF$ 
has a structure of left $\QW$-module defined
by (cf. \cite[$I_{33}$]{KK})
\[
(\de_w q)q' = w(qq')\text{ for }w\in W\text{ and }q,q'\in \qQ.
\]
Let $\tDcF$ denote the $R$-subalgebra of $\QW$ preserving $\sS$ when
acting on the left, i.e.
\[
\tDcF:=\{x\in \QW\mid x \cdot \sS\subseteq \sS\}.
\]
By definition we have $\sS\subset \tDcF$ and  $\Del_\al \in \tDcF$, since $\Del_\al$ acts on $\sS$ by the
Demazure operator $\De_\al$. Therefore, in this case $\DcF\subseteq \tDcF$.

\medskip

Let $\tor$ be the {\em torsion index} of the root datum 
as defined in \cite[\S 5]{Dem73}. Its prime factors are the torsion
primes listed in the table~in~\S\ref{detprimes_table} together with the prime divisors of $|\wl/\cl|$, by \cite[\S 5, Prop. 6]{Dem73}.

Let $I_0$ be a reduced sequence of the longest element $w_0$ and 
let $N=\ell(w_0)$. Recall that by \cite[5.2]{CPZ} 
there exists an element $u_0\in \IF^N\subset \sS$ such that 
$\aug \De_{I_0}(u_0)=\tor$. 
Moreover, $u_0$ satisfies the following property: if $|I|\le N$, then
\[
\aug \De_I(u_0)=\left\{\begin{array}{ll}\tor, & \text{if } I \text{ is
      a reduced sequence of }w_0,\\
0, & \text{otherwise.}\end{array}\right.
\]

Let us recall a key result for future reference.
\begin{lem} \label{Deltamatrixinvert_lem}
For each $w\in W$, let $I_w$ be a reduced sequence of $w$. Then if the torsion index $\tor$ is invertible in $R$, the matrix 
$\left(\De_{I_v}\De_{I_w}(u_0)\right)_{(v,w)\in W\times W}$ with
coefficients in $\sS$ is invertible. Thus, if $\tor$ is just regular, the kernel of this matrix is trivial.
\end{lem}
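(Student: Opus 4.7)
The plan is to reduce invertibility of $M:=(\De_{I_v}\De_{I_w}(u_0))_{v,w\in W}$ over $\sS$ to a triangularity statement for its augmentation $\aug(M)$ over $R$, then lift back using that $\sS=\RcF$ is $\IF$-adically complete. The first step is to compute the entries of $\aug(M)$. Since $\De_{I_v}\De_{I_w}$ equals $\De_{I_vI_w}$ for the concatenated sequence of length $\ell(v)+\ell(w)$, and $u_0\in\IF^N$ is characterized by the vanishing property recalled just before the statement (i.e.\ $\aug\De_I(u_0)=\tor$ when $|I|\le N$ and $I$ is a reduced sequence of $w_0$, and $0$ otherwise), the entry $\aug(M_{v,w})$ is $0$ whenever $\ell(v)+\ell(w)<N$, equals $\tor\,\delta_{w,v^{-1}w_0}$ when $\ell(v)+\ell(w)=N$, and is unconstrained when $\ell(v)+\ell(w)>N$; only the first two cases will matter.

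Next I triangularize $\aug(M)$. Order the rows by nondecreasing $\ell(v)$, and pair the $i$-th row $v^{(i)}$ with the column labeled $w^{(i)}:=(v^{(i)})^{-1}w_0$; the length-reversing bijection $W_k\to W_{N-k}$, $v\mapsto v^{-1}w_0$, makes this a valid column ordering of nonincreasing length. For a position strictly above the diagonal (row $i$, column $j$ with $i<j$), the column corresponds to $w=(v^{(j)})^{-1}w_0$, and $\ell(v^{(i)})+\ell(w)=N-(\ell(v^{(j)})-\ell(v^{(i)}))\le N$: strict inequality forces the entry to $0$, while equality puts $v^{(i)}$ and $v^{(j)}$ in the same length class as distinct elements, whence $w=w^{(j)}\ne w^{(i)}=(v^{(i)})^{-1}w_0$ and the entry again vanishes. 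Diagonal entries are all $\tor$, so $\aug(M)$ is lower triangular with $\det\aug(M)=\pm\tor^{|W|}$.

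Finally I lift to $\sS$. Since $\aug$ is a ring map, $\aug(\det M)=\det\aug(M)=\pm\tor^{|W|}$. If $\tor$ is a unit in $R$, then $\aug(\det M)$ is a unit; since $1+\IF\subseteq\sS^{\times}$ by $\IF$-adic completeness (geometric series), any element of $\sS$ whose augmentation is a unit is itself a unit, so $\det M\in\sS^{\times}$ and $M$ is invertible. If $\tor$ is only regular, then $\tor^{|W|}$ is regular in $R$, and since $\sS\simeq R\lbr x_1,\ldots,x_n\rbr$ has $R$-free associated graded on monomials, any element of $\sS$ with regular augmentation is itself regular in $\sS$ (by the standard leading-term argument on a hypothetical annihilator). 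Hence $\det M$ is regular, and the adjugate identity $\mathrm{adj}(M)\cdot M=\det(M)\cdot I$ then forces any kernel vector to be annihilated by $\det M$ and so to vanish.

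The main obstacle is the bookkeeping in the triangularization step: one needs the length-reversing bijection $v\mapsto v^{-1}w_0$ to align rows and columns so triangularity becomes visible, and the vanishing strictly above the diagonal has to be checked separately when $\ell(v)+\ell(w)<N$ and when $\ell(v)+\ell(w)=N$, both handled uniformly by the defining property of $u_0$.
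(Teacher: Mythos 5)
Your proof is correct and is essentially the argument the paper relies on: the paper's own proof is a one-line deferral to \cite[Prop.~6.6]{CPZ}, whose content is exactly your computation that the augmented matrix becomes triangular with $\tor$ on the diagonal after ordering by length and pairing $v$ with $v^{-1}w_0$, followed by lifting invertibility (resp.\ regularity) of the determinant from $R$ to $\sS$ via $\IF$-adic completeness. You have simply written out in full the details the paper leaves to the citation.
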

\begin{proof}
It readily follows from the above property of $u_0$, see \cite[Prop.~6.6]{CPZ} and its proof. 
\end{proof}

\begin{prop} \label{DFgenmodule_prop}
Assume that $\tor$ is regular in $R$. Also assume
that $2\in R^\times$ or that either all or none of the irreducible components of the root
datum are $C_n^{\sco}$, $n\geq 1$.

Then the $R$-algebra $\tDcF$ is the left $\sS$-submodule of
  $\QW$ 
generated by $\{\Del_{I_w}\}_{w\in W}$.
\end{prop}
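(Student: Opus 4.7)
The plan is to use Corollary~\ref{Delbasis_cor} to write every $x \in \tDcF$ uniquely in the canonical $\qQ$-basis of $\QW$ as
\[ x = \sum_{w \in W} q_w \Del_{I_w}, \qquad q_w \in \qQ, \]
and then to show that each $q_w$ must in fact lie in $\sS$. The reverse inclusion is immediate: $\sS$ preserves itself by multiplication and each $\Del_{I_w}$ preserves $\sS$ via the Demazure operator $\De_{I_w}$, so $\sS \cdot \Del_{I_w} \subseteq \tDcF$.

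To extract the needed information about the $q_w$'s from the defining property $x \cdot \sS \subseteq \sS$, I would test $x$ against the elements $\De_{I_w}(u_0) \in \sS$, where $u_0$ is the element recalled just before Lemma~\ref{Deltamatrixinvert_lem}. Writing out the $\QW$-action on $\sS$ yields, for each $w \in W$,
\[ y_w := x \cdot \De_{I_w}(u_0) = \sum_v q_v\,\De_{I_v}(\De_{I_w}(u_0)) = \sum_v q_v\, M_{v,w}, \]
where $M_{v,w} := \De_{I_v}(\De_{I_w}(u_0))$ is exactly the matrix appearing in Lemma~\ref{Deltamatrixinvert_lem}, and $y_w \in \sS$ since $x \in \tDcF$. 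The problem is thereby reduced to solving a linear system across the tower of rings in diagram~\eqref{cartesian_diag}.

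The key step is then to pass to the base ring $R[\tfrac{1}{\tor}]$, in which $\tor$ becomes invertible. Over that ring Lemma~\ref{Deltamatrixinvert_lem} makes $M$ invertible as a matrix with entries in $\FGR{R[\tfrac{1}{\tor}]}{\cl}{F}$, so solving the system for $(q_v)_v$ forces $q_v \in \FGR{R[\tfrac{1}{\tor}]}{\cl}{F}$. Since moreover $q_v \in \qQ$ from the expansion above, Corollary~\ref{localintersection_lem} pushes $q_v$ into $\sS[\tfrac{1}{\tor}]$, and a second intersection via Lemma~\ref{intersection_lem} (applied with $r = \tor$, regular by hypothesis) finally yields $q_v \in \sS[\tfrac{1}{\tor}] \cap \qQ = \sS$, as required.

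The main obstacle, and the reason the hypotheses on $2$ and on the $C_n^{\sco}$ components appear, is precisely to keep both intersection lemmas available. The residual case in which every irreducible component is $C_n^{\sco}$ is not covered by those lemmas but is harmless: by the table in section~\ref{sec:rootsyst} one then has $\tor = 1$, so $M$ is already invertible over $\sS$ itself and the intersection step becomes vacuous.
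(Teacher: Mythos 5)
Your proposal is correct and follows essentially the same route as the paper: expand $x$ in the $\qQ$-basis $\{\Del_{I_w}\}$ via Corollary~\ref{Delbasis_cor}, test against the elements $\De_{I_w}(u_0)$, invert the matrix of Lemma~\ref{Deltamatrixinvert_lem} over $R[\tfrac{1}{\tor}]$, intersect back down to $\sS$ via Lemma~\ref{intersection_lem}, and dispose of the all-$C_n^{\sco}$ case by $\tor=1$. The only difference is that you make explicit the intermediate passage through $\FGR{R[\tfrac{1}{\tor}]}{\cl}{F}$ and Corollary~\ref{localintersection_lem} to land in $\sS[\tfrac{1}{\tor}]$, a step the paper compresses into a single assertion.
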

\begin{proof}
Observe that the hypotheses of the proposition imply that $\sS$ is $\Sigma$-regular, so $\tDcF$ is well-defined.

By Corollary~\ref{Delbasis_cor} we can write $y\in \tDcF$ as
$y=\sum_v c_v \Del_{I_v}$, where $c_v\in\qQ$.
It is enough to prove that $c_v \in \sS$ for each $v$. 
Apply $y$ successively to the elements $\De_{I_w}(u_0)$, for all $w\in
W$. By definition of $\DcF$ all resulting elements $\sum_v c_v
\De_{I_v}\De_{I_w}(u_0)$ are in $\sS$. 
By Lemma \ref{Deltamatrixinvert_lem}, the matrix 
$\left(\De_{I_v}\De_{I_w}(u_0)\right)_{(v,w)\in W\times W}$ with
coefficients in $\sS$ becomes invertible
after inverting $\tor$. This implies that
$c_v \in \FGR{R[\tfrac{1}{\tor}]}{\cl}{F}$ for each $v\in W$. 

If all irreducible components of the root system are $C_n^{\sco}$,
$n\geq 1$, we are finished since $\tor=1$.
In the remaining two cases ($2$ is invertible or there are no
components $C_n^{\sco}$) we have $\FGR{R[\tfrac{1}{\tor}]}{\cl}{F} \cap \qQ = \sS$ by Corollary~\ref{localintersection_cor} and Lemma~\ref{intersection_lem}.
\end{proof}
 
\begin{cor}\label{basistDcF_cor}
Under the hypotheses of Proposition~\ref{DFgenmodule_prop} the elements $\{X_{I_w}\}_{w\in W}$ form a basis of
the left $\sS$-module $\tDcF$.
\end{cor}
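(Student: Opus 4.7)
The plan is to observe that $X_{I_w} = \Del_{I_w}$ (the symbols $X$ and $\Del$ denote the same Demazure element in the paper's notation), so the corollary combines the generation statement of Proposition~\ref{DFgenmodule_prop} with a linear independence statement that is essentially free from Corollary~\ref{Delbasis_cor}.

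First, I would invoke Proposition~\ref{DFgenmodule_prop} directly: under the stated hypotheses, $\tDcF$ equals the left $\sS$-submodule of $\QW$ generated by the family $\{X_{I_w}\}_{w\in W}$. This immediately gives that the family spans $\tDcF$ over $\sS$, so only linear independence remains.

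For linear independence, I would appeal to Corollary~\ref{Delbasis_cor}, which asserts that $\{X_{I_w}\}_{w\in W}$ is a basis of $\QW$ as a left $\qQ$-module. In particular, the family is $\qQ$-linearly independent. Since the hypotheses of the corollary include regularity of $\tor$ together with the case assumption on $C_n^{\sco}$ components (or invertibility of $2$), Lemma~\ref{localizationinj_lem} applies and the localization map $\sS\hookrightarrow \qQ$ is injective. Consequently, any $\sS$-linear relation $\sum_w c_w X_{I_w}=0$ in $\tDcF\subseteq \QW$ is a $\qQ$-linear relation, forcing each $c_w=0$.

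There is no real obstacle: the work has already been done in Proposition~\ref{DFgenmodule_prop} (the hard part, establishing generation by means of the non-degeneracy of the matrix $(\De_{I_v}\De_{I_w}(u_0))$ and the cartesian square of Lemma~\ref{intersection_lem}) and in Corollary~\ref{Delbasis_cor} (which supplies linear independence over $\qQ$ via the triangular change-of-basis matrix from Lemma~\ref{Deldecomp_lem}). The corollary therefore reduces to juxtaposing these two facts.
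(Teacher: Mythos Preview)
Your proposal is correct and follows essentially the same route as the paper's proof: generation is taken directly from Proposition~\ref{DFgenmodule_prop}, and linear independence over $\sS$ is deduced from the $\qQ$-linear independence of Corollary~\ref{Delbasis_cor} together with the injectivity of $\sS \to \qQ$ from Lemma~\ref{localizationinj_lem}. The paper's proof is just a one-line version of exactly this argument.
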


\begin{proof}
The elements $\Del_{I_w}$ are independent over $\sS$, since they are
independent over $\qQ$ and the map $\sS \to \qQ$ is injective by Lemma~\ref{localizationinj_lem}.
\end{proof}

\begin{example}
Here is an example where the torsion index $\tor=2$ is regular in $R=\Z$ while 
the remaining assumptions of Proposition~\ref{DFgenmodule_prop} are not satisfied
and its conclusion does not hold. 

Consider the additive formal group law over $\Z$ and a direct sum of root data
$C_1^{\adj}$ and $C_1^{\sco}$ 
with the respective lattices $\rl$ and $\wl'$. Let $\al$ be the
simple root of $\rl$, and let $\omega'$ be the fundamental weight of
$\wl'$. Consider the isomorphism $\FGR{\Z}{\cl\oplus \cl'}{F}\simeq
\Z\lbr x,y \rbr$ defined by $x_\al\mapsto x$ and
$x_{\omega'}\mapsto y$. Observe
that 
$\qQ\simeq \Z\lbr x,y \rbr [\tfrac{1}{x},\tfrac{1}{2y}]$, so it contains
$\tfrac{1}{2}$. 
 
Consider the simple reflection $s_\al$. By definition 
$s_\al(y)=y$ and $s_\al(x)=-x$. 
Therefore, $\De_\al$ acts on $\qQ$ linearly over $\Z\lbr y\rbr$ and 
$\De_\al(x^i)=(x^i +(-x)^i)/x$ for each $i\ge 1$. 
So if $z=\sum_{i\ge 0} a_i x^i$ with $a_i \in \Z\lbr y\rbr$, then
$\De_\al(z)=\sum_{i\ge 1} a_i (1+(-1)^i) x^{i-1}$ is divisible by
$2$. Hence,  $\tfrac{1}{2}\Del_\al$ is in $\tDcF$, while it is not
in the left $\sS$-submodule, since $\tfrac{1}{2} \notin
\Z\lbr x,y\rbr$. Observe that the proposition holds if $\Z$ is replaced by
$\Z[\tfrac{1}{2}]$.
\end{example}

\section{Presentation in terms of generators and relations}\label{sec:present}

In the present section we describe the formal affine Demazure algebra
$\DcF$
in terms of generators and relations. The main result 
(Theorem~\ref{presentationDF_theo})
generalizes \cite[Thm.~6.14]{HMSZ}. We also establish an
isomorphism (Theorem~\ref{DF=ED_theo}) 
between the formal affine Demazure algebra and the algebra
of Demazure operators on $\RcF$ of \cite[\S4]{CPZ}.

\begin{lem} \label{decompdifference_lem} 
Let $I$ and $I'$ be reduced sequences of the same element $w\in W$. Then in the formal affine Demazure algebra $\DcF$ we have
\[
\Del_{I} -\Del_{I'}= \sum_{v<w} c_v \Del_{I_v}\;\text{ for some }c_v \in S.
\]
\end{lem}
 
\begin{proof}
The twisted formal group algebra is functorial in
the root datum and in morphisms of rings preserving formal group laws by Proposition \ref{functorial_prop}, and the functoriality preserves the elements $\Del_\alpha$. We may therefore assume that each irreducible component of
our root datum is adjoint (hence not $C_n^{\sco}$) and that the base ring is the Lazard ring, in which all integers are regular. In particular, all the hypotheses~of~\ref{DFgenmodule_prop} are satisfied.
Then, by Proposition~\ref{DFgenmodule_prop}, 
the difference $\Del_{I}-\Del_{I'}\in \tDcF$ can be written as a linear
combination $\sum_{w \in
  W} c_v \Del_{I_v}$, $c_v\in \sS$.
Since $\sS$ injects into $\qQ$ by Lemma~\ref{localizationinj_lem}, it
suffices to check that $c_v = 0$ unless $v\leq w$ (in the Bruhat
order) when this linear combination is the one obtained in $\QW$ by Corollary~\ref{Delbasis_cor}. We therefore compute:
\begin{equation*}
\begin{aligned}
\Del_{I}-\Del_{I'} 	& \equalby{Lem. \ref{Deldecomp_lem} \eqref{zerounless_item}} \sum_{u\leq w} a_u \de_u -\sum_{u\leq w} a'_u \de_u 
 \equalby{Lem. \ref{Deldecomp_lem} \eqref{extremecoeffs_item}} \sum_{u<w} (a_u - a'_u) \de_u \\ 
 			& \equalby{Cor. \ref{Delbasis_cor}} \sum_{u<w} (a_u - a'_u) \sum_{v\leq u} b_v \Del_{I_v}  = \sum_{v<w} b_v\Big(\sum_{v\leq u <w} (a_u-a'_u)\Big) \Del_{I_v}. \qedhere
\end{aligned}
\end{equation*}
\end{proof}

Let us examine commutation relations between Demazure elements of
simple roots of root datum of rank $1$ or $2$. 

\begin{example}
For any root $\alpha$ and any $q \in Q$, we have \cite[Lemma~6.5]{HMSZ}
\begin{equation} \label{Qcommute_eq}
\Del_\alpha q = \De_\alpha(q) + s_\alpha(q) \Del_\alpha.
\end{equation}
\end{example}

\begin{example}
In the rank $1$ case we have \cite[(6.1)]{HMSZ}
\begin{equation} \label{rankone_eq}
\Del_\al^2= \kp_\al \Del_\al = \Del_\al \kp_\al
\end{equation}
where $\kp_\al=\frac{1}{x_\al}+\frac{1}{x_{-\al}}$ actually
lives in $\sS$ \cite[Def. 3.12]{CPZ}. 
\end{example}

\begin{example}\label{ranktwo_ex}
For the rank $2$ case let $\al_1$ and $\al_2$ be simple roots and let
$W$ be the Weyl group generated by $s_1=s_{\al_1}$ and
$s_2=s_{\al_2}$. 
Let $m=2,3,4$ or $6$ be the order of $s_1 s_2$, i.e. the length of the longest element 
$$w_0^{\al_1,\al_2}=\underbrace{s_1 s_2 s_1 \cdots}_{\text{$m$ times}} = \underbrace{s_2 s_1 s_2 \cdots}_{\text{$m$ times}}.$$
Any element $w \neq w_0^{\al_1,\al_2}$ in $W$ has a unique reduced sequence, of the
form either $(1,2,1 \cdots)$ or $(2,1,2 \cdots)$. Let $I_w$ be that
sequence. In this case, Lemma~\ref{decompdifference_lem} says that
\begin{equation} \label{ranktwocommute_eq}
\underbrace{\Del_1 \Del_2 \Del_1 \cdots}_{\text{$m$ times}} -
\underbrace{\Del_2 \Del_1 \Del_2 \cdots}_{\text{$m$ times}} = \sum_{w
  < w_0^{\al_1,\al_2}} \eta^{\al_1,\al_2}_w \Del_{I_w},\;\text{ where }\eta^{\al_1,\al_2}_w \in \sS.
\end{equation}
\end{example}

\begin{rem}
In \cite[Proposition 6.8]{HMSZ} several explicit formulas were given
for the coefficients $\eta_w^{\al_i,\al_j}$ ($i\neq j$) 
appearing in similar decompositions with coefficients on the right.
Moreover, case by case it was shown that 
$\eta^{\al_1,\al_2}_w\in \sS$ for root data of types $A_2$, $B_2$
and $D_2$ (the $G_2$ case was left open). 
The formula~\eqref{ranktwocommute_eq} provides a uniform proof of this fact for
all types including $G_2$.
\end{rem}

\begin{lem} \label{XIexpression_lem} If 
$\sS$ is $\RS$-regular, then for any sequence $I$, we can write $\Del_I=\sum_{v\in W}a_{I,I_v} X_{I_v}$ for some $a_{I,I_v}\in S$
such that:
\begin{enumerate}[\indent(1)]
\item \label{reducedcase_item} If $I$ is a reduced decomposition of $w\in W$, then  $a_{I,I_v}=0$ unless $v\le w$, and $a_{I,I_w}=1$.
\item \label{nonreducedcase_item} If $I$ is not reduced, then $a_{I,I_v}=0$ for all $v$ such that $\ell(v)\ge |I|$.
\end{enumerate}
and this decomposition is unique.
\end{lem}
\begin{proof} Uniqueness holds because the $(\Del_{I_w})_{w\in W}$ are linearly independent since they are linearly independent over $\qQ$, and the map $\sS\to \qQ$ is injective. 
\eqref{reducedcase_item} holds by Lemma~\ref{decompdifference_lem} with $I'=I_w$. If $I$ is not reduced, then we proceed by induction on its length, which is at least $2$. We write $I$ as $(i)\cup I'$. If $I'$ is not reduced we are done by induction and applying identity \eqref{Qcommute_eq} to move coefficients to the left of $\Del_i$. If $I'$ is a reduced sequence of $w'\in W$, then 
$$\Del_i \Del_{I'} \equalbyeq{reducedcase_item} \Del_i \sum_{v\leq w'} a_v \Del_{I_v} \equalbyeq{delqcomm_eq} \sum_{v<w'} \Del_i(a_v) \Del_{I_v} + \Del_i \Del_{I_{w'}} +\sum_{v<w'} a_v \Del_i \Del_{I_v}.$$
By induction, the third term is irrelevant. Since $(i)\cup I_{w'}$ is not reduced, there is a reduced sequence $I''$ of $w'$ with first term $i$ by the exchange property \cite[Ch. IV, \S 1, no 5, Prop. 4]{Bo68}. Using part \eqref{reducedcase_item}, and identity \eqref{square_eq}, $\Del_i \Del_{I_{w'}}$ also decomposes as a linear combination of $\Del_{I_v}$ with $v$ of length at most $\ell(I_{w'})=|I'|$ and we are done.
\end{proof}

As an immediate consequence we obtain the following generalization of
\cite[Lem.~6.13]{HMSZ} and \cite[Thm.~4.6.(a)]{KK}.

\begin{prop}\label{tDFcoincidence_prop}
If 
$\sS$ is $\RS$-regular, then the $R$-algebra $\DcF$ is free as a left $\sS$-submodule of $\QW$, with basis $(\Del_{I_w})_{w\in W}$.
\end{prop}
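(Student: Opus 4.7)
My plan is to verify two claims in turn: that $(\Del_{I_w})_{w\in W}$ generates $\DcF$ as a left $\sS$-module, and that this family is $\sS$-linearly independent.

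For the generation step, the starting point is Definition~\ref{affdemalg_defi} together with Lemma~\ref{simplerlem_lem}: every element of $\DcF$ is an $R$-linear combination of finite alternating products of elements of $\sS$ and of simple Demazure elements $\Del_i$. I will push all $\sS$-scalars to the leftmost position using the commutation rule $\Del_i q = \De_i(q) + s_i(q)\Del_i$ of \eqref{delqcomm_eq}; because both $\De_i(q)$ and $s_i(q)$ remain in $\sS$, a finite induction on the number of $\sS$-scalars appearing to the right of some $\Del_i$ rewrites any element of $\DcF$ as a finite sum $\sum_I q_I\Del_I$ with $q_I\in\sS$, where $I$ ranges over sequences drawn from $\{1,\ldots,n\}$. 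Then I will apply Corollary~\ref{XIexpression_cor} to each $\Del_I$ to express it as $\sum_{v\in W} a_{I,I_v}\Del_{I_v}$ with $a_{I,I_v}\in\sS$, and collecting terms yields a presentation of the starting element as an $\sS$-linear combination of the $\Del_{I_w}$.

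For the independence step, I appeal to Corollary~\ref{Delbasis_cor}, which shows that $(\Del_{I_w})_{w\in W}$ is a basis of $\QW$ as a left $\qQ$-module, hence is $\qQ$-linearly independent. Since $\sS$ is $\RS$-regular by hypothesis, the localization is formed at regular elements, so $\sS\hookrightarrow \qQ$ by Lemma~\ref{localizationinj_lem}, and $\qQ$-independence implies $\sS$-independence inside $\QW$.

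I do not expect a serious obstacle, since the substantive combinatorics has already been carried out in Corollary~\ref{XIexpression_cor} and Corollary~\ref{Delbasis_cor}. The only two points to watch are that the rewriting in the generation step genuinely stays inside $\sS$ (which holds because $\De_i$ and $s_i$ both preserve $\sS$), and that the passage from $\qQ$-independence to $\sS$-independence requires the injectivity of $\sS\to\qQ$, which is precisely the content of the $\RS$-regularity hypothesis.
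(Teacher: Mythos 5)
Your proposal is correct and follows essentially the same route as the paper: the paper also establishes spanning by combining the commutation rule \eqref{delqcomm_eq} with Corollary~\ref{XIexpression_cor} (phrased as showing the $\sS$-span of the $\Del_{I_w}$ is an $R$-subalgebra containing the generators, rather than rewriting words directly), and it obtains independence exactly as you do, from the $\qQ$-basis property of Corollary~\ref{Delbasis_cor} together with the injectivity of $\sS\to\qQ$ guaranteed by $\RS$-regularity.
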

\begin{proof} 
We've already seen that the $\Del_{I_w}$ are linearly independent.
Let $P$ denote the left $\sS$-submodule of $\QW$ generated by $\{\Del_{I_w}\}_{w\in W}$. By definition we have $P\subseteq \DcF$.
It suffices to prove that $P$ is a $R$-subalgebra, which reduces to
showing that $\Del_i q \Del_{I_w} \in P$ for any $i\in [n]$, $w\in W$
and $q\in \sS$. Using \eqref{Qcommute_eq}, it reduces further
to showing that $\Del_i\Del_{I_w}\in P$, which holds by Lemma~\ref{XIexpression_lem}. 
\end{proof}

\begin{rem}\label{tDFcoinc_rem} 
Under the hypotheses of~\ref{DFgenmodule_prop}, Proposition~\ref{DFgenmodule_prop} and Proposition~\ref{tDFcoincidence_prop} imply that $\DcF=\tDcF$. 
\end{rem} 

The following theorem describes the twisted formal group algebra $\QW$
and the formal affine Demazure algebra
$\DcF$ in terms of generators and relations. It generalizes \cite[Theorem~6.14]{HMSZ}.
 
\begin{theo} \label{presentationDF_theo}
Let $\RS\hookrightarrow \cl^\vee$ be a root datum and let $F$ be a
formal group law over~$R$. Assume that the formal group algebra
$\sS=\RcF$ is $\RS$-regular.
Let $\qQ=\Qc$ denote the localization of $\sS$.
Given a set of simple roots $\{\al_1,\ldots,\al_n\}$ with
associated simple reflections 
$\{s_1,\ldots,s_n\}$, let $m_{i,j}$ denote the order of the product $s_i s_j$ in
the Weyl group. 

Then elements $q\in \qQ$ (resp. $q\in \sS$) and the Demazure elements
$\Del_{i}=\Del_{\al_i}$ satisfy the following relations ($\forall i,j\in[n]$)
\begin{align}
\label{delqcomm_eq} & \Del_i q = \De_i(q) + s_i(q) \Del_i,  & \\
\label{square_eq} & \Del_i^2= \kp_{\al_i} \Del_i, & \\
\label{twocomm_eq} & \underbrace{\Del_i \Del_j \Del_i \cdots}_{\text{$m_{i,j}$ times}} - \underbrace{\Del_j \Del_i \Del_j \cdots}_{\text{$m_{i,j}$ times}} = \sum_{w < w_0^{\al_i,\al_j}} \eta^{\al_i,\al_j}_w \Del_{I_w}, ~\eta^{\al_i,\al_j}_w\in S. &
\end{align}
Here $w_0^{\al_1,\al_2}$ is defined in Example~\ref{ranktwo_ex}. These relations together with the ring law in $\sS$ and the fact that the
$\Del_i$ are $R$-linear 
form a complete set of relations in the twisted formal
group algebra $\QW$ (resp. the formal affine Demazure algebra $\DcF$).
\end{theo}
\begin{proof}
The three relations hold by \eqref{Qcommute_eq}, \eqref{rankone_eq} and \eqref{ranktwocommute_eq}. The presentation of the twisted formal group algebra $\QW$ holds by the same proof as \cite[Theorem 6.14]{HMSZ}. 
The presentation of the formal affine Demazure algebra~$\DcF$ follows
 similarly, because of the fact that $\eta^{\al_i,\al_j}_w\in \sS$.
Note that in \cite{HMSZ}, there is a standing assumption that the base ring $R$ is a domain, but the proof of this presentation goes through as long as the $X_{I_w}$ form a $\qQ$-basis of $\QW$ (by Cor. \ref{Delbasis_cor}) and an $S$-basis of $\DcF$ (by Prop. \ref{tDFcoincidence_prop}).
\end{proof}

We now explain the relationship between the algebras $\DcF$, $\tDcF$, $\QW$ and $R$-linear endomorphisms of $\qQ$ and $\sS$. Following \cite[Definition 4.5]{CPZ} 
let $\ED$ denote the subalgebra
of $\End_R(\sS)$ generated by Demazure operators 
$\De_\al$ for all $\al\in \RS$ and left multiplications by elements in $\sS$. 

Let $\phi\colon \QW \to \End_R(\qQ)$ be the $R$-algebra homomorphism induced
by the left action of $\QW$ on $\qQ$. By definition the formal group
algebra $\sS$ acts on the left on both $R$-algebras, $\phi$ is $\sS$-linear, and
$\phi(\Del_\al)=\De_\al$ for each $\al\in \RS$.

We then have the following commutative diagram, where
$\mathrm{Stab}(\sS)$ is the $R$-subalgebra of $\End_R(\qQ)$ mapping
$\sS$ into itself, $\mathrm{res}_{\sS}$ is the restriction of an
endomorphism of $\qQ$ to $\sS$ and $\phi_{\DcF}$ is the restriction of
$\phi$ to $\DcF$
\begin{equation} \label{Endo_diag}
\begin{split}
\xymatrix{
\DcF \ar@{->>}[dd]_{\phi_{\DcF}} \ar@{}[r]|-{\textstyle \subseteq} & \tDcF \ar[d] \ar@{}[r]|-{\textstyle \subseteq}  & \QW \ar[d]^{\phi} \\
 & \mathrm{Stab}(\sS) \ar[d] \ar@{}[r]|-{\textstyle \subseteq} & \End_R(\qQ) \ar[d]^{\mathrm{res}_S} \\
\ED \ar@{}[r]|-{\textstyle \subseteq} & \End_R(\sS) \ar@{}[r]|-{\textstyle \subseteq} & \Hom_R(\sS,\qQ) 
}
\end{split}
\end{equation}
Observe that the image of $\phi_{\DcF}$ is $\ED$ since $R$-algebra generators of $\DcF$ map to $R$-algebra generators of $\ED$. 

\begin{theo} \label{DF=ED_theo}
In Diagram \eqref{Endo_diag}, if $\tor$ is regular in $R$ and $\sS$ is $\RS$-regular, then the
map $\mathrm{res_S} \circ \phi$ is injective and, therefore, $\phi\colon \QW \to \End_R(\qQ)$ is injective and the map $\phi_{\DcF}\colon \DcF \to \ED$ is an isomorphism.
\end{theo}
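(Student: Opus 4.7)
The plan is to prove injectivity of $\mathrm{res}_\sS\circ \phi$ directly on all of $\QW$; injectivity of $\phi$ and of $\phi_{\DcF}$ then follow formally, and combined with the surjectivity of $\phi_{\DcF}\colon \DcF \twoheadrightarrow \ED$ already noted above, this gives the claimed isomorphism.

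To set this up, I would use Corollary~\ref{Delbasis_cor} to write any $y\in\QW$ uniquely as $y=\sum_{w\in W} c_w \Del_{I_w}$ with $c_w\in\qQ$. Supposing $y$ acts trivially on $\sS$, the defining action gives the identity $\sum_{w\in W} c_w \De_{I_w}(s)=0$ in $\qQ$ for every $s\in\sS$. The key step is to specialize $s=\De_{I_v}(u_0)\in\sS$ for each $v\in W$, where $u_0\in \IF^N$ is the element from the paragraph preceding Lemma~\ref{Deltamatrixinvert_lem}. Since $\De_{I_w}\circ \De_{I_v}$ applied to $u_0$ is precisely the $(w,v)$-entry $M_{w,v}$ of the matrix $M=(\De_{I_v}\De_{I_w}(u_0))_{(v,w)\in W\times W}$ with coefficients in $\sS$, we obtain the system
\[
\sum_{w\in W} c_w M_{w,v}=0 \quad\text{in } \qQ,\quad \text{for every } v\in W,
\]
i.e.\ $M^t c=0$ with $c=(c_w)_{w\in W}\in\qQ^W$.

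To exploit Lemma~\ref{Deltamatrixinvert_lem}, I would clear denominators: choose a common denominator $p$ (a product of $x_\al$'s, $\al\in\RS$) so that $d_w:=p\,c_w\in\sS$ for each $w$. Then $M^t d=0$ holds in $\qQ^W$; but the entries of $M^t$ and of $d$ all lie in $\sS$, which injects into $\qQ$ by Lemma~\ref{localizationinj_lem} (applicable because $\sS$ is $\RS$-regular), so the equation actually holds in $\sS^W$. Lemma~\ref{Deltamatrixinvert_lem} states that $M$ has trivial kernel on $\sS^W$ under the regularity of $\tor$; equivalently, $\det(M)\in\sS$ is a regular element, hence so is $\det(M^t)=\det(M)$, and $M^t$ has trivial kernel too. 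This forces $d=0$, and since $p$ is regular in $\qQ$, also $c=0$, so $y=0$. This establishes injectivity of $\mathrm{res}_\sS\circ\phi$.

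The remaining conclusions are immediate: if $\phi(y)=0$, then a fortiori $\mathrm{res}_\sS(\phi(y))=0$, hence $y=0$, so $\phi$ is injective; its restriction $\phi_{\DcF}$ is therefore injective as well, and together with the already-noted surjectivity $\phi_{\DcF}(\DcF)=\ED$, this gives the desired isomorphism. The main subtle point is precisely the passage from a kernel statement in $\qQ^W$ to one in $\sS^W$: this is where both the injectivity $\sS\hookrightarrow\qQ$ of Lemma~\ref{localizationinj_lem} and the clearing-of-denominators argument are essential, since Lemma~\ref{Deltamatrixinvert_lem} only supplies the trivial-kernel property at the integral level when $\tor$ is merely regular (not invertible).
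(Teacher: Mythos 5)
Your proof is correct and follows essentially the same route as the paper's: decompose $y$ over the basis $(\Del_{I_w})$ of Corollary~\ref{Delbasis_cor}, evaluate on the elements $\De_{I_w}(u_0)$, and invoke Lemma~\ref{Deltamatrixinvert_lem}. The only (harmless) difference is in handling the regular-but-not-invertible case of $\tor$: where you clear denominators to land in $\sS^W$ and apply the trivial-kernel form of the lemma (passing to the transpose via the determinant criterion), the paper instead base-changes to $R[1/\tor]$, where the matrix is invertible outright.
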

\begin{proof}
Since $\QW$ is a free $Q$-module, it injects into the respective
twisted formal group algebra over $R[1/\tor]$,
so we can assume that the torsion
index is invertible in $R$. 
Let $y$ be an element acting by zero on $\sS$. We can express it as
$y=\sum_{v \in W} a_v \Del_{I_v}$
by Corollary~\ref{Delbasis_cor}. 
Applying $y$ successively to all $\De_{I_w}(u_0)$, $w\in W$, we
conclude that $a_v =0$ for each $v \in W$ by Lemma \ref{Deltamatrixinvert_lem}. This shows that $\mathrm{res_S}\circ \phi$ is injective, and the other two facts follow by diagram chase.
\end{proof}

\begin{rem} 
Note that by \cite[Theorem 4.11]{CPZ}, once reduced sequences $I_w$
have been chosen for all $w \in W$, the family $\{\De_{I_w}\}_{w\in
  W}$ forms a basis of $\ED$ as a $\sS$-module, so the $\sS$-module
$P\subseteq \DcF$ 
from the proof of Proposition~\ref{tDFcoincidence_prop} already surjects to $\ED$ via $\phi_{\DcF}$, even if $\tor$ is not regular in $R$.
\end{rem}

\section{Coproduct on the twisted formal group algebra}\label{sec:copr}

In the present section we introduce and study various (co)products on
the twisted formal group algebra $\QW$. We follow the ideas of \cite[(4.14)]{KK}
and \cite[(2.14)]{kk}. In this section we don't assume that $\sS$ is $\Sigma$-regular.

\medskip

Let $W$ be a group and let $R$ be a commutative ring.
The group ring $R[W]$ is a Hopf $R$-algebra endowed with
a cocommutative coproduct $\RWcopr\colon R[W]\to R[W]\ot
R[W]$ defined by $\de_w \mapsto \de_w \ot \de_w$. Its counit $\RWcoun$ sends any $\de_w$ to $1$
(unless otherwise specified, all modules and tensor products are
over $R$). 

Assume that $W$ acts on $Q$ by means of $R$-linear automorphisms.
Let $\QW=Q\ot R[W]$ 
be the twisted formal group algebra considered in $\S5$.
Let $\incQtoQW\colon \qQ \to \QW$ and $\incRWtoQW\colon R[W]\to \QW$
denote the natural inclusions of $R$-algebras defined by $q\mapsto q \ot
1$ and $\de_w\mapsto 1 \ot \de_w$ respectively. 

\begin{defi}
We define a
coproduct
$\QWRcopr\colon \QW \to \QW\ot \QW$ to be the composition of 
$R$-linear morphisms
\[
\qQ\ot R[W] \tooby{\id_\qQ \ot \RWcopr} \qQ \ot R[W] \ot R[W]
\tooby{\id_{\qQ\ot R[W]}\ot \incRWtoQW} (\qQ \ot R[W]) \ot
(\qQ \ot R[W]).
\] 
\end{defi}

We endow the $R$-module $\QW \ot \QW$ with the usual tensor product
algebra structure, i.e. the product is given by 
$(z_1 \ot z'_1)\cdot (z_2 \ot z'_2) \mapsto (z_1 z_2 \ot z'_1 z'_2)$.  

\begin{lem}
The coproduct $\QWRcopr$ is associative and is a morphism of
$R$-algebras, i.e. $\QWRcopr(z\cdot z') = \QWRcopr(z)\cdot\QWRcopr(z')$.
\end{lem}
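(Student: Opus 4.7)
The plan is to reduce both claims to a direct calculation on the natural spanning set $\{q\delta_w\}_{q\in\qQ,\,w\in W}$ of $\QW$, using that $\QWRcopr$ is $R$-linear by construction.

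First I would unwind the definition to get an explicit formula. Tracing $q\ot\de_w\in\qQ\ot R[W]=\QW$ through the two maps, the first sends it to $q\ot\de_w\ot\de_w$, and the second applies $\incRWtoQW$ to the last factor, producing $(q\ot\de_w)\ot(1\ot\de_w)=q\de_w\ot\de_w$ in $\QW\ot\QW$. So the working formula is
\[
\QWRcopr(q\de_w)=q\de_w\ot\de_w,\qquad q\in\qQ,\ w\in W.
\]

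Associativity is then immediate on the basis. Applying $(\QWRcopr\ot\id)\circ\QWRcopr$ to $q\de_w$ gives $\QWRcopr(q\de_w)\ot\de_w=q\de_w\ot\de_w\ot\de_w$, and applying $(\id\ot\QWRcopr)\circ\QWRcopr$ gives $q\de_w\ot\QWRcopr(\de_w)=q\de_w\ot\de_w\ot\de_w$. Since both composites are $R$-linear and agree on the spanning set, they agree on all of $\QW$.

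For the algebra morphism property I would check the two sides of $\QWRcopr(z\cdot z')=\QWRcopr(z)\cdot\QWRcopr(z')$ on elements $z=q\de_w$, $z'=q'\de_{w'}$. The left-hand side uses the twisted product $(q\de_w)(q'\de_{w'})=qw(q')\de_{ww'}$ and then the formula above, yielding $qw(q')\de_{ww'}\ot\de_{ww'}$. The right-hand side is
\[
(q\de_w\ot\de_w)\cdot(q'\de_{w'}\ot\de_{w'})=(q\de_w)(q'\de_{w'})\ot\de_w\de_{w'}=qw(q')\de_{ww'}\ot\de_{ww'},
\]
using the tensor product algebra structure on $\QW\ot\QW$ and the same twisted product rule in the first factor. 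The two expressions coincide. Since every element of $\QW$ is a finite $R$-linear combination of basis elements $q\de_w$ and the product on $\QW$, the product on $\QW\ot\QW$, and $\QWRcopr$ are all $R$-bilinear/$R$-linear, bilinearly extending this equality gives multiplicativity on all of $\QW$.

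There is no serious obstacle here; the only point that deserves attention is keeping straight the two different uses of the symbol $\de_w$: as an element of $R[W]$ (where $\RWcopr(\de_w)=\de_w\ot\de_w$ is grouplike) and as $\incRWtoQW(\de_w)=1\ot\de_w\in\QW$. Once this identification is made, the proof is a one-line check on basis elements in each case.
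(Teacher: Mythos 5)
Your proposal is correct and follows essentially the same route as the paper: reduce by $R$-linearity to elements $q\de_w$, use the explicit formula $\QWRcopr(q\de_w)=q\de_w\ot\de_w$, and verify both properties by direct computation with the twisted product rule. The paper's proof only writes out the multiplicativity check explicitly, whereas you also spell out the (equally immediate) associativity verification.
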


\begin{proof}
By linearity, it suffices to check these properties on elements of the
form $q\de_w$ with $q\in \qQ$ and $w \in W$. 
By definition we have $\QWRcopr(q\de_w)=q\de_w\ot
\de_w$
Therefore, we obtain
\[
\QWRcopr(q\de_w \cdot
q'\de_{w'})=\QWRcopr(qw(q')\de_{ww'})=
qw(q')\de_{ww'}\ot \de_{ww'}=
\]
\[
q\de_w \cdot
q'\de_{w'}\ot \de_w\cdot \de_{w'}=\QWRcopr(q\de_w)\cdot \QWRcopr(q'\de_{w'}).\qedhere
\]
\end{proof}

\begin{rem}
The coproduct $\QWRcopr$ has no counit in general, and it is not cocommutative, because coefficients in $\qQ$ are arbitrarily put on the left.
\end{rem}
 
\begin{lem}
The map $\incRWtoQW$ is a map of $R$-coalgebras (without counits), i.e. $\QWRcopr \circ \incRWtoQW = (\incRWtoQW \ot \incRWtoQW) \circ \RWcopr$.
\end{lem}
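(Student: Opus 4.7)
The plan is to verify the identity by direct computation on a generating set, exploiting $R$-linearity. Since both $\QWRcopr \circ \incRWtoQW$ and $(\incRWtoQW \ot \incRWtoQW) \circ \RWcopr$ are $R$-linear maps out of $R[W]$, and $R[W]$ is free as an $R$-module with basis $\{\de_w\}_{w\in W}$, it suffices to check equality on each basis element $\de_w$.

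First I would compute the left-hand side. By definition of $\incRWtoQW$ we have $\incRWtoQW(\de_w) = 1 \ot \de_w \in \qQ \ot R[W] = \QW$. Then applying $\QWRcopr$ and unwinding its definition as the composition $(\id_{\qQ\ot R[W]}\ot \incRWtoQW) \circ (\id_\qQ \ot \RWcopr)$, I send $1\ot \de_w$ to $1\ot \de_w \ot \de_w$ and then, via the second factor, view this as $(1\ot \de_w)\ot(1\ot \de_w)$ in $\QW \ot \QW$, i.e.\ $\incRWtoQW(\de_w) \ot \incRWtoQW(\de_w)$.

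Next I would compute the right-hand side. By cocommutativity of $\RWcopr$, we have $\RWcopr(\de_w) = \de_w \ot \de_w$, and then applying $\incRWtoQW \ot \incRWtoQW$ yields $(1\ot \de_w) \ot (1\ot \de_w) \in \QW \ot \QW$. The two expressions agree, which establishes the identity on basis elements, hence on all of $R[W]$ by linearity.

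There is no real obstacle here: the statement is essentially a tautology reflecting the fact that $\QWRcopr$ was defined by pulling back the comultiplication on the $R[W]$-tensor-factor of $\QW = \qQ\ot R[W]$. The only small point to be careful about is to track the bracketings through the definition of $\QWRcopr$, so that one verifies both sides produce the element $(1\ot \de_w)\ot(1\ot \de_w)$ rather than some other grouping.
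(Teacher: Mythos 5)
Your proof is correct and follows exactly the paper's approach, which simply notes that the identity is straightforward to verify on the $R$-basis elements $\de_w$ of $R[W]$; your computation of both sides as $(1\ot\de_w)\ot(1\ot\de_w)$ is the intended check. (One cosmetic remark: $\RWcopr(\de_w)=\de_w\ot\de_w$ is the definition of $\RWcopr$, not a consequence of its cocommutativity.)
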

\begin{proof}
It is straightforward on $R$-basis elements $\de_w$ of $R[W]$.
\end{proof}

\begin{lem}
The map $\id_{\qQ} \ot \RWcopr\colon  \QW \to \QW \ot R[W]$ is a morphism of $R$-algebras (with units), and so is the coproduct $\QWRcopr$. 
\end{lem}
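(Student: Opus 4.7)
The plan is to verify both statements by direct computation on the $R$-linear generators $q\de_w$ (with $q \in \qQ$, $w\in W$) of $\QW$, since $\QW$ is spanned by such elements as a left $\qQ$-module.

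First, for the map $\id_\qQ \ot \RWcopr$, I will identify the target $\QW \ot R[W]$ with $\qQ \ot R[W] \ot R[W]$, equipped with the tensor product algebra structure where the first two factors form the twisted algebra $\QW$ and the third is $R[W]$. Under this identification, $(\id_\qQ \ot \RWcopr)(q \de_w) = q \de_w \ot \de_w$. Unit preservation is immediate from $\RWcopr(\de_1) = \de_1 \ot \de_1$. For multiplicativity, I will compute both sides on a product $(q\de_w)(q'\de_{w'}) = q w(q') \de_{ww'}$: the image of this product is $q w(q') \de_{ww'} \ot \de_{ww'}$, while the product of images is
\[
(q\de_w \ot \de_w)(q'\de_{w'} \ot \de_{w'}) = (q\de_w)(q'\de_{w'}) \ot \de_w \de_{w'} = q w(q') \de_{ww'} \ot \de_{ww'},
\]
using the product in $\QW$ in the first factor and the product in $R[W]$ in the second. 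The two expressions agree, so $\id_\qQ \ot \RWcopr$ is a morphism of $R$-algebras.

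For the second assertion, I observe that $\QWRcopr$ has already been shown to be a morphism of $R$-algebras in the preceding lemma. Alternatively, I can deduce it here by factoring
\[
\QWRcopr = (\id_{\QW} \ot \incRWtoQW) \circ (\id_\qQ \ot \RWcopr),
\]
where the first map has just been proved to be an $R$-algebra morphism, and the second is the tensor product of $\id_{\QW}$ with the $R$-algebra inclusion $\incRWtoQW$, hence an $R$-algebra morphism with respect to the tensor product algebra structures on $\QW \ot R[W]$ and $\QW \ot \QW$. The composition is therefore a morphism of $R$-algebras, completing the proof.

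There is no real obstacle here: the only subtle point is being careful about which algebra structure is placed on $\QW \ot R[W]$ (namely, the one using the twisted product on $\QW$ and the group algebra product on $R[W]$), after which everything reduces to the short computation above.
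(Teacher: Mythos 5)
Your proposal is correct and follows essentially the same route as the paper: the first claim is verified by direct computation on the elements $q\de_w$, and the second follows because the remaining map in the composition defining $\QWRcopr$ (namely $\id_{\QW}\ot\incRWtoQW$) is a morphism of $R$-algebras. Your explicit identification of the tensor-product algebra structure on $\QW\ot R[W]$ is exactly the point the paper leaves implicit.
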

\begin{proof}
The first claim can be checked directly on elements $q\de_w$, and 
the second one follows since the second map in the composition defining $\QWRcopr$ is a morphism of $R$-algebras.
\end{proof}

\begin{rem} Observe that
the product in $\QW$ is, therefore, a morphism of $R$-coalgebras (without units), since it is the same condition. Similarly, the unit of $\QW$ preserves coproducts. 
\end{rem}

We now define a coproduct on $\QW$ viewed as a \emph{left} $\qQ$-module. Beware that $\qQ$ is not central in $\QW$, so $\QW$ is not a $\qQ$-algebra in the usual sense. The convention is still that unlabeled tensor products are over $R$. 

Note that the tensor product $\QW \ot_{\qQ} \QW$ of left
$\qQ$-modules has a natural structure of left $\qQ$-module via action $q\cdot (z\ot z')= qz\ot z'=z\ot q z'$.

Let $\Qproj:\QW \ot \QW \to \QW\ot_\qQ \QW$ be the projection map. It is a morphism of left $\qQ$-modules for both actions of $\qQ$ on the source, on the left or right factor. The $\qQ$-submodule $\Qker$ is generated by elements of the form $qz\ot z' -z \ot qz' =(q\ot 1 -1 \ot q) \cdot (z\ot z')$.
Let $\Qinc=\id_{\QW} \ot \incRWtoQW$. The morphism of left
$\qQ$-modules 
$$
\Qiso = \Qproj \circ \Qinc\colon \QW \ot R[W] \to \QW
\ot_{\qQ} \QW
$$ 
is an isomorphism. 

\begin{defi} \label{productQWQW_defi}
We define the product $\QWprod$ on $\QW \ot_\qQ \QW$ by transporting the product of the tensor product algebra $\QW \ot R[W]$ through the isomorphism $\Qiso$, i.e. 
$$z\odot z':=\Qiso(\Qiso^{-1}(z)\cdot \Qiso^{-1}(z'))\qquad \text{for $z,z'$ in $\QW\ot_\qQ \QW$.}$$
\end{defi}

\begin{rem}
Note that the morphism $\Qiso$ is a morphism of $R$-algebras, and so is $\Qinc$, but $\Qproj$ does \emph{not} respect products in general. We therefore have an inclusion $\Qinc$ of $R$-algebras, that is split by $\Qiso^{-1}\circ \Qproj$, a map of $\qQ$-modules, but not a map of $R$-algebras. Still, $\Qproj$ restricted to $\im(\Qinc)$ is a morphism of $R$-algebras, because it is the inverse of the morphism of $R$-algebras $\Qinc$.
\end{rem}

\begin{defi}
We define a coproduct $\QWcopr\colon \QW \to \QW \ot_{\qQ} \QW$ on $\QW$ viewed as a $\qQ$-module to be the morphism of left $\qQ$-modules given by the composition $\Qproj \circ \QWRcopr=\Qiso \circ (\id_{\qQ} \ot \RWcopr)$.
\end{defi}
We therefore have $\QWcopr(q \delta_w)=q \delta_w \otimes \delta_w= \delta_w \otimes q\delta_w$.

\begin{prop}\label{QWcocomalg_prop}
The coproduct $\QWcopr$ is $\qQ$-linear, associative and
cocommutative, with counit $\QWcoun:\QW \to \qQ$ defined by $x \mapsto x(1)$ (action of $\QW$ on $\qQ$ defined in section \ref{sec:invar}).
Therefore, $\QW$ is a cocommutative coalgebra in the category of left $\qQ$-modules
\end{prop}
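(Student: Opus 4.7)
The plan is to verify everything on the canonical basis elements $q\de_w$ using the key formula
\[
\QWcopr(q\de_w) = q\de_w \ot \de_w = \de_w \ot q\de_w \quad \text{in } \QW \ot_\qQ \QW,
\]
where the first equality is immediate from the definition $\QWcopr = \Qiso\circ(\id_\qQ \ot \RWcopr)$ applied to $q\de_w$, and the second is the defining relation of the tensor product over $\qQ$ (noting that $\qQ$ is commutative so we can freely identify the left and right $\qQ$-module structures on each tensor factor). Once this formula is in hand, each of the four claimed properties becomes essentially a one-line check that reduces to the corresponding well-known property of the cocommutative group-ring coproduct $\RWcopr$.

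For $\qQ$-linearity, both factors in $\QWcopr = \Qiso \circ (\id_\qQ \ot \RWcopr)$ are left $\qQ$-linear: the second by $R$-linearity of $\RWcopr$ on the group-ring factor, and the first because $\Qiso$ is a morphism of left $\qQ$-modules. For coassociativity, applying either $(\QWcopr \ot \id) \circ \QWcopr$ or $(\id \ot \QWcopr) \circ \QWcopr$ to $q\de_w$ yields $q\de_w \ot \de_w \ot \de_w$ in $\QW \ot_\qQ \QW \ot_\qQ \QW$. For cocommutativity, since $\qQ$ is commutative, the flip $\tau(z\ot z') = z'\ot z$ is a well-defined endomorphism of $\QW \ot_\qQ \QW$, and
\[
\tau \circ \QWcopr(q\de_w) = \tau(q\de_w \ot \de_w) = \de_w \ot q\de_w = q\de_w \ot \de_w = \QWcopr(q\de_w).
\]

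For the counit, $\QWcoun(q\de_w) = (q\de_w)(1) = q \cdot w(1) = q$, and $\qQ$-linearity is immediate since $\QWcoun(q'\cdot q\de_w) = \QWcoun(q'q \de_w) = q'q$. Applying $(\QWcoun \ot \id)$ to $\QWcopr(q\de_w) = q\de_w \ot \de_w$ gives $q \ot \de_w$, which equals $q\de_w$ under the canonical isomorphism $\qQ \ot_\qQ \QW \simeq \QW$; and $(\id \ot \QWcoun)$ applied to $\de_w \ot q\de_w$ gives $\de_w \ot q$, which equals $q\de_w$ under $\QW \ot_\qQ \qQ \simeq \QW$.

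No step is really an obstacle, but the one point requiring care is keeping the $\qQ$-bimodule bookkeeping straight: $\qQ$ is not central in $\QW$, so the identification $q\de_w \ot \de_w = \de_w \ot q\de_w$ does not say $q\de_w = \de_w q$ in $\QW$ (that is false, since $\de_w q = w(q)\de_w$); it holds only after projecting to the balanced tensor product. Once the basis formula is established and this subtlety noted, the coassociativity, cocommutativity and counit diagrams are formally the same as those for the cocommutative group-ring coproduct on $R[W]$.
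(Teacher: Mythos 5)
Your proof is correct and follows the same route as the paper: the paper's proof simply observes that all maps are morphisms of left $\qQ$-modules, so everything can be checked on the basis elements $\de_w$, where the properties are immediate. You spell out the basis computation $\QWcopr(q\de_w)=q\de_w\ot\de_w=\de_w\ot q\de_w$ and the resulting verifications in more detail, including the correct caveat that the identification holds only in the balanced tensor product and not in $\QW$ itself.
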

\begin{proof}
Since all maps involved are morphisms of left $\qQ$-modules, it suffices to check it on $\qQ$-basis elements $\de_w$ of $\QW$. All three properties are then straightforward.
\end{proof}

\begin{rem} \label{counitequalsaction_rem}
The counit $\QWcoun$ satisfies that for any $q\in \qQ$ and any $x\in \QW$, we have $\QWcoun(xq)=x(q)$.
\end{rem}

\begin{lem}
The coproduct $\QWcopr$ is a morphism of $R$-algebras.
\end{lem}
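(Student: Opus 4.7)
The plan is to observe that $\QWcopr$ is a composition of two maps that are already known (or essentially tautologically) to be morphisms of $R$-algebras, so that the conclusion is immediate. Recall from the definition that $\QWcopr = \Qiso \circ (\id_\qQ \ot \RWcopr)$, where the target $\QW \ot_\qQ \QW$ has been equipped with the product $\QWprod$ introduced in Definition~\ref{productQWQW_defi}.

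First, I would invoke the preceding lemma stating that $\id_\qQ \ot \RWcopr \colon \QW \to \QW \ot R[W]$ is a morphism of $R$-algebras, where $\QW \ot R[W]$ carries the usual tensor-product algebra structure. Second, I would note that $\Qiso \colon \QW \ot R[W] \to \QW \ot_\qQ \QW$ is an isomorphism of $R$-algebras; indeed this is essentially the definition of the product $\QWprod$, since $\QWprod$ was introduced precisely by transporting the product on $\QW \ot R[W]$ through $\Qiso$. Composing these two $R$-algebra morphisms yields that $\QWcopr$ is a morphism of $R$-algebras, as desired.

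There is no real obstacle here: the substantive work was already done in Definition~\ref{productQWQW_defi}, where the product on $\QW \ot_\qQ \QW$ was engineered to make $\Qiso$ a ring isomorphism, and in the earlier lemma verifying that $\id_\qQ \ot \RWcopr$ preserves multiplication. The only point worth emphasizing is that one must not try to prove the statement using the alternative expression $\QWcopr = \Qproj \circ \QWRcopr$, since $\Qproj$ does not preserve products on all of $\QW \ot \QW$; it is only its restriction to $\im(\Qinc)$ that is a ring map, and this restriction is exactly $\Qiso^{-1}$ pre-composed with $\Qinc$, bringing us back to the composition above.
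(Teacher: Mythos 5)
Your proof is correct and follows exactly the paper's argument: the paper likewise writes $\QWcopr=\Qiso \circ (\id_{\qQ} \ot \RWcopr)$ and observes that both maps are morphisms of $R$-algebras. Your additional remark about avoiding the decomposition through $\Qproj$ is a sensible clarification but not needed beyond that.
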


\begin{proof}
We have $\QWcopr=\Qiso \circ (\id_{\qQ} \ot \RWcopr)$ and both maps are maps of $R$-algebras.
\end{proof}

\begin{lem} \label{kerleftstable_lem}
The kernel $\Qker$ is a right ideal, and is also stable by multiplication on the left by elements in the image of $\QWRcopr$. In particular $\Qker \cap \im(\QWRcopr)$ is a double-sided ideal of the $R$-subalgebra $\im(\QWRcopr)$.
\end{lem}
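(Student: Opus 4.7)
The plan is to work directly from the $R$-linear generating set of $\Qker$ given in the text, namely elements of the form $(q\ot 1 - 1\ot q)\cdot(z\ot z')$ with $q\in \qQ$ and $z,z'\in \QW$. For the right ideal claim, right-multiplying such a generator by an arbitrary $y\ot y'\in \QW\ot \QW$ yields $(q\ot 1 - 1\ot q)\cdot(zy\ot z'y')$, which is again of the generating form; hence $\Qker$ is a right ideal of $\QW\ot \QW$.

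For stability under left multiplication by $\im(\QWRcopr)$, since that image is $R$-spanned by the elements $\QWRcopr(q\de_w)=q\de_w\ot \de_w$, it suffices by $R$-linearity to check that
\[
(q\de_w\ot \de_w)\cdot(q'\ot 1 - 1\ot q')\cdot(z\ot z')
\]
lies in $\Qker$ for any $q,q'\in \qQ$, $w\in W$, $z,z'\in \QW$. The central computation is the commutation identity
\[
(q\de_w\ot \de_w)\cdot(q'\ot 1 - 1\ot q') = (w(q')\ot 1 - 1\ot w(q'))\cdot(q\de_w\ot \de_w),
\]
which follows from the twisted product rule $\de_w q'=w(q')\de_w$ in $\QW$ together with commutativity of $\qQ$. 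Multiplying both sides on the right by $z\ot z'$ rewrites the original expression as $(w(q')\ot 1 - 1\ot w(q'))\cdot(q\de_w z\ot \de_w z')$, which is again of the displayed generating form for $\Qker$.

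For the last assertion, recall that $\QWRcopr$ is a morphism of $R$-algebras, so $\im(\QWRcopr)$ is an $R$-subalgebra of $\QW\ot \QW$. Combining the two previous properties, $\Qker\cap \im(\QWRcopr)$ is stable under left multiplication by $\im(\QWRcopr)$ (by the second property just established) and under right multiplication by $\im(\QWRcopr)\subseteq \QW\ot \QW$ (since $\Qker$ is a right ideal in the whole of $\QW\ot \QW$), hence it is a two-sided ideal of $\im(\QWRcopr)$. The only step that requires genuine verification is the commutation identity above, which is a direct one-line computation; everything else is bookkeeping with the generating set.
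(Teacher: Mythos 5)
Your proof is correct and follows essentially the same route as the paper's: both verify the right-ideal property directly on the additive generators $(q\ot 1-1\ot q)\cdot(z\ot z')$, and both reduce the left-stability claim to the commutation identity $(q\de_w\ot\de_w)(q'\ot 1-1\ot q')=(w(q')\ot 1-1\ot w(q'))(q\de_w\ot\de_w)$, which uses the twisted relation $\de_w q'=w(q')\de_w$ and the commutativity of $\qQ$ (this is exactly the paper's ``one has $q'=w(q)$''). The concluding bookkeeping for the two-sided ideal statement is likewise the same.
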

\begin{proof}
It is a right ideal, since it is additively generated by elements of the form $(q\ot 1 -1 \ot q)\cdot (z\ot z')$, with $q\in \qQ$ and $z,z'\in \QW$. To prove the second part of the claim, it suffices to prove that any element of the form $\QWRcopr(x)\cdot (q\ot 1-1\ot q)$ with $q \in \qQ$ and $x\in \QW$ is a sum of elements of the form $(q'\ot 1-1\ot q)\cdot \QWRcopr(x)$ for some $q'\in \qQ$ and $x'\in \QW$. It is enough to check it when $x=q_w\de_w$, in which case one has $q'=w(q)$.
\end{proof}
\begin{rem}
Note that this left stability uses the cocommutativity of $\RWcopr$ in an essential way.
\end{rem}

\begin{cor} \label{qprojbar_cor}
The $R$-submodule $\Qim+\Qker \subseteq \QW \ot \QW$ is an $R$-subalgebra and the morphism $\Qproj$ restricted to $\Qim+\Qker$ is a morphism of $R$-algebras. 
\end{cor}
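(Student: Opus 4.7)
The plan is to prove both assertions by straightforward algebraic manipulation, leveraging Lemma~\ref{kerleftstable_lem} together with the fact that $\Qim$ is an $R$-subalgebra of $\QW \ot \QW$ (since $\QWRcopr$ is an $R$-algebra morphism).

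First, to show $\Qim + \Qker$ is an $R$-subalgebra, I would expand a product $(a_1 + k_1)(a_2 + k_2)$ with $a_i \in \Qim$ and $k_i \in \Qker$ as $a_1 a_2 + a_1 k_2 + k_1 a_2 + k_1 k_2$ and check each summand lies in $\Qim + \Qker$. Specifically: $a_1 a_2 \in \Qim$ since $\Qim$ is a subalgebra; $a_1 k_2 \in \Qker$ by the left-stability of $\Qker$ under multiplication by elements of $\Qim$ from Lemma~\ref{kerleftstable_lem}; and $k_1 a_2, k_1 k_2 \in \Qker$ because $\Qker$ is a right ideal. The unit $1 \in \QW \ot \QW$ equals $\QWRcopr(1) \in \Qim$, so the subalgebra is unital.

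Next, I would show that $\Qproj|_{\Qim + \Qker}$ is an $R$-algebra morphism. The key observation is that $\Qim \subseteq \im(\Qinc)$: indeed, unpacking the definition $\QWRcopr = (\id_{\qQ \ot R[W]} \ot \incRWtoQW) \circ (\id_\qQ \ot \RWcopr)$ shows that any element of $\Qim$ has its right tensor factor lying in $\im(\incRWtoQW)$, hence lies in $\im(\Qinc)$. By the remark following Definition~\ref{productQWQW_defi}, $\Qproj$ restricted to $\im(\Qinc)$ is an $R$-algebra morphism, so its further restriction $\Qproj|_{\Qim}$ is one too. Then, for any $a_i \in \Qim$ and $k_i \in \Qker$, the three ``cross'' and ``kernel-kernel'' terms in the expansion above lie in $\Qker$, so
\[
\Qproj((a_1 + k_1)(a_2 + k_2)) = \Qproj(a_1 a_2) = \Qproj(a_1) \QWprod \Qproj(a_2) = \Qproj(a_1 + k_1) \QWprod \Qproj(a_2 + k_2),
\]
which gives the multiplicativity. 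The unit is trivially preserved.

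No step here looks like a real obstacle: everything is essentially bookkeeping built on Lemma~\ref{kerleftstable_lem}. The only mildly subtle point is checking the inclusion $\Qim \subseteq \im(\Qinc)$ in order to legitimately transfer the algebra-morphism property of $\Qproj|_{\im(\Qinc)}$ to $\Qproj|_{\Qim}$, but this inclusion is visible directly from the construction of $\QWRcopr$.
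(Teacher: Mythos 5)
Your proof is correct and follows exactly the route the paper intends: the corollary is stated without proof as an immediate consequence of Lemma~\ref{kerleftstable_lem} (the right-ideal property and left-stability of $\Qker$ under $\Qim$) together with the observation, recorded in the remark after Definition~\ref{productQWQW_defi}, that $\Qproj$ restricted to $\im(\Qinc)\supseteq\Qim$ is an $R$-algebra morphism. Your expansion of $(a_1+k_1)(a_2+k_2)$ and the verification that $\Qim\subseteq\im(\Qinc)$ are precisely the bookkeeping the authors leave to the reader.
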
 

\begin{prop}
For elements $y\in \QW \ot R[W]$ and $z \in \QW \ot \QW$, we have 
\begin{equation} \label{projright_eq}
\Qproj(z) \odot \Qiso(y) = \Qproj(z\cdot \Qinc(y))
\end{equation}
and if $ \Qinc(y)\cdot \Qker\subseteq \Qker$, we also have 
\begin{equation} \label{projleft_eq}
\Qiso(y) \odot \Qproj(z) = \Qproj(\Qinc(y) \cdot z).
\end{equation}
In particular, for any $x\in \QW$, we have
\begin{equation} \label{coprproj_eq}
\Qproj(z) \odot \QWcopr (x) = \Qproj\big(z\cdot \QWRcopr(x)\big) \quad\text{and}\quad \QWcopr(x) \odot \Qproj(z) = \Qproj\big(\QWRcopr(x)  \cdot z\big).
\end{equation}
\end{prop}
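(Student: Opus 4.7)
The plan is to unwind the definition of $\odot$, which says $a \odot b = \Qiso(\Qiso^{-1}(a) \cdot \Qiso^{-1}(b))$, and to use systematically the factorization $\Qiso = \Qproj \circ \Qinc$ together with the fact that $\Qinc$ is a morphism of $R$-algebras (while $\Qproj$ is only additive and kills $\Qker$). The key trick is to lift an arbitrary element of the form $\Qproj(z)$ to $\QW \ot R[W]$ by setting $y_0 := \Qiso^{-1}(\Qproj(z))$. By construction, $\Qproj(\Qinc(y_0)) = \Qiso(y_0) = \Qproj(z)$, so the difference $\Qinc(y_0) - z$ lies in $\Qker$.

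For \eqref{projright_eq}, I would apply this lift to the first factor: by definition of $\odot$,
\[
\Qproj(z) \odot \Qiso(y) = \Qiso(y_0 \cdot y) = \Qproj\bigl(\Qinc(y_0) \cdot \Qinc(y)\bigr),
\]
using that $\Qinc$ is a morphism of $R$-algebras. Writing $\Qinc(y_0) = z + (\Qinc(y_0) - z)$ and invoking the right-ideal property of $\Qker$ from Lemma~\ref{kerleftstable_lem}, the contribution of $(\Qinc(y_0) - z) \cdot \Qinc(y)$ lies in $\Qker$ and vanishes under $\Qproj$, yielding $\Qproj(z \cdot \Qinc(y))$.

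The argument for \eqref{projleft_eq} is symmetric. With the same $y_0$,
\[
\Qiso(y) \odot \Qproj(z) = \Qiso(y \cdot y_0) = \Qproj\bigl(\Qinc(y) \cdot \Qinc(y_0)\bigr),
\]
and now the error term is $\Qinc(y) \cdot (\Qinc(y_0) - z)$. This lies in $\Qker$ precisely under the extra assumption $\Qinc(y) \cdot \Qker \subseteq \Qker$, which is why this hypothesis is needed here but not in the right-handed version (left stability of $\Qker$ is \emph{not} automatic, contrary to right stability).

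Finally, for the specializations in \eqref{coprproj_eq}, I would take $y := (\id_\qQ \ot \RWcopr)(x) \in \QW \ot R[W]$. Then by the very definition of the two coproducts, $\Qinc(y) = \QWRcopr(x)$ and $\Qiso(y) = \QWcopr(x)$. Substituting into \eqref{projright_eq} gives the first identity of \eqref{coprproj_eq} immediately. For the second, the required left-stability $\QWRcopr(x) \cdot \Qker \subseteq \Qker$ is exactly the conclusion of the second part of Lemma~\ref{kerleftstable_lem}, so \eqref{projleft_eq} applies. No serious obstacle is expected: the genuine content has been isolated into Lemma~\ref{kerleftstable_lem}, and everything else is a bookkeeping exercise with $\Qiso = \Qproj \circ \Qinc$.
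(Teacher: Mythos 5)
Your proof is correct and follows essentially the same route as the paper: both arguments reduce $\Qproj(z)\odot\Qiso(y)$ to $\Qproj(\Qinc(\Qiso^{-1}\Qproj(z))\cdot\Qinc(y))$, observe that the lift differs from $z$ by an element of $\Qker$, and dispose of the error term using the right-ideal property of $\Qker$ (resp.\ the extra left-stability hypothesis, supplied by Lemma~\ref{kerleftstable_lem} in the specialization to $\QWRcopr(x)$). Your explicit naming of the lift $y_0$ and the error term $\Qinc(y_0)-z$ is just a slightly more verbose bookkeeping of the paper's one-line chain of equalities.
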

\begin{proof}
We have
\begin{multline*}
\Qproj(z)\odot \Qiso(y) = \Qiso(\Qiso^{-1}\Qproj(z)\cdot y) = \Qproj \Qinc(\Qiso^{-1}\Qproj(z)\cdot y) \\ 
= \Qproj(\Qinc \Qiso^{-1}\Qproj(z)\cdot \Qinc(y)) = \Qproj((z+\Qker)\cdot \Qinc(y)) = \Qproj(z\cdot \Qinc(y) + \Qker) = \Qproj(z\cdot \Qinc(y)).
\end{multline*}
Here we have used that $\Qker$ is a right ideal. Similarly,
reversing the product we obtain  \eqref{projleft_eq}.
For identity \eqref{coprproj_eq}, just set $y=(\id_\qQ \ot \RWcopr)(x)$ and use Lemma~\ref{kerleftstable_lem} to satisfy the extra assumption. 
\end{proof}

\section{Coproduct on the formal affine Demazure algebra}\label{sec:copraffine}

In the present section we define the coproduct
on the formal affine Demazure algebra $\DcF$ by restricting
the coproduct $\QWcopr$ on $\QW$ (see Theorem~\ref{Demcoprod_theo}) and
compute it for Demazure elements (see
Proposition~\ref{coproductXI_prop}).

\medskip

All results of the present section assume that 
the formal group algebra $\sS$ is $\RS$-regular (Definition~\ref{asmreg_defi}).

\begin{lem} \label{DDinjQWQW_lem}  The induced map $\DcF \ot_\sS \DcF
  \to \QW \ot_{\qQ} \QW$ of left $\sS$-modules is injective.
\end{lem}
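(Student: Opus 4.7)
The plan is to exploit the fact that, under the standing hypotheses, we have a common $W\times W$-indexed family $\{\Del_{I_v}\otimes \Del_{I_w}\}_{v,w\in W}$ that generates $\DcF\otimes_\sS \DcF$ and forms a basis of $\QW\otimes_\qQ \QW$, and then use that $\sS\hookrightarrow \qQ$.

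First I would recall the two ingredients: by Proposition~\ref{tDFcoincidence_prop}, $\DcF$ is free as a left $\sS$-module with basis $\{\Del_{I_w}\}_{w\in W}$, and by Corollary~\ref{Delbasis_cor}, $\QW$ is free as a left $\qQ$-module with the same basis $\{\Del_{I_w}\}_{w\in W}$. Both tensor products are formed over commutative rings using the left-module structure, with tensor relations $sz\otimes z'=z\otimes sz'$ (over $\sS$) and $qz\otimes z'=z\otimes qz'$ (over $\qQ$). In particular $\QW\otimes_\qQ \QW$ is free over $\qQ$ with basis $\{\Del_{I_v}\otimes \Del_{I_w}\}_{v,w\in W}$.

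Next, given an arbitrary element $x\in \DcF\otimes_\sS \DcF$, I would use the $\sS$-freeness of $\DcF$ together with the tensor relation to rewrite it as
\[
x=\sum_{v,w\in W} s_{v,w}\, \Del_{I_v}\otimes \Del_{I_w},\qquad s_{v,w}\in \sS.
\]
(We do not need to assert uniqueness of this expression in $\DcF\otimes_\sS \DcF$, only its existence.) The induced map sends $x$ to the same formal expression viewed in $\QW\otimes_\qQ \QW$. If this image vanishes, then because $\{\Del_{I_v}\otimes \Del_{I_w}\}$ is a $\qQ$-basis of $\QW\otimes_\qQ \QW$ we obtain $s_{v,w}=0$ in $\qQ$ for all $v,w$. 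By Lemma~\ref{localizationinj_lem}, $\sS\to \qQ$ is injective (which is applicable thanks to the $\Sigma$-regularity assumption), so $s_{v,w}=0$ in $\sS$, whence $x=0$.

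I do not expect a real obstacle here; the statement is essentially bookkeeping once Proposition~\ref{tDFcoincidence_prop} and Corollary~\ref{Delbasis_cor} are in hand. The only point that deserves care is that both tensor products are defined using the left-multiplication bimodule structure (via commutativity of $\sS$, respectively $\qQ$), so that the map $\DcF\otimes_\sS \DcF\to \QW\otimes_\qQ \QW$ is truly the canonical one sending $\Del_{I_v}\otimes \Del_{I_w}$ to $\Del_{I_v}\otimes \Del_{I_w}$, and that the injection $\sS\hookrightarrow \qQ$ extends coefficient-wise to the tensor products.
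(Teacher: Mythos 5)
Your proof is correct and follows essentially the same route as the paper: both rest on Proposition~\ref{tDFcoincidence_prop} (the $\sS$-basis $\{\Del_{I_w}\}$ of $\DcF$), Corollary~\ref{Delbasis_cor} (the same family as a $\qQ$-basis of $\QW$), and the injectivity of $\sS\to\qQ$ from Lemma~\ref{localizationinj_lem}. Your observation that only generation of $\DcF\otimes_\sS\DcF$ by the $\Del_{I_v}\otimes\Del_{I_w}$ is needed (rather than freeness) is a minor economy over the paper's phrasing, not a different argument.
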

\begin{proof}
The $\sS$-module $\DcF\ot_\sS \DcF$ is free with basis
$(\Del_{I_v} \ot \Del_{I_w})_{(v,w)\in W^2}$, by
Proposition~\ref{tDFcoincidence_prop}. On the other hand this basis is
also a $\qQ$-basis of $\QW \ot_{\qQ} \QW$ by Corollary~\ref{Delbasis_cor}. Since $\sS$ is $\Sigma$-regular, by
Lemma~\ref{localizationinj_lem}, $\sS$ injects in $\qQ$ and we are done.
\end{proof}

We can therefore identify $\DcF \ot_\sS \DcF$ with the
$\sS$-submodule image $\Qproj(\DcF\ot \DcF)$ in $\QW \ot_\qQ
\QW$. 
Beware, however, that through this identification, the product $\odot$
does \emph{not} correspond to the usual product of $\DcF \ot
\DcF$, and 
$\DcF \ot_\sS \DcF$ is \emph{not} stable by the product $\odot$.

By direct computation
we obtain in $\QW \ot \QW$ that $\QWRcopr(\Del_\al) =$
\begin{equation} \label{DelalphaQWQW_eq}
\begin{split}
& \Del_\al \ot 1 + 1\ot \Del_\al - x_\al \Del_\al \ot \Del_\al + (\frac{1}{x_{\al}} \ot 1 - 1 \ot \frac{1}{x_{\al}})(\de_{s_\al} \ot 1 - \de_{s_\al} \ot \de_{s_\al}) \\
& = \Del_\al \ot 1 + 1\ot \Del_\al - \Del_\al \ot x_\al \Del_\al  + (\frac{1}{x_\al} \ot 1 - 1 \ot \frac{1}{x_\al})(1 \ot 1 - 1 \ot \de_{s_\al}).
\end{split}
\end{equation}
It implies that in $\QW \ot_\qQ \QW$ we have
\begin{equation} \label{DelalphaQWQQW_eq}
\QWcopr(\Del_\al)=\Del_\al \ot 1 + 1\ot \Del_\al - x_\al \Del_\al \ot \Del_\al.
\end{equation}
 
\begin{theo}\label{Demcoprod_theo} 
The coproduct $\QWcopr\colon \QW \to \QW \ot_{\qQ} \QW$ restricts to an $\sS$-linear coproduct $\QWcopr\colon \DcF \to \DcF \ot_\sS \DcF$ with counit $\Dcoun \colon \DcF\to \sS$ obtained by restricting the counit $\QWcoun \colon \QW \to \qQ$. One has $\Dcoun(\Del_{I_w})=\delta_{w,1}$ (the Kronecker symbol).
\end{theo}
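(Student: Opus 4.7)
The proof splits into two claims: (A) $\QWcopr(\DcF) \subseteq \DcF \ot_\sS \DcF$, viewed as a submodule of $\QW \ot_\qQ \QW$ via the injection of Lemma~\ref{DDinjQWQW_lem}; and (B) $\QWcoun(\DcF) \subseteq \sS$ together with $\Dcoun(\Del_{I_w}) = \de_{w,1}$. Once these are in place, coassociativity, cocommutativity and the counit axiom for the restricted structure are inherited from the corresponding properties on $\QW$ (Proposition~\ref{QWcocomalg_prop}) by restriction along the injection.

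For (A), $\QWcopr$ is $\qQ$-linear (Proposition~\ref{QWcocomalg_prop}), hence $\sS$-linear. Since $(\Del_{I_w})_{w \in W}$ is an $\sS$-basis of $\DcF$ (Proposition~\ref{tDFcoincidence_prop}), it is enough to show $\QWcopr(\Del_I) \in \DcF \ot_\sS \DcF$ for every sequence $I = (i_1, \ldots, i_l)$ of simple reflections. I propose to establish the explicit formula
\[
\QWcopr(\Del_I) \,=\, \sum_{E_1, E_2 \subset [l]} p^I_{E_1, E_2}\, \Del_{I_{|E_1}} \ot \Del_{I_{|E_2}} \qquad \text{in } \QW \ot_\qQ \QW,
\]
with $p^I_{E_1, E_2} \in \sS$ as in Lemma~\ref{productform_lem}; each summand lies manifestly in $\DcF \ot_\sS \DcF$. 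To prove the identity, introduce the evaluation pairing
\[
\Psi \colon \QW \ot_\qQ \QW \to \Hom_R(\qQ \ot_R \qQ,\, \qQ),\qquad \Psi(a \ot b)(u \ot v) := a(u)\, b(v),
\]
well-defined on $\ot_\qQ$ because $\qQ$ is commutative. A direct check on the $R$-basis $\{q\de_w\}$ of $\QW$, using $\QWRcopr(q\de_w) = q\de_w \ot \de_w$ and that each $w \in W$ acts on $\qQ$ as a ring automorphism, yields $\Psi(\QWcopr(x))(u \ot v) = x(uv)$ for all $x \in \QW$ and $u, v \in \qQ$. Specialised to $x = \Del_I$ this is $\De_I(uv)$, which by Lemma~\ref{productform_lem} (valid verbatim on $\qQ$, as the formal Demazure operators and their Leibniz-type rule extend to the localisation) coincides with $\Psi$ applied to the right-hand side of the claimed formula. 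Injectivity of $\Psi$, deduced from linear independence of the Weyl-group automorphisms of $\qQ$ via a Dedekind-style argument applied successively to the two inputs, then forces the identity in $\QW \ot_\qQ \QW$.

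For (B), $\QWcoun(x) = x(1)$ is $\qQ$-linear (Remark~\ref{counitequalsaction_rem}), hence $\sS$-linear. For $w = 1$, $\Del_{I_w} = 1$ gives $\Dcoun(1) = 1$; for $w \neq 1$, writing $\Del_{I_w} = \Del_{i_1}\cdots \Del_{i_l}$ with $l \geq 1$ and using $\De_{i_l}(1) = 0$ yields $\Dcoun(\Del_{I_w}) = \De_{i_1} \cdots \De_{i_{l-1}}(\De_{i_l}(1)) = 0$. Hence $\Dcoun(\Del_{I_w}) = \de_{w,1}$, and $\Dcoun(\DcF) \subseteq \sS$ follows by $\sS$-linearity and the basis decomposition $\DcF = \bigoplus_w \sS \Del_{I_w}$. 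The main technical obstacle I expect is the injectivity of $\Psi$: one must verify that Dedekind-style independence of the Weyl automorphisms of $\qQ$ is available under the standing hypotheses on $R$ and $F$, most comfortably by passing to the total quotient ring when $\sS$ is a domain.
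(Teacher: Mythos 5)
Your overall strategy is sound but genuinely different from the paper's. The paper does not compute $\QWcopr(\Del_{I_w})$ explicitly at this stage: it uses that $\QWcopr$ is a morphism of $R$-algebras and shows inductively that $\QWcopr(\Del_\al)\odot\Qproj(z)$ stays in $\Qproj(\DcF\ot\DcF)$, the whole content being the bookkeeping of $\Qker$ via Lemma~\ref{kerleftstable_lem} and identity \eqref{coprproj_eq}; the explicit formula you aim for is only derived afterwards (Proposition~\ref{coproductXI_prop}, via Corollary~\ref{qprojbar_cor} and Lemma~\ref{prodformDD_lem}). Your route --- prove the formula first by comparing both sides under the evaluation pairing $\Psi$ against $\qQ\ot\qQ$ --- is attractive because it makes the link with the Leibniz-type formula \eqref{productform_eq} transparent and delivers Proposition~\ref{coproductXI_prop} as a byproduct; your verification that $\Psi(\QWcopr(x))(u\ot v)=x(uv)$ on the basis $q\de_w$ is correct, as is your treatment of the counit (the paper simply observes $\DcF\subseteq\tDcF$, so $x(1)\in\sS$).

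The step you flag yourself is, however, a genuine gap as written. Injectivity of $\Psi$ amounts to $\qQ$-linear independence of the maps $(u,v)\mapsto v'(u)w(v)$, hence ultimately of the Weyl-group automorphisms of $\qQ$ as $\qQ$-valued functions. Dedekind--Artin independence is a theorem about automorphisms of \emph{fields}; for a commutative ring with zero divisors distinct automorphisms need not be independent, and the standing hypotheses of this section ($\tor$ regular in $R$, $\sS$ being $\RS$-regular) do not make $\sS$ or $\qQ$ a domain, so your fallback of passing to the total quotient ring does not cover the generality of the theorem. The gap is repairable with the paper's own tools: the required independence is exactly what Theorem~\ref{DF=ED_theo} establishes (injectivity of $\mathrm{res}_S\circ\phi$), using the element $u_0$ with $\aug\De_{I_0}(u_0)=\tor$ and the triviality of the kernel of the matrix $\left(\De_{I_v}\De_{I_w}(u_0)\right)_{(v,w)}$ from Lemma~\ref{Deltamatrixinvert_lem}. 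Concretely, evaluating $\Psi(\cdot)\bigl(u\ot\De_{I_{w'}}(u_0)\bigr)$ for all $w'\in W$ and then repeating the argument in the first variable kills all coefficients; this two-variable scheme is precisely the one the paper runs later for Lemma~\ref{fundInc_lem}. With that substitution for the ``Dedekind-style'' claim, your proof is complete.
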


\begin{proof}
Since $\QWcopr$ is $\qQ$-linear and is a morphism of $R$-algebras, it suffices to show that $\QWcopr(\Del_\al)\odot \Qproj(z) \in \Qproj(\DcF \ot \DcF)$ for any root $\al$ and any $z \in \DcF \ot \DcF$ (in particular for $z=1\ot 1$). 
We thus compute 
\[
\begin{split}
\QWcopr(\Del_\al)\odot \Qproj(z) & \equalbyeq{coprproj_eq} \Qproj(\QWRcopr(\Del_\al)\cdot z) \\
 & \hspace{1ex}= \Qproj\big((1 \ot \Del_\al + \Del_\al \ot 1 -x_\al \Del_\al\ot \Del_\al+ \Qker)\cdot z \big) \\
& \hspace{-2.2ex}\equalby{Lem. \ref{kerleftstable_lem}} \Qproj\big((1 \ot \Del_\al + \De_\al\ot 1- x_\al \Del_\al \ot \Del_\al)\cdot z + \Qker\big) \\
& \hspace{1ex}= \Qproj\big((1 \ot \Del_\al + \De_\al\ot 1-x_\al \Del_\al \ot \Del_\al)\cdot z)
\end{split}
\]
and the last term is in $\Qproj(\DcF \ot \DcF)$, since the product is
term by term in $\QW \ot \QW$.

The counit $\QWcoun$ on $\QW$ obviously maps $\DcF$ to $\sS$ by remark \ref{counitequalsaction_rem} since $\DcF \subseteq \tDcF$. One has $\Dcoun(\Del_{I_w})=\De_{I_w}(1)=\delta_{w,1}$.
\end{proof}

Given a sequence $I$ in $[n]$ we want to provide a formula for the
coproduct of the element $\Del_I$. 
Relation \eqref{Qcommute_eq} 
\begin{equation} \label{commuteDelq_eq}
\Del_\al \cdot q = \De_\al(q) + s_\al(q)\cdot
\Del_\al,\quad \text{ for all }q\in \qQ\text{ and }\al\in \RS.
\end{equation}
generalizes as:
\begin{lem}\label{commut_rel}
For any sequence $I$ and for any $q \in Q$, we have
\begin{equation} 
\Del_I \cdot q = \sum_{E \subseteq [l]} \phi_{I,E}(q)\cdot \Del_{I_{|E}}
\end{equation}
where $\phi_{I,E}=B_1\circ \cdots \circ B_l$ with
$$B_j=
\begin{cases}
B_{i_j}^{(0)}, & \text{if $j\in E$}, \\
B_{i_j}^{(-1)} & \text{otherwise.}
\end{cases}$$
\end{lem}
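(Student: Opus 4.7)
The proof plan is induction on the length $l$ of the sequence $I$. The base case $l=1$ with $I=(i)$ is just the commutation formula \eqref{commuteDelq_eq}, since the two subsets of $[1]$ are $\emptyset$ and $\{1\}$, giving contributions $B_i^{(-1)}(q)\cdot 1 = \De_i(q)$ and $B_i^{(0)}(q)\cdot\Del_i = s_i(q)\Del_i$ respectively, exactly matching $\Del_i\cdot q = \De_i(q)+s_i(q)\Del_i$.

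For the inductive step, write $I=(i_1)\cup I'$ with $I'=(i_2,\ldots,i_l)$ and apply the induction hypothesis to $I'$:
\[
\Del_I\cdot q \;=\; \Del_{i_1}\cdot\sum_{E'\subseteq [l-1]}\phi_{I',E'}(q)\cdot\Del_{I'_{|E'}}.
\]
Then I would commute $\Del_{i_1}$ past each scalar $\phi_{I',E'}(q)\in\qQ$ using \eqref{commuteDelq_eq}, producing two terms per $E'$:
\[
\Del_{i_1}\cdot\phi_{I',E'}(q) \;=\; B_{i_1}^{(-1)}(\phi_{I',E'}(q)) + B_{i_1}^{(0)}(\phi_{I',E'}(q))\cdot \Del_{i_1}.
\]
Substituting back, the first term contributes $B_{i_1}^{(-1)}(\phi_{I',E'}(q))\cdot\Del_{I'_{|E'}}$ and the second contributes $B_{i_1}^{(0)}(\phi_{I',E'}(q))\cdot\Del_{i_1}\cdot\Del_{I'_{|E'}}$.

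The main bookkeeping step, which is the only place any care is required, is to match subsets of $[l]$ with the two cases. Every $E\subseteq [l]$ is uniquely of the form $E'+1$ (when $1\notin E$) or $\{1\}\cup(E'+1)$ (when $1\in E$), for a unique $E'\subseteq [l-1]$. In the first case $\Del_{I_{|E}}=\Del_{I'_{|E'}}$ and the operator applied at position $1$ is $B_{i_1}^{(-1)}$, giving the composite $\phi_{I,E}=B_{i_1}^{(-1)}\circ\phi_{I',E'}$; in the second case $\Del_{I_{|E}}=\Del_{i_1}\cdot\Del_{I'_{|E'}}$ and the operator is $B_{i_1}^{(0)}$, giving $\phi_{I,E}=B_{i_1}^{(0)}\circ\phi_{I',E'}$. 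In both cases this agrees with the prescribed recipe $B_1\circ\cdots\circ B_l$ applied to $q$, completing the induction.

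No serious obstacle is expected; the statement is essentially a ``linear'' analogue of Lemma~\ref{productform_lem} (with only two choices $B^{(-1)}$ and $B^{(0)}$ per index, since we are moving a single element $q$ rather than a product $uv$), and the same index-shift argument $E'\mapsto E'+1$ or $\{1\}\cup(E'+1)$ used there handles the combinatorics cleanly here.
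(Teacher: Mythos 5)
Your proof is correct and is exactly the argument the paper intends: its entire proof reads ``Induction using \eqref{commuteDelq_eq},'' and your base case, commutation of $\Del_{i_1}$ past the scalars, and the index-shift matching $E = E'+1$ versus $E = \{1\}\cup(E'+1)$ fill in precisely those details.
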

\begin{proof}
Induction using \eqref{commuteDelq_eq}.
\end{proof}

\begin{lem} \label{prodformDD_lem}
For any sequence $I=(i_1,\ldots,i_l)$  in $[n]$ we have in $\QW \ot \QW$
\begin{equation} \label{prodformDelI_eq}
\prod_{k=1}^l (\Del_{i_k} \ot 1 + 1 \ot \Del_{i_k} - x_{i_k} \Del_{i_k} \ot \Del_{i_k}) =\sum_{E_1,E_2\subset [l]} (p_{E_1,E_2}^I\Del_{I_{|E_1}}) \ot \Del_{I_{|E_2}}
\end{equation}
where the element $p_{E_1,E_2}^I\in \sS$ is defined in Lemma~\ref{productform_lem}.
\end{lem}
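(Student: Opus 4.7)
The plan is to proceed by induction on $l=|I|$, paralleling the proof of Lemma~\ref{productform_lem}. The base case $l=0$ is immediate: both sides equal $1\ot 1$, with the only contribution on the right-hand side coming from $(E_1,E_2)=(\emptyset,\emptyset)$ and $p^{\emptyset}_{\emptyset,\emptyset}=1$ (the empty composition applied to $1$).

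For the inductive step, set $I'=(i_2,\ldots,i_l)$ and multiply the inductive expression for $I'$ on the left by $C_1:=\Del_{i_1}\ot 1 + 1\ot \Del_{i_1} - x_{i_1}\Del_{i_1}\ot \Del_{i_1}$. After distributing the three summands of $C_1$, the basic tool for commuting coefficients past $\Del_{i_1}$ is \eqref{commuteDelq_eq}, which I rewrite as $\Del_{i_1}q = B^{(-1)}_{i_1}(q) + B^{(0)}_{i_1}(q)\Del_{i_1}$ for $q\in \qQ$. For each pair $(E_1,E_2)\subseteq [l]^2$ I then collect the coefficient of $\Del_{I_{|E_1}}\ot \Del_{I_{|E_2}}$ in the resulting expansion and check it matches the recursive prescription $p^I_{E_1,E_2}=B_1\bigl(p^{I'}_{E_1\cap\{2,\ldots,l\},\,E_2\cap\{2,\ldots,l\}}\bigr)$, where $B_1$ is the operator dictated by the position of $1$ relative to $E_1$ and $E_2$.

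Three of the four cases receive a single contribution. If $1\notin E_1\cup E_2$, only the $B^{(-1)}_{i_1}$-part of $(\Del_{i_1}\ot 1)(p^{I'}_{E_1,E_2}\Del_{I_{|E_1}}\ot\Del_{I_{|E_2}})$ contributes, yielding $B^{(-1)}_{i_1}(p^{I'}_{E_1,E_2})$. If $1\in E_1\setminus E_2$, only the $B^{(0)}_{i_1}$-part of the same product contributes, yielding $B^{(0)}_{i_1}(p^{I'}_{E_1\setminus\{1\},E_2})$. If $1\in E_1\cap E_2$, only the $B^{(0)}_{i_1}$-part of $(-x_{i_1}\Del_{i_1}\ot\Del_{i_1})(p^{I'}_{E_1\setminus\{1\},E_2\setminus\{1\}}\Del_{I_{|E_1\setminus\{1\}}}\ot\Del_{I_{|E_2\setminus\{1\}}})$ survives, producing $B^{(1)}_{i_1}\circ B^{(0)}_{i_1}(p^{I'}_{E_1\setminus\{1\},E_2\setminus\{1\}})$.

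The main obstacle is the remaining case $1\in E_2\setminus E_1$, where two contributions survive and must be combined: $(1\ot\Del_{i_1})(p^{I'}_{E_1,E_2\setminus\{1\}}\Del_{I_{|E_1}}\ot\Del_{I_{|E_2\setminus\{1\}}})$ yields $p^{I'}_{E_1,E_2\setminus\{1\}}$, while the $B^{(-1)}_{i_1}$-part of $(-x_{i_1}\Del_{i_1}\ot\Del_{i_1})$ applied to the same inductive summand yields $-x_{i_1}\De_{i_1}(p^{I'}_{E_1,E_2\setminus\{1\}})$. These combine via the identity $q-x_{i_1}\De_{i_1}(q)=s_{i_1}(q)$, immediate from $\De_{i_1}(q)=(q-s_{i_1}(q))/x_{i_1}$, giving exactly $B^{(0)}_{i_1}(p^{I'}_{E_1,E_2\setminus\{1\}})$, as required. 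This completes the induction.
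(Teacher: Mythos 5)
Your proof is correct and follows essentially the same route as the paper's: induction on $l$, left-multiplying the inductive expression for $I'=(i_2,\ldots,i_l)$ by the first factor, commuting coefficients via \eqref{commuteDelq_eq}, and in the case $1\in E_2\setminus E_1$ combining the two surviving contributions through $q-x_{i_1}\De_{i_1}(q)=s_{i_1}(q)$, exactly as the paper does implicitly in its final display. The only cosmetic difference is your base case $l=0$ versus the paper's $l=1$.
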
 
\begin{proof}
We prove the formula by induction on the length $l$ of $I$. For $l=1$, it holds by identity \eqref{DelalphaQWQQW_eq}. Let $I'=(i_2,\ldots,i_l)$. For fixed $E_1,E_2 \subseteq [l-1]$, and for $p \in \sS$ we have
\[
\begin{split}
& (1\ot \Del_{i_1} +\Del_{i_1} \ot 1 -x_{i_1} \Del_{i_1}\ot \Del_{i_1})\cdot \big((p\Del_{I'_{|E_1}})\ot \Del_{I'_{|E_2}}\big) \\ 
& = (p\Del_{I'_{|E_1}})\ot (\Del_{i_1}\Del_{I'_{|E_2}}) +(\Del_{i_1}p\Del_{I'_{|E_1}})\ot \Del_{I'_{|E_2}}-(x_{i_1}\Del_{i_1}p\Del_{I'_{|E_1}})\ot (\Del_{i_1}\Del_{I'_{|E_2}}) \\
& \hspace{-1ex}\equalbyeq{commuteDelq_eq} (p\Del_{I'_{|E_1}})\ot (\Del_{i_1}\Del_{I'_{|E_2}})+(\De_{i_1}(p)\Del_{I'_{|E_1}})\ot \Del_{I'_{|E_2}} + (s_{i_1}(p)\Del_{i_1}\Del_{I'_{|E_1}})\ot \Del_{I'_{|E_2}} \\ 
& \hspace{2ex}-(x_{i_1}\De_{i_1}(p)\Del_{I'_{|E_1}})\ot (\Del_{i_1}\Del_{I'_{|E_2}}) -(x_{i_1}s_{i_1}(p)\Del_{i_1}\Del_{I'_{|E_1}})\ot (\Del_{i_1}\Del_{I'_{|E_2}}) \\
& =(\De_{i_1}(p)\Del_{I'_{|E_1}})\ot \Del_{I'_{|E_2}}+ (s_{i_1}(p)\Del_{i_1}\Del_{I'_{|E_1}})\ot \Del_{I'_{|E_2}} \\ 
& \hspace{2ex}+ (s_{i_1}(p)\Del_{i_1}\Del_{I'_{|E_1}})\ot \Del_{I'_{|E_2}} -(x_{i_1}s_{i_1}(p)\Del_{i_1}\Del_{I'_{|E_1}})\ot (\Del_{i_1}\Del_{I'_{|E_2}}) 
\end{split}
\]
and the result follows by induction as in the end of the proof of Lemma~\ref{productform_lem}.
\end{proof}
 
\begin{prop}  [cf. Proposition 4.15 \cite{KK}] \label{coproductXI_prop}
For any $I=(i_1,\ldots,i_l)$ we have
\begin{equation} \label{coproductXI_eq}
\QWcopr(\Del_I)=\sum_{E_1,E_2\subset [l]} p_{E_1,E_2}^I\Del_{I_{|E_1}} \ot \Del_{I_{|E_2}}. 
\end{equation}
\end{prop}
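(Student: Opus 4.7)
The plan is to exploit the fact, established in section~\ref{sec:copr}, that $\QWcopr = \Qproj \circ \QWRcopr$ is a morphism of $R$-algebras, and so is $\QWRcopr$ itself. Therefore
\[
\QWcopr(\Del_I) \;=\; \Qproj\bigl(\QWRcopr(\Del_I)\bigr) \;=\; \Qproj\bigl(\QWRcopr(\Del_{i_1}) \cdots \QWRcopr(\Del_{i_l})\bigr),
\]
so it suffices to compute the product $\QWRcopr(\Del_{i_1}) \cdots \QWRcopr(\Del_{i_l})$ in $\QW \ot \QW$ modulo $\Qker$. First I would set $D_{i_k} := \Del_{i_k}\ot 1 + 1\ot \Del_{i_k} - x_{i_k}\Del_{i_k}\ot \Del_{i_k}$, and read off from \eqref{DelalphaQWQW_eq} that $\QWRcopr(\Del_{i_k}) = D_{i_k} + k_{i_k}$ for some $k_{i_k} \in \Qker$.

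Next, I would expand the product of the $\QWRcopr(\Del_{i_k})$ in $\QW \ot \QW$ and argue that every term containing at least one factor $k_{i_j}$ lies in $\Qker$. This is where Lemma~\ref{kerleftstable_lem} does all the work: any partial product of the form $\QWRcopr(\Del_{i_1}) \cdots \QWRcopr(\Del_{i_{j-1}})$ is again in $\im(\QWRcopr)$ (since $\QWRcopr$ is an $R$-algebra morphism), and left multiplication by such an element sends $\Qker$ into $\Qker$; the remaining factors $\QWRcopr(\Del_{i_{j+1}}) \cdots \QWRcopr(\Del_{i_l})$ then multiply on the right, and $\Qker$ is a right ideal, so the term stays in $\Qker$. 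Consequently,
\[
\Qproj\bigl(\QWRcopr(\Del_{i_1}) \cdots \QWRcopr(\Del_{i_l})\bigr) \;=\; \Qproj\bigl(D_{i_1} \cdots D_{i_l}\bigr).
\]

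Finally, I would apply Lemma~\ref{prodformDD_lem}, which computes precisely $D_{i_1}\cdots D_{i_l} = \sum_{E_1,E_2\subset [l]} (p_{E_1,E_2}^I \Del_{I_{|E_1}}) \ot \Del_{I_{|E_2}}$ in $\QW \ot \QW$. Projecting to $\QW \ot_\qQ \QW$ and using that $p_{E_1,E_2}^I \in \sS \subseteq \qQ$ may be moved across the $\qQ$-tensor yields the announced formula. The main bookkeeping obstacle is the kernel-stability step, but it follows directly from Lemma~\ref{kerleftstable_lem} once one checks that after each substitution $\QWRcopr(\Del_{i_k}) = D_{i_k} + k_{i_k}$ and distributive expansion, the mixed terms sit to the right of a product in $\im(\QWRcopr)$ and to the left of an arbitrary element of $\QW\ot\QW$.
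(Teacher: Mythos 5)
Your argument is correct and is essentially the paper's own proof: both reduce the computation to the product of the three-term expressions $\Del_{i_k}\ot 1+1\ot\Del_{i_k}-x_{i_k}\Del_{i_k}\ot\Del_{i_k}$ in $\QW\ot\QW$ via Lemma~\ref{prodformDD_lem}, after justifying that the terms in $\Qker$ may be discarded. The only cosmetic difference is that the paper packages your kernel-stability bookkeeping into Corollary~\ref{qprojbar_cor} (that $\Qim+\Qker$ is a subalgebra on which $\Qproj$ is multiplicative) and works with the $\odot$-product downstairs, whereas you unwind the same content of Lemma~\ref{kerleftstable_lem} by hand upstairs in $\QW\ot\QW$.
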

\begin{proof}
We have
\[
\begin{split}
& \QWcopr(\Del_I)=\QWcopr(\Del_{i_1})\odot\cdots\odot \QWcopr(\Del_{i_l}) = \Qproj \QWRcopr(\Del_{i_1})\odot \cdots \odot \Qproj \QWRcopr(\Del_{i_l}) \\
& \equalbyeq{DelalphaQWQQW_eq} \Qproj (1\ot \Del_{i_1} +\Del_{i_1} \ot 1 - x_{i_1} \Del_{i_1} \ot \Del_{i_1}) \odot \cdots \odot \Qproj (1\ot \Del_{i_l} +\Del_{i_l} \ot 1 - x_{i_l} \Del_{i_l} \ot \Del_{i_l}) \\
& \equalby{Cor. \ref{qprojbar_cor}} \Qproj\big((1\ot \Del_{i_1} +\Del_{i_1} \ot 1 - x_{i_1} \Del_{i_1} \ot \Del_{i_1}) \cdots (1\ot \Del_{i_l} +\Del_{i_l} \ot 1 - x_{i_l} \Del_{i_l} \ot \Del_{i_l}) \big) \\
& \equalbyeq{prodformDelI_eq}\sum_{E_1,E_2\subset [l]} p_{E_1,E_2}^I\Del_{I_{|E_1}} \ot \Del_{I_{|E_2}}.\mbox{\qedhere} 
\end{split} 
\]
\end{proof}
\begin{rem}
Note that the only subtlety in the proof of the previous formula is to check that products of elements $\QWcopr(\Del_i)$ can indeed be computed term-wise in the tensor product, which is the content of Corollary~\ref{qprojbar_cor}.
\end{rem}
 
\section{Coproduct on the algebra of Demazure operators}\label{sec:compare}

In the present section we lift the coproduct on $\eD$ defined in \cite{CPZ} to a natural coproduct on $\ED$, and show it is isomorphic to the coproduct $\QWcopr$ on $\DcF$ through the isomorphism $\phi_{\DcF}$ of Theorem \ref{DF=ED_theo}.
\medskip

Unless otherwise specified, all tensor products are still over $R$, all $\Hom$ and $\End$ groups are homomorphisms of $R$-modules. When $M$ and $N$ are topological $R$-modules, let $\Homc(N,M)$ (resp. $\Endc(M)$) denote the $R$-submodule of $\Hom(N,M)$ (resp. of $\End(M)$) of continuous homomorphisms. When $M$ is a topological module over $\sS$, and $R$ acts through $S$, $\Hom(N,M)$ has an obvious structure of left $\sS$-module, by $s\cdot f= (x \mapsto sf(x))$. Then $\Homc(N,M)$ is an $\sS$-submodule of $\Hom(N,M)$, and in particular $\Endc(\sS)$ is an $\sS$-submodule of $\End(\sS)$. By construction, $\ED$ is an $\sS$-submodule of $\Endc(\sS)$. Let $\Inc_1: \ED \to \Endc(\sS)$ denote the inclusion.

Let $\sS \otimes \sS$ (resp. $\sS \otimes \sS \otimes \sS$) be endowed with the topology defined by the $\IF^i\otimes \IF^j$, $i,j\in \N$ (by the $\IF^i\otimes \IF^j \otimes \IF^k$, $i,j,k\in \N$) as a basis of neighborhoods of zero. 
Let 
$$\begin{array}{rclc}
\Inc_2: & \ED \otimes_S \ED & \to & \Homc(\sS \otimes \sS,\sS) \\
 & D_1 \otimes D_2 & \mapsto & ((r\otimes s) \mapsto D_1(r)D_2(s))
\end{array}$$ 
and similarly
$$\begin{array}{rclc}
\Inc_3: & \ED \otimes_S \ED \otimes_S \ED & \to & \Homc(\sS \otimes \sS \otimes \sS,\sS) \\
 & D_1 \otimes D_2 \otimes D_3 & \mapsto & ((r\otimes s \otimes t) \mapsto D_1(r)D_2(s)D_3(t)).
\end{array}$$ 

\begin{lem} \label{fundInc_lem}
If the torsion index $\tor$ is regular in $R$, the maps $\Inc_1, \Inc_2$ and $\Inc_3$ are injective.
\end{lem}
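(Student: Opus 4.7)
The plan is to dispatch $\Inc_1$ as a triviality and then treat $\Inc_2$ and $\Inc_3$ by a uniform coefficient-extraction argument based on the basis $\{\De_{I_w}\}_{w\in W}$ of $\ED$ as a left $\sS$-module (from Theorem~\ref{DF=ED_theo} combined with Proposition~\ref{tDFcoincidence_prop}, or equivalently the remark immediately preceding this lemma) and on the fact that the matrix
\[
M := \bigl(\De_{I_v}\De_{I_w}(u_0)\bigr)_{(v,w)\in W\times W}
\]
has trivial kernel in $\sS$ by Lemma~\ref{Deltamatrixinvert_lem} (the regularity of $\tor$ in $R$ passes to $\sS$, since $\sS$ is a power series ring over $R$).

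Injectivity of $\Inc_1$ requires nothing: by construction $\ED \subseteq \Endc(\sS)$ and $\Inc_1$ is just this inclusion.

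For $\Inc_2$, since $\sS$ is commutative, left scalars move freely across the tensor, so $\ED \otimes_\sS \ED$ is a free $\sS$-module with basis $\{\De_{I_w} \otimes \De_{I_v}\}_{(w,v)\in W\times W}$. Suppose $X = \sum_{w,v} a_{w,v} \De_{I_w} \otimes \De_{I_v}$ with $a_{w,v}\in \sS$ satisfies $\Inc_2(X) = 0$, i.e.\
\[
\sum_{w,v} a_{w,v} \De_{I_w}(r) \De_{I_v}(s) = 0 \quad\text{for all } r,s\in \sS.
\]
Specializing $r = \De_{I_{w'}}(u_0)$ and $s = \De_{I_{v'}}(u_0)$ turns this into
\[
\sum_v \Bigl(\sum_w a_{w,v} M_{w,w'}\Bigr) M_{v,v'} = 0 \quad\text{for all } w',v'\in W.
\]
Triviality of $\ker M$ applied to the column indexed by $v$ yields $\sum_w a_{w,v} M_{w,w'} = 0$ for all $v,w'$, and a second application of the same non-degeneracy gives $a_{w,v} = 0$ for all $w,v$.

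The argument for $\Inc_3$ is formally the same with three indices: expand $X = \sum_{u,v,w} a_{u,v,w}\De_{I_u}\otimes \De_{I_v}\otimes \De_{I_w}$, specialize the three arguments to $\De_{I_{u'}}(u_0)$, $\De_{I_{v'}}(u_0)$, $\De_{I_{w'}}(u_0)$, and unwind by three consecutive applications of the triviality of $\ker M$. The only substantive ingredient is the non-degeneracy of $M$, already provided by Lemma~\ref{Deltamatrixinvert_lem}; no new obstacle appears. The main piece of care, rather than difficulty, is organizing the indices in the iterated specialization so that the vanishing propagates cleanly through all three summations.
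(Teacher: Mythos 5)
Your proposal is correct and follows essentially the same route as the paper: identify $\Inc_1$ as the defining inclusion, expand in the $\sS$-basis $\{\De_{I_w}\}_{w\in W}$ of $\ED$, specialize the arguments to the elements $\De_{I_{w'}}(u_0)$, and apply the triviality of the kernel of the matrix from Lemma~\ref{Deltamatrixinvert_lem} once per tensor factor. The only cosmetic difference is that you specialize both (all three) arguments at once and then unwind, while the paper specializes one argument at a time, keeping the others generic.
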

\begin{proof}
By definition, $\Inc_1$ is an inclusion. For $\Inc_2$, by \cite[Theorem 4.11]{CPZ}, once reduced sequences $I_w$ have been chosen for all $w \in W$, the family $\{\De_{I_w}\}_{w\in W}$ forms a basis of $\ED$ as an $\sS$-module. Let us assume that $\sum_{v,w} a_{v,w} \De_{I_v} \otimes \De_{I_w}$ is mapped to the zero morphism. Thus, for any $r,s \in \sS$, we have $\sum_w \sum_v a_{v,w} \De_{I_v}(r)\De_{I_w}(s)=0$. Replacing successively $s$ by $\De_{I_{w'}}(u_0)$ for all $w'\in W$, we obtain that for any $w'$, $\sum_w \zeta_w(r) \De_{I_w}(\De_{I_{w'}}(u_0))=0$ with $\zeta_w(r)=\sum_v a_{v,w} \De_{I_v}(r)$. So, given $w$, for any $r \in \sS$,  we have $\zeta_w(r)=0$ by Lemma \ref{Deltamatrixinvert_lem}. Replacing $r$ successively by $\De_{I_{v'}}(u_0)$ for all $v'\in W$, we similarly obtain $a_{v,w}=0$. 

The proof for $\Inc_3$ is the same, but with one extra step for the last factor.
\end{proof}
Another way of phrasing the previous lemma is that an element of $\ED\otimes_\sS \ED$ is characterized by how it acts on $\sS\otimes \sS$ (with values in $\sS$), and similarly with three factors. 

\begin{lem} \label{augmDaction_lem}
If $\tor$ is regular in $R$, then the preimage by $\Inc_1$ of $\Homc(\sS,\IF)$ is $\IF \ED$.
\end{lem}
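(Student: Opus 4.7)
The plan is to use the explicit $\sS$-basis $\{\De_{I_w}\}_{w\in W}$ of $\ED$ recalled in the proof of Lemma~\ref{fundInc_lem} (from \cite[Thm.~4.11]{CPZ}) together with appropriate test elements in $\sS$ built from the element $u_0\in\IF^N$ of \S\ref{sec:invar}. The inclusion $\IF\ED\subseteq \Inc_1^{-1}(\Homc(\sS,\IF))$ is immediate, since $\sS$ acts on $\sS$ by multiplication: if $D=\sum_w a_w\De_{I_w}$ with every $a_w\in\IF$, then $D(\sS)\subseteq\IF\cdot\sS=\IF$. For the reverse inclusion, write an arbitrary $D=\sum_w a_w\De_{I_w}\in\ED$ with $D(\sS)\subseteq\IF$; since $\IF=\ker\aug$, the goal reduces to showing $\aug(a_w)=0$ for every $w\in W$.

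For each $w\in W$, fix a reduced sequence $J_w$ of $w^{-1}w_0$ (of length $N-\ell(w)$) and set $r_w:=\De_{J_w}(u_0)\in\sS$. Applying $D$ to $r_w$ and augmenting the resulting relation $D(r_w)\in\IF$ gives, for every $w\in W$,
\[
0=\sum_{v\in W}\aug(a_v)\,M_{v,w},\qquad M_{v,w}:=\aug\bigl(\De_{I_v J_w}(u_0)\bigr)\in R.
\]
The heart of the argument is the analysis of the matrix $M=(M_{v,w})$ using that $|I_v J_w|=\ell(v)+N-\ell(w)$, together with the defining properties of $u_0$ and the stability $\De_i(\IF^m)\subseteq \IF^{m-1}$. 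When $\ell(v)<\ell(w)$, the operator $\De_{I_v J_w}$ sends $u_0\in\IF^N$ into $\IF^{\ell(w)-\ell(v)}\subseteq \IF$, forcing $M_{v,w}=0$. When $\ell(v)=\ell(w)$, the concatenation has length $N$ and represents $v\cdot w^{-1}w_0$, which equals $w_0$ precisely when $v=w$, in which case the expression is automatically reduced; hence $M_{v,w}=\tor\cdot\delta_{v,w}$. The entries for $\ell(v)>\ell(w)$ are uncontrolled, but irrelevant for what follows.

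Ordering the elements of $W$ by non-decreasing length, $M$ becomes lower-triangular with every diagonal entry equal to $\tor$, so $\det(M)=\tor^{|W|}$ is regular in $R$ by hypothesis. Writing the relations above as $\bar a^T M=0$ with $\bar a:=(\aug(a_v))_{v\in W}$ and multiplying on the right by the adjugate of $M$ yields $\det(M)\,\bar a=0$, and regularity forces $\bar a=0$. Hence every $a_w$ lies in $\IF$, proving $D\in\IF\ED$.

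The main obstacle is locating the right test elements $r_w$ so that $M$ acquires a usable triangular structure with $\tor$ on the diagonal; this parallels the mechanism of Lemma~\ref{Deltamatrixinvert_lem}, but reorganized so that the decisive contribution now sits on the locus $\ell(v)=\ell(w)$ rather than on $\ell(v)+\ell(w)=N$.
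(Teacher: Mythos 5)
Your argument is correct, and it reaches the conclusion by a route that differs in substance from the paper's. The paper decomposes $d=\sum_w a_w \De_{I_w}$ just as you do, but then passes to $\FGR{R[\tfrac{1}{\tor}]}{\cl}{F}$, invokes Lemma~\ref{Deltamatrixinvert_lem} to invert the matrix $\left(\De_{I_v}\De_{I_w}(u_0)\right)_{v,w}$ there, concludes that each $a_w$ lies in the augmentation ideal $\IFR{R[1/\tor]}$, and finishes by the intersection $\IFR{R[1/\tor]}\cap \sS=\IF$ (this is where regularity of $\tor$ enters for the paper). You instead stay entirely over $R$: by testing against $\De_{J_w}(u_0)$ with $J_w$ reduced for $w^{-1}w_0$ rather than against $\De_{I_w}(u_0)$, you convert the usual anti-triangular pattern supported on $\ell(v)+\ell(w)=N$ into a genuinely triangular matrix with $\tor$ exactly on the diagonal, and then the adjugate identity plus regularity of $\tor^{|W|}$ kills the vector of augmented coefficients directly. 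Your version avoids both the base change and the intersection lemma, at the cost of redoing the combinatorics (the facts that $\ell(w^{-1}w_0)=N-\ell(w)$, that a length-$N$ word for $w_0$ is automatically reduced, and that $I_vJ_w$ with $\ell(v)=\ell(w)$, $v\neq w$, cannot represent $w_0$ --- all of which you handle correctly); the paper's version is shorter because it reuses Lemma~\ref{Deltamatrixinvert_lem} as a black box. Both proofs ultimately rest on the same input, namely the element $u_0\in\IF^N$ and the $\sS$-basis $\{\De_{I_w}\}_{w\in W}$ of $\ED$.
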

\begin{proof}
Let $d \in \ED$ such that $d(s) \in \IF$ for any $s\in \sS$. Decompose $d=\sum_w a_w \De_{I_w}$ by \cite[Theorem 4.11]{CPZ}. After inverting the torsion index in $R$, by Lemma \ref{Deltamatrixinvert_lem}, we obtain that for any $w \in W$, the element $a_w$ is in $\IFR{R[1/\tor]}$, the augmentation ideal of $\FGR{R[\tfrac{1}{\tor}]}{\cl}{F}$. Since $\IFR{R[1/\tor]} \cap \sS=\IF$ inside $\FGR{R[\tfrac{1}{\tor}]}{\cl}{F}$, we are done. 
\end{proof}

Let $\homcopr: \Endc(\sS) \to \Homc(\sS\otimes \sS,\sS)$ be the morphism of left $\sS$-modules sending $f \in \Endc(\sS)$ to $((u\otimes v) \mapsto f(uv))$. 

\begin{theo} \label{EDCopr_theo}
Assume that $\tor$ is regular. Along $\Inc_2$, the morphism $\homcopr$ factors (uniquely) through a morphism of left $\sS$-modules $\EDcopr: \ED \to \ED\otimes_\sS \ED$, which is thus the unique such morphism satisfying the formula 
\begin{equation} \label{coprodprod_eq}
\Inc_2(\EDcopr(f))(u\otimes v) = f(uv).
\end{equation}
It is an associative and commutative $\sS$-linear coproduct, with counit $\EDcoun$ mapping $f$ to $f(1)$ (evaluation at $1$).
\end{theo}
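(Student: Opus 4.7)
The plan is to transport the coproduct $\QWcopr$ on $\DcF$ from Theorem~\ref{Demcoprod_theo} along the isomorphism $\phi_{\DcF}\colon \DcF \tooby{\sim} \ED$ of Theorem~\ref{DF=ED_theo}, obtaining an $\sS$-linear morphism $\EDcopr\colon \ED \to \ED \ot_\sS \ED$, and then to verify the characterizing formula~\eqref{coprodprod_eq} on a suitable spanning family. Uniqueness of such a factorization is immediate from the injectivity of $\Inc_2$ recorded in Lemma~\ref{fundInc_lem}: if two elements of $\ED \ot_\sS \ED$ induce the same morphism $\sS \ot \sS \to \sS$, they coincide.

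For the key formula~\eqref{coprodprod_eq}, both sides are $\sS$-linear in $f$, so it suffices to check it on the $\sS$-spanning family $\{\De_I\}$ indexed by sequences $I$. For $I=(i_1,\ldots,i_l)$, the explicit formula of Proposition~\ref{coproductXI_prop} yields after transport
\[
\EDcopr(\De_I) = \sum_{E_1, E_2 \subseteq [l]} p^I_{E_1, E_2}\, \De_{I_{|E_1}} \ot \De_{I_{|E_2}},
\]
and Lemma~\ref{productform_lem} supplies precisely the matching identity in $\sS$: applying $\Inc_2$ and evaluating at $u \ot v$ gives $\sum_{E_1,E_2} p^I_{E_1,E_2}\, \De_{I_{|E_1}}(u)\, \De_{I_{|E_2}}(v) = \De_I(uv)$, as needed.

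The coalgebra axioms then transfer through $\phi_{\DcF}$ from the corresponding properties of $\QWcopr$ on $\DcF$ established in Proposition~\ref{QWcocomalg_prop} and Theorem~\ref{Demcoprod_theo}: $\sS$-linearity, associativity and cocommutativity are preserved by transport along an isomorphism of $\sS$-algebras. For the counit, Proposition~\ref{QWcocomalg_prop} identifies $\QWcoun(x) = x(1)$, and since $\phi_{\DcF}$ sends $x \in \DcF$ to its action on $\sS$, the transported counit is exactly $\EDcoun(f) = f(1)$. Alternatively, once the formula~\eqref{coprodprod_eq} is in hand, all these properties can also be verified directly by applying $\Inc_2$ or the analogous $\Inc_3$ (both injective by Lemma~\ref{fundInc_lem}) and invoking associativity and commutativity of multiplication on $\sS$ together with $f(u\cdot 1)=f(u)$.

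The main obstacle in the plan is proving that $\homcopr(\Inc_1(f))$ in fact lies in $\Inc_2(\ED \ot_\sS \ED)$ for every $f \in \ED$, i.e. that $\homcopr$ genuinely factors through $\Inc_2$. The combination of Proposition~\ref{coproductXI_prop} with Lemma~\ref{productform_lem} resolves this in one stroke by producing an explicit preimage on the $\sS$-spanning family $\{\De_I\}$; $\sS$-linearity then extends the factorization to all of $\ED$.
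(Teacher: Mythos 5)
Your proof is correct, and it reaches the conclusion by a somewhat different route than the paper on the one genuinely substantive point, namely the existence of the factorization. The paper disposes of that step in one line by citing \cite[Lemma 7.1 (1)]{CPZ} (which asserts that $\homcopr$ restricts), and then, exactly as you do for uniqueness, uses the injectivity of $\Inc_2$ and $\Inc_3$ from Lemma~\ref{fundInc_lem} to verify coassociativity, cocommutativity and the counit axiom directly from the characterizing formula \eqref{coprodprod_eq}. You instead produce an explicit preimage of $\homcopr(\De_I)$ under $\Inc_2$ from the generalized Leibniz formula of Lemma~\ref{productform_lem}, which is entirely internal to the paper and has the pleasant side effect of making Theorem~\ref{DF=DLambda_theo} (the compatibility of $\phi_{\DcF}$ with the two coproducts) essentially automatic, since the coefficients $p^I_{E_1,E_2}$ match those of Proposition~\ref{coproductXI_prop}. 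Two minor cautions. First, your transport argument for the coalgebra axioms implicitly needs $\QWcopr$ restricted to $\DcF$ to already be coassociative and cocommutative there, which requires the injectivity of $\DcF\ot_\sS\DcF\ot_\sS\DcF \to \QW\ot_\qQ\QW\ot_\qQ\QW$ (a three-factor extension of Lemma~\ref{DDinjQWQW_lem} that the paper does not state); your alternative direct verification via $\Inc_3$ avoids this and is the cleaner choice, and is what the paper does. Second, when you reduce \eqref{coprodprod_eq} to the spanning family $\{\De_I\}$ you should note that $\ED$ is indeed $\sS$-spanned by compositions of \emph{simple} Demazure operators (this follows from \cite[Theorem 4.11]{CPZ}, already invoked in Lemma~\ref{fundInc_lem}), since $\ED$ is a priori generated by the $\De_\al$ for all roots $\al$. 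Neither point is a gap, only a detail to make explicit.
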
 
\begin{proof}
Since $\ED \otimes_\sS \ED$ injects by $\Inc_2$ in $\Homc(\sS\otimes \sS,\sS)$, uniqueness is clear. Now $\homcopr$ restricts by \cite[Lemma 7.1 (1)]{CPZ}. 
Associativity follows from the fact that both compositions involved $\ED \to \ED \otimes_S \ED \otimes_S \ED$ composed with $\Inc_3$ send $D \in \ED$ to $((u,v,w) \mapsto D(uvw))$, so they are equal by injectivity of $\Inc_3$. Commutativity follows similarly with $\Inc_2$ instead of $\Inc_3$, and the fact that $\EDcoun$ is a (left) counit holds because $(\EDcoun \otimes \id_S)\circ \EDcopr(f)$ sends $f$ to $(v\mapsto f(1\cdot v)=f(v))$. It is also a right counit by the same argument on the right side.
\end{proof}

\begin{theo}\label{DF=DLambda_theo}
When the torsion index $\tor$ is regular in $R$ and $\sS$ is $\RS$-regular, the map $\phi_{\DcF}: \DcF \to \ED$ of Theorem \ref{DF=ED_theo} is an isomorphism of $\sS$-coalgebras with counits.
\end{theo}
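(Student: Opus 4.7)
The plan is to check the counit and coproduct compatibilities separately, since Theorem~\ref{DF=ED_theo} already supplies the $\sS$-module isomorphism. The counit compatibility is immediate from Theorems~\ref{Demcoprod_theo} and~\ref{EDCopr_theo}: both $\Dcoun(x)$ and $\EDcoun(\phi_{\DcF}(x))$ evaluate to $x(1)$. For the coproduct, the goal is $\EDcopr \circ \phi_{\DcF} = (\phi_{\DcF} \otimes_\sS \phi_{\DcF}) \circ \QWcopr$. Using that $\Inc_2$ is injective by Lemma~\ref{fundInc_lem} and then evaluating at a simple tensor $a \otimes b$, this reduces to the Leibniz-type identity
\[
x(ab) = \sum_i x_i^{(1)}(a)\cdot x_i^{(2)}(b), \qquad \text{where } \QWcopr(x) = \sum_i x_i^{(1)}\otimes x_i^{(2)},
\]
for every $x \in \DcF$ and $a,b \in \sS$.

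To establish this identity, I would lift the question from $\DcF$ acting on $\sS$ to $\QW$ acting on $\qQ$. Because $\Qiso\colon \QW \otimes_R R[W] \xrightarrow{\sim} \QW \otimes_\qQ \QW$ is an isomorphism of left $\qQ$-modules, every element $P$ of the target admits a unique decomposition $P = \sum_u c_u \otimes \delta_u$ with $c_u \in \QW$. This lets me define an $R$-bilinear pairing
\[
(\QW \otimes_\qQ \QW)\times (\qQ \otimes_R \qQ) \longrightarrow \qQ \otimes_R \qQ, \qquad P \cdot (a \otimes b) := \sum_u c_u(a) \otimes u(b).
\]
Using the explicit formula $(c \otimes \delta_v) \odot (c' \otimes \delta_u) = cc' \otimes \delta_{vu}$ that underlies $\odot$, a short calculation verifies $(P \odot Q)\cdot z = P\cdot(Q\cdot z)$ and $1 \cdot z = z$, so $\qQ \otimes_R \qQ$ is a left $(\QW \otimes_\qQ \QW, \odot)$-module. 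Pulling back along the $R$-algebra morphism $\QWcopr$ makes $\qQ \otimes_R \qQ$ into a left $\QW$-module, and the displayed identity is exactly the assertion that the multiplication $\mu\colon \qQ \otimes_R \qQ \to \qQ$ is a morphism of left $\QW$-modules.

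This $\QW$-linearity of $\mu$ is stable under $R$-linear combinations and, by associativity of the action together with the fact that $\QWcopr$ is an algebra morphism, also under products in $\QW$:
\[
\mu(\QWcopr(xy)\cdot z) = \mu(\QWcopr(x)\cdot (\QWcopr(y)\cdot z)) = x(\mu(\QWcopr(y)\cdot z)) = xy(\mu(z)).
\]
Hence it suffices to verify the identity on $R$-algebra generators of $\QW$, namely on elements of $\qQ$ and on the basis elements $\delta_s$, $s \in W$. For $q \in \qQ$ one has $\QWcopr(q) = q \otimes 1$, so $\mu(qa \otimes b) = qab = q(ab)$. For $\delta_s$ one has $\QWcopr(\delta_s) = \delta_s \otimes \delta_s$, so $\mu(s(a) \otimes s(b)) = s(a)s(b) = s(ab)$, because $s$ is a ring automorphism of $\qQ$. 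Restricting back to $\DcF \subseteq \QW$ and $\sS \subseteq \qQ$ yields the required identity. The main conceptual obstacle is the second paragraph: recognizing that $\qQ \otimes_R \qQ$ carries a left $(\QW \otimes_\qQ \QW, \odot)$-module structure via the unique $R[W]$-decomposition on the second factor (a naive factor-by-factor action is obstructed by the $\sS$-balancing of $\DcF \otimes_\sS \DcF$), and reformulating the target compatibility as $\QW$-linearity of $\mu$. Once this is in place, the entire proof sidesteps the explicit coproduct formulas of Proposition~\ref{coproductXI_prop} and reduces to two one-line checks on generators.
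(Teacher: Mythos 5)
Your proposal is correct, but it proves the key identity by a genuinely different mechanism than the paper. The paper's proof is a comparison of explicit structure constants: it invokes formula \eqref{coproductXI_eq}, which computes $\QWcopr(\Del_I)=\sum_{E_1,E_2}p^I_{E_1,E_2}\Del_{I_{|E_1}}\ot\Del_{I_{|E_2}}$, and formula \eqref{productform_eq}, which computes $\De_I(uv)=\sum_{E_1,E_2}p^I_{E_1,E_2}\De_{I_{|E_1}}(u)\De_{I_{|E_2}}(v)$, observes that the coefficients $p^I_{E_1,E_2}$ coincide, and concludes by the uniqueness clause of Theorem~\ref{EDCopr_theo}. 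Both of those formulas are themselves proved by parallel inductions on the length of $I$ (Lemma~\ref{productform_lem} and Proposition~\ref{coproductXI_prop}). You bypass both inductions: you prove the representation-independent Leibniz identity $x(ab)=\sum_i x_i^{(1)}(a)x_i^{(2)}(b)$ directly for \emph{all} $x\in\QW$ by endowing $\qQ\ot_R\qQ$ with a left $(\QW\ot_\qQ\QW,\odot)$-module structure via the canonical decomposition $\sum_u c_u\ot\de_u$ (correctly noting that a naive factor-wise action is not well defined over the balanced tensor product, though it becomes well defined after composing with $\mu$), so that the identity becomes $\QW$-linearity of $\mu$, is multiplicative in $x$, and reduces to the one-line checks on the generators $q\in\qQ$ and $\de_w$; you then restrict to $\DcF$ and invoke the same injectivity of $\Inc_2$ (Lemma~\ref{fundInc_lem}) and uniqueness from Theorem~\ref{EDCopr_theo}. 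Your route is shorter and conceptually cleaner, and it does not need $\DcF$ to be generated by $\sS$ and the $\Del_i$ since it works in all of $\QW$; what it gives up is the explicit coefficients $p^I_{E_1,E_2}$, which the paper needs anyway in section~\ref{sec:dualaffine} to describe the product on the dual basis of $\DcF^\star$, so for the paper's purposes the explicit formulas are not wasted effort. Both arguments ultimately rest on the same uniqueness principle, so the proofs of the counit compatibility and of the isomorphism statement itself coincide.
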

\begin{proof}
Since $\tor$ is regular, the $(\Del_{I_w})_{w\in W}$ are an $\sS$-basis of $\DcF$. The $(\De_{I_w})_{w \in W}$ are an $\sS$-basis of $\ED$ in any case. The identification of $\QWcopr$ and $\EDcopr$ through $\phi_{\DcF}$ then follows from formulas \eqref{coproductXI_eq} and \eqref{productform_eq}, via the uniqueness of Theorem \ref{EDCopr_theo}. 
\end{proof}

The coproduct that appears in \cite{CPZ} is then constructed as follows. The augmentation map $\aug:\RcF\to R$ induces a map of $R$-modules $\aug_*\colon \ED \to \Hom_R(S,R)$ given by $f\mapsto \aug \circ f$. Let $\eD$ be the image of $\ED$ by this map. 
As in the proof of \cite[Thm.~7.3]{CPZ} 
we define an $R$-linear coproduct 
\[
\TE \colon\eD \to \eD \otimes_R \eD
\]
by
\[
\TE (f)(u\otimes v):=f(uv),\quad f\in \eD,\; u,v\in\RcF.
\]
Here the property (which follows from \cite[Prop.~3.8]{CPZ})
\[
\aug\De_{i}(u\cdot v)=\aug\De_{i}(u)\cdot \aug(v)+\aug(u)\cdot
\aug\De_{i}(v),\quad u,v\in \RcF
\]
guarantees that the coproduct $\TE$ is well defined (cf. \cite[Lem.~7.1]{CPZ}) and 
\[
\TE(\aug\De_{i})=\aug\De_{i}\otimes \aug+\aug\otimes \aug\De_{i}.
\]

Through $\aug:\sS \to R$, we can restrict $\eD$ to an $\sS$-module. Then $\eD\otimes_R \eD = \eD \otimes_S \eD$, and $\TE$ is an $\sS$-linear coproduct. The morphism $\aug_*:\ED \to \eD$ is clearly a morphism of $\sS$-coalgebras. In other words, the diagram
\begin{equation}\label{EDeDcopr_diag}
\begin{split}
\xymatrix{
\DcF \ar[r]^{\phi_{\DcF}} \ar[d]^{\QWcopr} & \ED \ar[r]^{\aug_*} \ar[d]^{\EDcopr} & \eD \ar[d]^{\TE} \\
\DcF \otimes_\sS \DcF \ar[r]^-{\phi_{\DcF}^{\otimes 2}} & \ED\otimes_S \ED \ar[r]^{\aug_*^{\otimes 2}} & \eD \otimes_R \eD
}
\end{split}
\end{equation}
commutes.
 
\begin{rem}
The significance of the coproduct $\TE$ follows from the following fact:

Following \cite[\S6]{CPZ} consider the dual
$\eD^*=\Hom_R(\eD,R)$ (in loc. cit. $\eD^*=\mathcal{H}(M)_F$).  
 Then by \cite[Thm.~7.3 and Thm.~13.12]{CPZ} the coproduct $\TE$ on $\eD$
  induces (by duality) an $R$-algebra structure on $\eD^*$. Moreover,
under the hypotheses of \cite[Thm.~13.3]{CPZ}
there is a natural ring isomorphism $\eD^*\simeq \hh(G/B)$, where
$\hh(G/B)$ is the algebraic oriented cohomology of the variety of
complete flags $G/B$.
\end{rem}

\section{The dual of a formal affine Demazure algebra}\label{sec:dualaffine}

In the present section we consider the dual $(\ED)^\star$ of the algebra of Demazure operators, or equivalently the dual of the formal
affine Demazure algebra $\DcF^\star$ and prove Theorem~\ref{mainthmdual_theo}.

\medskip

We will need the following classical facts, that the reader can easily verify anyway.

Let $S$ be a commutative unital ring and let $D$ be a free left $S$-module of finite type.
Assume that $D$ is an $\sS$-coalgebra, with linear coproduct 
$\QWcopr \colon D \to D\ot_S D$ and counit $\Dcoun:D\to S$. 
Then the dual $D^\star=\Hom_S(D,S)$ has a natural structure of $S$-algebra with unit, given by dualizing the coproduct and using the natural isomorphism $D^\star\otimes_S D^\star \simeq (D\otimes_S D)^\star$, and dualizing the counit, i.e. $\Dcoun$, seen as an element of $D^\star$, is the unit. It is commutative if and only if $D$ is cocommutative. Let $p^\theta_{\theta_1,\theta_2}\in S$ be the coefficients of the coproduct on a basis $\{x_\theta\}_{\theta\in \Theta}$ of $D$ ($\Theta$ is therefore finite), i.e. for every $\theta\in \Theta$, 
$$\QWcopr(x_\theta)=\sum_{\theta_1,\theta_2\in \Theta} p^\theta_{\theta_1,\theta_2}x_{\theta_1}\ot x_{\theta_2}.$$
On the dual basis $(x_\theta^\star)_{\theta \in \Theta}$ of $(x_\theta)_{\theta\in \Theta}$, the product of $D^\star$ is given by 
$$x^\star_{\theta_1}x^\star_{\theta_2}=\sum_{\theta\in \Theta} p^\theta_{\theta_1,\theta_2} x^\star_\theta.$$

\medskip

\emph{For the rest of this section, we assume that $2\tor$ is regular in $R$. It implies that $\sS$ is $\RS$-regular by Lemma \ref{rootinj_lem}.}
\medskip

We apply the above construction to $D=\ED$, with its coalgebra structure given by Theorem \ref{EDCopr_theo}. This yields a commutative algebra with unit $\ED^\star$. Given choices of a reduced decompositions $I_w$ for each element $w \in W$, we have a basis $(\De_{I_w})_{w\in W}$ of $\ED$. 

\begin{rem}
Instead of $\ED$, we can use $\DcF$, with its coalgebra structure given by Theorem \ref{Demcoprod_theo} and its basis $(\Del_{I_w})_{w\in W}$. Since we have $\DcF\simeq \ED$ as $\sS$-coalgebras by Theorem \ref{DF=DLambda_theo}, we dually have $(\ED)^\star\simeq \DcF^\star$ as algebras, and it won't make any difference. In particular, Theorem \ref{mainthmdual_theo} also holds for $\DcF$. 
\end{rem}

The $R$-dual $(\eD)^*=\Hom_R(\eD,R)$ is an $R$-algebra with unit by dualizing the coproduct $\TE$ of section \ref{sec:compare}. By \cite[Prop. 5.4]{CPZ}, the $\aug \De_{I_w}$, $w \in W$, form a basis of $\eD$. Using $\aug:\sS \to R$, we can restrict $(\eD)^*$ to an $\sS$-algebra with unit.

\begin{lem}
The map $\xi:(\ED)^\star \to (\eD)^*$ sending $f$ to the map $\aug d \mapsto \aug f(d)$ is well defined, and it is a surjective map of $\sS$-algebras with units.
\end{lem}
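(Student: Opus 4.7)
The plan is to handle the four assertions separately: well-definedness, $\sS$-linearity, multiplicativity (including preservation of the unit), and surjectivity. All four come down to a direct chase after the right formal inputs.

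First, for well-definedness, it suffices to show that if $d\in\ED$ satisfies $\aug_*(d)=\aug\circ d=0$, then $\aug(f(d))=0$ for every $f\in (\ED)^\star=\Hom_\sS(\ED,\sS)$. The condition $\aug\circ d=0$ means $d(\sS)\subseteq\IF$. Since $2\tor$ is regular, Lemma~\ref{augmDaction_lem} applies and gives $d\in\IF\ED$. Because $f$ is $\sS$-linear, $f(\IF\ED)=\IF f(\ED)\subseteq\IF$, so $\aug(f(d))=0$. This defines $\xi(f)\colon\eD\to R$ unambiguously, and $\xi$ itself is then clearly $R$-linear.

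Next, $\sS$-linearity of $\xi$: for $s\in\sS$ and $f\in (\ED)^\star$, one has $\xi(sf)(\aug d)=\aug(sf(d))=\aug(s)\aug(f(d))=\aug(s)\cdot\xi(f)(\aug d)$, which is the $\sS$-action on $(\eD)^*$ induced by $\aug$. For the unit, recall from Theorem~\ref{EDCopr_theo} that the unit of $(\ED)^\star$ is $\EDcoun\colon d\mapsto d(1)$, and similarly the unit of $(\eD)^*$ is evaluation at $1$; then $\xi(\EDcoun)(\aug d)=\aug(d(1))=(\aug d)(1)$, as required.

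For multiplicativity, the key input is the commutative square on the right of diagram~\eqref{EDeDcopr_diag}, which asserts that $\aug_*$ is a morphism of $\sS$-coalgebras, i.e.\ $(\aug_*\otimes\aug_*)\circ\EDcopr=\TE\circ\aug_*$ (using the natural identification $\eD\otimes_\sS\eD=\eD\otimes_R\eD$ provided by $\aug$). Writing $\EDcopr(d)=\sum d_{(1)}\otimes d_{(2)}$ in Sweedler notation, this gives $\TE(\aug d)=\sum\aug(d_{(1)})\otimes\aug(d_{(2)})$. Unwinding the definitions of the dual products on both sides yields, for $f_1,f_2\in(\ED)^\star$,
\[
\xi(f_1)\xi(f_2)(\aug d)=\sum\aug(f_1(d_{(1)}))\aug(f_2(d_{(2)}))=\aug\Big(\sum f_1(d_{(1)})f_2(d_{(2)})\Big)=\xi(f_1f_2)(\aug d),
\]
so $\xi$ is an $\sS$-algebra map.

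Finally, for surjectivity, fix reduced sequences $I_w$ for each $w\in W$. Then $(\De_{I_w})_{w\in W}$ is an $\sS$-basis of $\ED$ by \cite[Thm.~4.11]{CPZ}, and $(\aug\De_{I_w})_{w\in W}$ is an $R$-basis of $\eD$ by \cite[Prop.~5.4]{CPZ}. Let $\De_{I_w}^\star$ and $(\aug\De_{I_w})^*$ be the corresponding dual bases. A direct computation gives $\xi(\De_{I_w}^\star)(\aug\De_{I_v})=\aug(\delta_{v,w})=\delta_{v,w}$, so $\xi(\De_{I_w}^\star)=(\aug\De_{I_w})^*$. Any $g\in(\eD)^*$ is of the form $g=\sum_w a_w(\aug\De_{I_w})^*$ with $a_w\in R$; viewing $a_w$ as elements of $\sS$ via the structural map $R\to\sS$, the element $\sum_w a_w\De_{I_w}^\star\in(\ED)^\star$ maps to $g$ under $\xi$.

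I expect the main subtlety to be the multiplicativity step: it requires juggling the identification $\eD\otimes_R\eD=\eD\otimes_\sS\eD$ together with the coalgebra compatibility provided by diagram~\eqref{EDeDcopr_diag}, whereas the other steps are essentially bookkeeping once Lemma~\ref{augmDaction_lem} and \cite[Prop.~5.4]{CPZ} are in hand.
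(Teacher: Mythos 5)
Your proof is correct and follows essentially the same route as the paper's: well-definedness via Lemma~\ref{augmDaction_lem} and $\sS$-linearity of $f$, multiplicativity via the commutativity of Diagram~\eqref{EDeDcopr_diag} together with $\aug$ being a ring morphism, and surjectivity by matching the dual bases $\De_{I_w}^\star \mapsto (\aug\De_{I_w})^*$. You simply spell out the steps (Sweedler notation for the product check, the explicit dual-basis computation) that the paper leaves as exercises.
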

\begin{proof}
If $\aug d_1 = \aug d_2$, as elements of $\Homc(\sS,R)$, or in other words $(d_1-d_2) \in \Homc(\sS,\IF)$, then Lemma \ref{augmDaction_lem} shows that $d_1-d_2 \in \IF \ED$, and thus $\aug f(d_1)=\aug f(d_2)$ in $R$ for any $f \in \Hom_\sS(\ED,\sS)$. This proves that $\xi$ is well-defined. 

It is clearly a morphism of $\sS$-modules. Checking that it respects the products is a simple exercise from their definitions as duals to the coproducts, using the commutativity of Diagram \eqref{EDeDcopr_diag} and the fact that $\aug$ is a morphism of rings. 

Finally, $\xi$ is surjective, because it sends the (dual) basis $\De_{I_w}^\star$ of $(\ED)^\star$ to the (dual) basis $\aug \De_{I_w}^*$ of $(\eD)^*$.
\end{proof}

For any $s\in \sS$, let $\ev_s \in \ED^\star$ be the map $d \mapsto d(s)$.
\begin{lem} \label{evproduct_lem}
The product on $\ED$ satisfies $\ev_{s_1} \cdot \ev_{s_2} = \ev_{s_1 s_2}$.
\end{lem}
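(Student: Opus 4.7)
The plan is to unfold the definition of the algebra structure on $(\ED)^\star$, which comes from dualizing the coalgebra structure on $\ED$ provided by Theorem~\ref{EDCopr_theo}. By construction, for any $f_1,f_2\in(\ED)^\star$ and any $d\in\ED$, the product in $(\ED)^\star$ is given by
\[
(f_1\cdot f_2)(d)\;=\;(f_1\otimes f_2)\bigl(\EDcopr(d)\bigr),
\]
where $f_1\otimes f_2$ is seen as an element of $(\ED\otimes_\sS\ED)^\star$ through the canonical isomorphism $(\ED)^\star\otimes_\sS(\ED)^\star\simeq(\ED\otimes_\sS\ED)^\star$ (which is valid because $\ED$ is free of finite type over $\sS$ by \cite[Thm.~4.11]{CPZ}).

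First I would check the easy point that $\ev_s$ really lies in $(\ED)^\star=\Hom_\sS(\ED,\sS)$: by the definition of the $\sS$-module structure on $\ED\subseteq\Endc(\sS)$, i.e.\ $(s'\cdot d)(r)=s'\,d(r)$, we have $\ev_s(s'\cdot d)=s'\,d(s)=s'\,\ev_s(d)$, so $\ev_s$ is $\sS$-linear. Then I would simply compute: writing $\EDcopr(d)=\sum_i d_1^{(i)}\otimes d_2^{(i)}$ in $\ED\otimes_\sS\ED$, one has
\[
(\ev_{s_1}\cdot\ev_{s_2})(d)\;=\;\sum_i \ev_{s_1}(d_1^{(i)})\,\ev_{s_2}(d_2^{(i)})\;=\;\sum_i d_1^{(i)}(s_1)\,d_2^{(i)}(s_2),
\]
and by the definition of the map $\Inc_2$ the last sum equals $\Inc_2(\EDcopr(d))(s_1\otimes s_2)$. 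Applying formula~\eqref{coprodprod_eq}, this becomes $d(s_1s_2)=\ev_{s_1s_2}(d)$, as desired.

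There is essentially no obstacle: the whole argument is a formal duality computation once the product on the dual of a coalgebra is spelled out in terms of the coproduct. The only point requiring mild care is the identification of $\ev_{s_1}\otimes\ev_{s_2}$ with an element of $(\ED\otimes_\sS\ED)^\star$ that evaluates to the product of values; this is immediate from the finite freeness of $\ED$ over $\sS$ and from the $\sS$-linearity of each $\ev_{s_i}$ verified in the first step.
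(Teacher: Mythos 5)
Your proof is correct and follows essentially the same route as the paper's: both unwind the dual product as $(\ev_{s_1}\cdot\ev_{s_2})(d)=(\ev_{s_1}\otimes\ev_{s_2})(\EDcopr(d))=\Inc_2(\EDcopr(d))(s_1\otimes s_2)$ and then apply formula~\eqref{coprodprod_eq} to get $d(s_1s_2)$. Your extra verifications (that $\ev_s$ is $\sS$-linear and that the pairing is well defined over $\sS$) are harmless additions the paper leaves implicit.
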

\begin{proof}
By Theorem \ref{EDCopr_theo}, the coproduct $\EDcopr$ on $\ED$ satisfies formula \eqref{coprodprod_eq}. Thus 
\begin{equation*}
\begin{split}
(\ev_{s_1} \cdot \ev_{s_2})(d) & = (\ev_{s_1}\otimes \ev_{s_2})(\EDcopr (d)) = \Inc_2(\EDcopr(d))(s_1 \otimes s_2) \\
& = d(s_1 s_2) = \ev_{s_1s_2}(d).
\end{split}
\end{equation*}
On $\DcF^\star$, the property then follows using Theorem \ref{DF=DLambda_theo}.
\end{proof}

Let $\cmS: \sS \to \ED^\star$ be the $R$-linear map sending $s\in \sS$ to $\ev_s$. By the previous lemma, it is actually an $R$-algebra map. 
Since the subring $\sS^W\subseteq \sS \subseteq \ED$ of elements fixed by the Weyl group $W$ is central\footnote{$\ED$ is by definition generated by elements that all commute with $\sS^W$, see \cite[Prop. 3.14]{CPZ}. This is where we first use that $2$ is regular.} in $\ED$, the map $\cmS$ is in fact $\sS^W$-linear.

The map $\cmS$ would correspond in cohomology to an equivariant characteristic map. In \cite[Def. 6.1]{CPZ}, there is a (non-equivariant) characteristic map $\cmR: \sS \to (\eD)^*$, sending $s \in \sS$ to $\ev_s: \aug d \mapsto \aug d(s)$. 
They are related by $\cmR=\xi \circ \cmS$. Note that $\cmR$ is a map of $R$-algebras, either by \cite[Theorem 7.3]{CPZ} or by Lemma \ref{evproduct_lem} and the fact that $\xi$ is a morphism of $\sS$-algebras and $\sS$ acts on $\eD^*$ via $R$ through $\aug$.

The tensor product $\sS \otimes_{\sS^W} \sS$ is an $\sS$-algebra via the first component. We now consider the $\sS$-algebra map 
$$
\rhoS: \sS \otimes_{\sS^W} \sS \to \ED^\star,\qquad s_1\otimes s_2 \mapsto s_1 \cmS(s_2)
$$
and the $R$-algebra map
$$
\rhoR: R \otimes_{\sS^W} \sS \to \eD^*,\qquad r\otimes s \mapsto r \cmR(s_2).
$$
where $R$ is considered an $\sS^W$-module via the augmentation map $\aug$.
These two maps are related by the commutative diagram
$$
\xymatrix{
\sS \otimes_{\sS^W} \sS \ar[r]^{\rhoS} \ar@{->>}[d]_{\aug \otimes \id_S} & \ED^\star \ar@{->>}[d]^{\xi} \\
R \otimes_{\sS^W} \sS \ar[r]^{\rhoR} & (\eD)^* .
}
$$
The following theorem generalizes
\cite[Thm.4.4]{kk} and \cite[Thm.~11.5.15]{Ku} to the context of an
arbitrary formal group law. It is also related to \cite[Thm 5.1 and Cor. 5.2]{KiKr}, see Remark \ref{charsurjK_rem}.

\begin{theo} \label{mainthmdual_theo} 
If $2\tor$ is regular in $R$, the following conditions are equivalent. 
\begin{enumerate}
\item \label{rhoSiso_item} The map $\rhoS: S\otimes_{S^W}S\to \ED^\star$ is an isomorphism (of $\sS$-algebras).
\item \label{rhoRiso_item} The map $\rhoR: R\otimes_{S^W}S \to (\eD)^*$ is an isomorphism (of $R$-algebras). 
\item \label{charmapsurj_item} The characteristic map $\cmR$ (of $R$-algebras) is surjective. 
\end{enumerate}
In particular all are true if the torsion index $\tor$ is invertible, by \cite[Theorem 6.4]{CPZ}.
\end{theo}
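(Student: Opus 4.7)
The plan is to prove the cycle (a)$\Rightarrow$(b)$\Rightarrow$(c)$\Rightarrow$(a) using the commutative square relating $\rhoS$, $\rhoR$, $\aug \otimes \id_S$ and $\xi$. For (a)$\Rightarrow$(b), a direct computation gives $\xi(s\De_{I_w}^\star) = \aug(s)\aug\De_{I_w}^*$, so that $\xi$ is surjective with kernel $\IF \ED^\star$; likewise $\aug \otimes \id_S$ is surjective with kernel $\IF \cdot M$, where I write $M := S \otimes_{S^W} S$. If (a) holds, the $S$-linearity of $\rhoS$ forces $\rhoS(\IF \cdot M) = \IF\cdot \ED^\star$, and passing to cokernels in the diagram shows $\rhoR$ is an isomorphism. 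For (b)$\Rightarrow$(c), the identity $\rhoR(r\otimes s) = r\cmR(s) = \cmR(rs)$ gives $\im(\rhoR) = \im(\cmR)$, so surjectivity of $\rhoR$ forces that of $\cmR$.

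The main direction is (c)$\Rightarrow$(a). First, I establish surjectivity of $\rhoS$ by Nakayama's lemma: $\ED^\star$ is free of rank $|W|$ over $S$ (hence finitely generated); $\IF \subseteq \mathrm{rad}(S)$ since $1+\IF \subseteq S^\times$ by completeness of $S$ in the $\IF$-adic topology (geometric series); and $\rhoS \bmod \IF = \rhoR$ is surjective by (c). Then I use (c) to choose $s_w \in S$ with $\cmR(s_w) = \aug\De_{I_w}^*$, equivalently $\aug\De_{I_v}(s_w) = \delta_{v,w}$ for all $v,w \in W$. The $|W|\times|W|$ matrix $A := (\De_{I_v}(s_w))_{v,w}$ with entries in $S$ then satisfies $A \equiv I \pmod \IF$, hence is invertible over $S$. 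Consequently $\rhoS(1 \otimes s_w) = \sum_v A_{vw}\De_{I_v}^\star$ is an $S$-basis of $\ED^\star$, and the $S$-submodule $T := \sum_w S\cdot(1 \otimes s_w) \subseteq M$ is free of rank $|W|$ with $\rhoS|_T$ an isomorphism onto $\ED^\star$. Combined with surjectivity of $\rhoS$, this yields the internal direct-sum decomposition $M = T \oplus \ker\rhoS$ of $S$-modules.

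The main obstacle is then to conclude $\ker\rhoS = 0$, equivalently $T = M$, equivalently that $\{s_w\}_{w \in W}$ generates $S$ as an $S^W$-module. I plan to combine two complementary inputs: first, \cite[Thm.~6.4]{CPZ} yields (a) after inverting $\tor$, so $\rhoS[\tfrac{1}{\tor}]$ is an isomorphism and $\ker\rhoS$ consists of $\tor$-power torsion elements; second, a filtered iterative construction using completeness of $S$ in the $\IF$-adic topology produces, for any $s \in S$, a convergent expansion $s = \sum_w a_w s_w$ with $a_w \in S^W$ — one begins with $a_w^{(0)} := \aug\De_{I_w}(s) \in R \subseteq S^W$, for which the remainder $s - \sum_w a_w^{(0)} s_w$ lies in $\ker\cmR \subseteq \IF$, and then refines at each step by $S^W$-coefficients of successively higher $\IF$-filtration, using the invertibility of $A$ to solve the resulting linear system modulo the next power of $\IF$. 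Together these give $T = M$ and hence $\ker\rhoS = 0$, completing the proof of (a). The final parenthetical statement of the theorem is immediate from \cite[Thm.~6.4]{CPZ} applied over $R[\tfrac{1}{\tor}]$.
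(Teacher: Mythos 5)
Your reductions (a)$\Rightarrow$(b)$\Rightarrow$(c) are fine and match the paper's use of the commutative square and of $\cmR=\rhoR\circ(1\otimes\id_S)$, and your Nakayama argument for the \emph{surjectivity} of $\rhoS$ (using $\IF\subseteq\mathrm{rad}(\sS)$ and freeness of $\ED^\star$) is a legitimate, slightly different route to what the paper gets by exhibiting an explicit spanning family. The genuine gap is in the last step of (c)$\Rightarrow$(a), i.e.\ the injectivity of $\rhoS$, which you correctly isolate as the claim that the $s_w$ generate $\sS$ as an $\sS^W$-module. Neither of your two inputs closes it. The $\tor$-torsion observation only shows $\ker\rhoS$ is killed after inverting $\tor$; to conclude $\ker\rhoS=0$ you would need $S\otimes_{S^W}S$ to have no $\tor$-torsion, and that is not known before one proves precisely the freeness statement at issue. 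The iterative construction fails for a structural reason: after the first step the remainder lies in $\ker\cmR$, and ``solving the linear system using the invertibility of $A$'' produces correction coefficients in $\sS$, not in $\sS^W$ --- you give no mechanism forcing $W$-invariance at the higher stages, and that is exactly the hard content of a Pittie--Steinberg type theorem. Worse, the filtration-compatible version your convergence argument needs (coefficients of ``successively higher $\IF$-filtration'') is actually false in the interesting cases: passing to the associated graded of the $\IF$-adic filtration degenerates the formal group law to the additive one, where the characteristic map need not be surjective even though (c) holds (e.g.\ $K$-theory of $G_2^{\sco}$ over $\Z$); the paper's own remark that $u'_0$ cannot be chosen in $\IF^N$ for that example is a symptom of this.

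The paper avoids all of this by working not with $|W|$ independent elements $s_w$ but with a \emph{single} element $u'_0$ satisfying $\aug\De_I(u'_0)=\delta_{I,\,\text{reduced of }w_0}$ (Lemma~\ref{charsurjimplies_lem}, which uses Corollary~\ref{XIexpression_cor}), and then with the derived family $\{\De_{I_w}(u'_0)\}_{w\in W}$. The invertibility of the matrix $(\De_{I_v}\De_{I_w}(u'_0))$ over $\sS$ then feeds into \cite[Thm.~6.7]{CPZ}, which is the nontrivial result asserting that such a family is a \emph{basis of $\sS$ over $\sS^W$}; freeness of $S\otimes_{S^W}S$ of rank $|W|$ over $\sS$ follows, and a surjection of free modules of equal finite rank is an isomorphism. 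To repair your proof you should either reduce your family $\{s_w\}$ to one of this special form and invoke that theorem, or supply an actual proof (by descending induction on the Bruhat order, using the $\sS^W$-linearity of the $\De_{I_v}$ and the product decompositions of Corollary~\ref{XIexpression_cor}) that the coefficients solving your linear system are $W$-invariant.
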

\begin{proof}
By the  diagram before the theorem, if $\rhoS$ is surjective, then so is $\rhoR$. Furthermore, by definition of $\rhoR$, we have $\cmR=\rhoR\circ (1\otimes \id_S)$. Since the map $1\otimes \id_S$ is surjective (because $S^W$ surjects to $R$ via $\aug$), we have that $\cmR$ is surjective if and only if $\rhoR$ is surjective. We now need the following lemma.
\begin{lem} \label{charsurjimplies_lem}
Let $N$ be the length of $w_0$, the longest element in $W$. If $\cmR$ is surjective, then there is an element $u'_0\in \sS$ such that for any sequence $I$, we have
\[
\aug\De_I(u'_0)=\begin{cases}
0, & \text{if }I \text{ is of length }<N\text{ or is of length }N\text{
  but non reduced},\\
1, & \text{if }I \text{ is of length }N\text{ and is reduced}.
\end{cases}
\]
\end{lem}
\begin{proof}
Choose a reduced decomposition $I_w$ for every $w\in W$. By definition of $\cmR$, if it is surjective, there is an element $u'_0$ such that $\aug\Del_{I_w}(u'_0)=1$ if $w=w_0$ and zero if not. Then, decomposing $\Del_I$ as in Lemma~\ref{XIexpression_lem}, we have $\Del_I(u'_0)=\sum_v \aug(a_{I,I_v})\aug\Del_{I_v}(u'_0)=\aug(a_{I,I_{w_0}})$ and the conditions of Lemma~\ref{XIexpression_lem} imply the result.
\end{proof}
Returning to the proof of Theorem \ref{mainthmdual_theo}, let us conversely show that if $\cmR$ is surjective, then both $\rhoR$ and $\rhoS$ are isomorphisms. 
The lemma implies that the matrix $(\aug\Del_{I_v}\Del_{I_w}(u'_0))_{(v,w)\in W \times W}$ (with coefficients in $R$) is invertible, since it is triangular with $1$ on the diagonal. Thus, the matrix $(\Del_{I_v}\Del_{I_w}(u'_0))_{(v,w)\in W \times W}$ (with coefficients in $\sS$) is invertible, since the kernel of $\aug:\sS \to R$ is in the radical of $\sS$. As in \cite[Theorem 6.7]{CPZ}, this implies that the $\De_{I_w}(u'_0)$ form a basis of $\sS$ as an $\sS^W$-module. Thus, the map $\rhoS$ (resp. $\rhoR$) is between two free $\sS$-modules (resp. $R$-modules) of the same rank $|W|$, so it is an isomorphism if it is surjective. It suffices to show the surjectivity of $\rhoS$, since it implies that of $\rhoR$. The image $\rhoS(1\otimes \Del_{I_v}(u'_0))$ of a basis element of $\sS \otimes_{\sS^W} \sS$ decomposes as $\sum_w \Del_{I_w}\Del_{I_v}(u'_0) \Del_{I_w}^\star$ on the (dual) basis of $(\ED)^\star$, so by invertibility of the above matrix, we are done.
\end{proof}

\begin{theo}
If the conditions of Theorem \ref{mainthmdual_theo} hold, then $\ED =\End_{S^W}(S)$ in $\End_R(S)$.
\end{theo}
\begin{proof}
By adjunction, we have $\Hom_S(S\otimes_{S^W}S, S)\cong \End_{S^W}(S)$, mapping $f$ to $g$ such that $g(s)=f(1 \otimes s)$.
The composition 
\[
\ED\overset{\sim}{\longrightarrow} ((\ED)^\star)^\star\overset{\rhoS^\star}{\longrightarrow} \Hom_{S}(S\otimes_{S^W}S, S)\cong \End_{S^W}(S)
\]
is then precisely the inclusion $\ED \hookrightarrow \End_{S^W}(S)$, so condition \eqref{rhoSiso_item} of Theorem \ref{mainthmdual_theo} implies that $\ED = \End_{S^W}(S)$.
\end{proof}

\begin{rem} \label{charsurjK_rem}
When $F=U$ is the universal formal group law over the Lazard ring, by
\cite[Thm.~5.1]{KiKr} the algebra $\sS\ot_{\sS^W}\sS$ can be identified
(up to a completion) with the equivariant algebraic cobordism
$\Omega_T(G/B)$ after inverting $\tor$. 
Together with \cite[Cor.~5.2]{KiKr}, a result similar to Theorem
\ref{mainthmdual_theo} is shown, using topological inputs and
inverting the torsion index. Here, apart from the fact that the
treatment is completely algebraic, the main improvement is that for
other formal group laws, it might not be necessary to invert the
torsion index for the equivalent conditions of Theorem
\ref{mainthmdual_theo} to hold. For example, in the case of the
periodic multiplicative formal group law $x+y-xy$ of $K$-theory, the
characteristic map is already surjective over $\Z$ for simply
connected root data \cite{Pi}.  
\end{rem}

\begin{rem}
Note that in Lemma \ref{charsurjimplies_lem}, the element $u'_0$ has no reason to be in $\IF^N$. For example, for the periodic multiplicative formal group law $x+y-xy$ over $\Z$, for simply connected root data with non-trivial torsion index (ex: $G_2^{\sco}$), such an element $u'_0$ cannot be found in $\IF^N$.
\end{rem}

\begin{rem} 
In this section the algebra $(\ED)^\star$ can be replaced everywhere by $\DcF^\star$.
\end{rem}

\section{Appendix}

Recall that an element of a commutative, associative unital ring $R$ is called regular
if it is not a zero divisor or, in other words, that multiplication by
this element is an injective ring endomorphism.

\begin{lem} \label{zerodivisorpoly_prop}
Let $f$ be a zero divisor in the polynomial ring $R[x_1,\ldots,x_n]$. 
Then there exists an element $r\neq 0$ in $R$ such that $r f =0$. 
\end{lem}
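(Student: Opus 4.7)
The plan is to reduce the general case to the univariate case $n=1$ via a Kronecker-type substitution, and then handle $n=1$ by a minimal-degree argument (the classical McCoy proof).

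For the base case $n=1$, I would write $f = \sum_{j=0}^{m} a_j x^j$ and choose a nonzero $g = \sum_{j=0}^{k} b_j x^j$ with $b_k \neq 0$ and $fg = 0$ of \emph{minimal degree} $k$. Reading the top coefficient of $fg$ gives $a_m b_k = 0$, so $a_m g$ annihilates $f$ and has degree $<k$; minimality forces $a_m g = 0$. Downward induction on $j$ then shows that $a_j g = 0$ for every $j$: assuming $a_m g = \cdots = a_{j+1} g = 0$, the coefficient of $x^{j+k}$ in $fg$ collapses to $a_j b_k$, so $a_j b_k = 0$, whence $a_j g$ has degree $<k$ and annihilates $f$, forcing it to be zero. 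In particular $a_j b_k = 0$ for every $j$, so $r := b_k \in R\setminus\{0\}$ satisfies $rf = 0$.

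For $n \geq 2$, let $g \in R[x_1,\ldots,x_n]\setminus\{0\}$ satisfy $fg = 0$, and pick an integer $N$ strictly larger than every partial degree $\deg_{x_i}(f)$ and $\deg_{x_i}(g)$. Define the ring homomorphism $\phi \colon R[x_1,\ldots,x_n] \to R[T]$ by $\phi(x_i) = T^{N^{i-1}}$. Because of the uniqueness of base-$N$ expansions, the monomials $x^{\alpha}$ with $\alpha_i < N$ for all $i$ map to pairwise distinct powers of $T$, so $\phi$ is injective on the $R$-submodule they span. In particular $\phi(f)$ and $\phi(g)$ are nonzero and $\phi(f)\phi(g)=0$, making $\phi(f)$ a zero divisor in $R[T]$. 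The $n=1$ case then yields an $r \in R\setminus\{0\}$ with $r\phi(f) = \phi(rf) = 0$, and since $rf$ has the same partial degrees as $f$ (hence $<N$), the injectivity of $\phi$ on that subspace gives $rf = 0$.

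The only genuinely delicate point is the base case, where the trick is to observe that multiplying the minimal-degree annihilator by a coefficient of $f$ produces a lower-degree annihilator, forcing it to vanish; once this is established, the multivariate reduction is purely bookkeeping about degrees, and no assumption about $R$ beyond commutativity is needed.
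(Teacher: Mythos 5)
Your proof is correct. Note, however, that the paper does not actually prove this lemma: its ``proof'' is a one-line citation to McCoy's 1942 note (Theorem 3 and the remark following it), so any self-contained argument is necessarily a different route. Your univariate case is precisely McCoy's classical argument --- take a nonzero annihilator $g$ of minimal degree $k$, show by descending induction that each coefficient $a_j$ of $f$ kills $g$ (since $a_jg$ would otherwise be a nonzero annihilator of degree $<k$), and conclude that the leading coefficient $b_k$ kills $f$. Your reduction of the multivariate case via the Kronecker substitution $x_i\mapsto T^{N^{i-1}}$ is a clean and fully rigorous way to get the several-variable statement; the degree bookkeeping is handled correctly, since $\phi$ is injective on the $R$-span of monomials with all partial degrees $<N$, both $f$ and $g$ lie in that span, and $rf$ has support contained in that of $f$, so $\phi(rf)=0$ forces $rf=0$. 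This is arguably preferable to the induction-on-the-number-of-variables route one might extract from McCoy's remark (which at first produces an annihilator in $R[x_1,\ldots,x_{n-1}]$ rather than in $R$ and requires an extra step to descend), and it uses nothing about $R$ beyond commutativity, exactly as the lemma requires.
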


\begin{proof}
See \cite{McCoy42}, Theorem 3 and the comment following it.
\end{proof}

Let $I=(x_1,\ldots,x_n)$ denote the ideal in the ring $R\lbr
x_1,\ldots,x_n\rbr$ of formal power series generated by $x_1,\ldots,x_n$.

\begin{cor} \label{regularpowerseries_lem}
If $f \in R\lbr x_1,\ldots, x_n\rbr$ is a zero divisor, then there
exists 
an element $r\neq 0$ in $R$ such that all coefficients of the lowest
degree part of $f$ are annihilated by $r$. 
\end{cor}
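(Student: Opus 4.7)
The plan is to reduce to the polynomial case of Lemma~\ref{zerodivisorpoly_prop} by extracting the lowest-degree homogeneous parts.

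Since $f$ is a zero divisor and (implicitly) nonzero, there exists a nonzero $g \in R\lbr x_1,\ldots,x_n\rbr$ with $fg=0$. Write $f = f_d + f_{d+1} + \cdots$ and $g = g_e + g_{e+1} + \cdots$ as their expansions into homogeneous components, where $f_d$ is the lowest degree homogeneous part of $f$ and $g_e$ is the lowest degree homogeneous part of $g$ (so $f_d \neq 0$ and $g_e \neq 0$). The homogeneous component of degree $d+e$ in the product $fg$ is exactly $f_d g_e$, since no other pair of homogeneous components contributes in that degree. The equality $fg = 0$ then forces $f_d g_e = 0$ in $R[x_1,\ldots,x_n]$.

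Since $g_e \neq 0$ and $f_d g_e = 0$, the homogeneous polynomial $f_d$ is a zero divisor in $R[x_1,\ldots,x_n]$. Applying Lemma~\ref{zerodivisorpoly_prop}, we obtain a nonzero $r \in R$ with $r f_d = 0$. But $r f_d = 0$ as a polynomial means that $r$ annihilates every coefficient of $f_d$, which is precisely the lowest-degree homogeneous part of $f$.

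There is essentially no obstacle here: the argument is a one-line reduction once one notices that the bottom degree part of a product of power series is the product of the bottom degree parts of the factors. The only care needed is to invoke Lemma~\ref{zerodivisorpoly_prop} correctly in the polynomial ring applied to the homogeneous polynomial $f_d$, rather than attempting to use it directly on the power series $f$.
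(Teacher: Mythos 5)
Your proof is correct and follows essentially the same route as the paper: take a nonzero $g$ with $fg=0$, note that the lowest-degree homogeneous component of $fg$ is the product $f_d g_e$ of the lowest-degree parts, conclude $f_d g_e = 0$, and apply Lemma~\ref{zerodivisorpoly_prop} to the homogeneous polynomial $f_d$. No gaps; this matches the paper's argument step for step.
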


\begin{proof}
Take a nonzero $g\in R\lbr x_1, \ldots, x_n\rbr$ such that $fg=0$. Let
$f \in f_i + I^{i+1}$ and $g\in g_j + I^{j+1}$ where $f_i$ and $g_j$ are
the lowest degree parts (homogeneous and nonzero). Then $fg \in f_i g_j +I^{i+j+1}$ and,
hence, $f_i g_j=0$. The result then follows by Lemma~\ref{zerodivisorpoly_prop}. 
\end{proof}

\begin{lem} \label{regulardegree1_cor}
Let $f\in a_1 x_1 + \cdots + a_n x_n+I^2$, $a_i\in R$, be an element of $R\lbr x_1,\ldots,x_n\rbr$.
\begin{enumerate}
\item \label{onereg_item} 
If $a_i$ is regular in $R$ for some $i$, 
then $f$ is regular in $R\lbr x_1,\ldots,x_n\rbr$. 
\item \label{completebasis_item} 
If the vector of coefficients $(a_1,\ldots, a_n)$ can be completed to
a basis of $R^n$, 
then $f$ 
is regular in $R\lbr x_1,\ldots, x_n\rbr$.
\end{enumerate}
\end{lem}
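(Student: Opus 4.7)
\medskip

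\textbf{Proof proposal.}
My plan is to prove part~\eqref{onereg_item} directly by applying Corollary~\ref{regularpowerseries_lem}, and then to deduce part~\eqref{completebasis_item} from~\eqref{onereg_item} by performing an $R$-linear change of variables arising from an element of $GL_n(R)$.

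For part~\eqref{onereg_item}, assume by contradiction that $f$ is a zero divisor in $R\lbr x_1,\ldots,x_n\rbr$. Since $a_i$ is regular in $R$ we have $a_i\neq 0$, so the lowest degree homogeneous part of $f$ is precisely the degree-one form $a_1 x_1+\cdots + a_n x_n$. Corollary~\ref{regularpowerseries_lem} then produces a nonzero element $r\in R$ such that $r a_j = 0$ for every $j$; in particular $r a_i = 0$, which contradicts the regularity of $a_i$. Hence $f$ is regular.

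For part~\eqref{completebasis_item}, complete $(a_1,\ldots,a_n)$ to a basis of $R^n$ and let $A\in GL_n(R)$ be the matrix whose first row is $(a_1,\ldots,a_n)$. The linear substitution $y_i = \sum_j A_{ij}\, x_j$ defines an $R$-algebra automorphism of $R[x_1,\ldots,x_n]$ which is continuous for the $(x_1,\ldots,x_n)$-adic topology, and therefore extends to an $R$-algebra automorphism $\varphi\colon R\lbr x_1,\ldots,x_n\rbr \tooby{\simeq} R\lbr x_1,\ldots,x_n\rbr$ sending $y_1$ to $a_1 x_1+\cdots+ a_n x_n$. Applying $\varphi^{-1}$ we see that $f$ is regular if and only if $\varphi^{-1}(f) = x_1 + (\text{terms in }I^2)$ is regular, which holds by part~\eqref{onereg_item} applied to the coefficient $1$ (which is always regular).

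The only point that requires any care is the identification of the lowest degree part in part~\eqref{onereg_item}, which I expect to be the main (very minor) obstacle: one has to know that at least one of the $a_j$ is nonzero in order to invoke Corollary~\ref{regularpowerseries_lem} meaningfully, and this is exactly guaranteed by the regularity assumption on $a_i$. Everything else is a direct application of previously established facts.
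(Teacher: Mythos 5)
Your proof is correct. Part~\eqref{onereg_item} is essentially the paper's own argument: apply Corollary~\ref{regularpowerseries_lem} to get a nonzero $r$ annihilating the coefficients of the lowest degree part, which is the linear form, and contradict the regularity of $a_i$. For part~\eqref{completebasis_item} you take a genuinely different route. The paper disposes of both parts at once with the same McCoy-type argument: the nonzero $r$ annihilates every $a_j$, and since $(a_1,\ldots,a_n)$ is part of a basis of $R^n$ there exist $c_j\in R$ with $\sum_j c_j a_j=1$, whence $r=r\cdot\sum_j c_j a_j=0$, a contradiction. You instead reduce \eqref{completebasis_item} to \eqref{onereg_item} via the linear substitution attached to a matrix $A\in GL_n(R)$ whose first row is $(a_1,\ldots,a_n)$; this is sound (the substitution preserves the filtration by powers of $I$, so $\varphi^{-1}(f)=x_1+I^2$, and regularity is preserved by a ring automorphism) and is in effect the mechanism of Lemma~\ref{changevarbij_prop} from the appendix, in the easy purely linear case. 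The paper's version is more economical and needs no automorphism; yours makes explicit the reduction to the normal form $x_1+I^2$, which is arguably more transparent. The one pedantic caveat is the zero ring, where $a_i=0$ is regular and your inference ``$a_i$ regular implies $a_i\neq 0$'' fails; but there the lemma is vacuously true, so nothing is lost.
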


\begin{proof}
Assume that $f$ is a zero divisor in $R\lbr x_1,\ldots,x_n\rbr$. Then by
Corollary~\ref{regularpowerseries_lem}
there is $r\neq 0$ in 
$R$ that annihilates each $a_i$. This contradicts the assumptions
of \eqref{onereg_item} and \eqref{completebasis_item}.
\end{proof}

Observe that there exist examples of vectors whose coordinates are all zero
divisors but that can nevertheless be completed to a basis, so that \eqref{onereg_item} does not
imply \eqref{completebasis_item}.

\medskip

Let $A=(a_{ij})$ be an $n\times n$-matrix with coefficients in $R$. 
Consider a map $\Psi_A\colon R\lbr x_1, \ldots, x_n\rbr \to R \lbr x_1,
\ldots, x_n\rbr$ 
defined by sending each $x_i$ to an element of the form $\sum a_{ij} x_j +I^2$.

\begin{lem} \label{changevarbij_prop}
The matrix $A$ is invertible if and only if the map $\Psi_A$ is invertible. 
\end{lem}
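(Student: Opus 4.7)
The proof splits into two implications. The ``$\Psi_A$ invertible $\Rightarrow$ $A$ invertible'' direction comes from passing to the cotangent space $I/I^2$, while the converse is a formal inverse function theorem that I would prove by successive approximation, using completeness of $R\lbr x_1,\ldots,x_n\rbr$ in the $I$-adic topology.

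First I would treat the easy direction. Observe that $\Psi_A$ is an $R$-algebra endomorphism with $\Psi_A(x_i)\in I$, hence $\Psi_A(I)\subseteq I$ and $\Psi_A(I^2)\subseteq I^2$. If $\Psi_A$ is invertible, then the inverse $\Psi_A^{-1}$ is also an $R$-algebra endomorphism; composing the augmentation $\aug\colon R\lbr x_1,\ldots,x_n\rbr\to R$ with $\Psi_A^{-1}$ gives another $R$-algebra retraction, which must coincide with $\aug$ by uniqueness. Hence $\Psi_A^{-1}$ preserves $I=\ker(\aug)$ and its square as well. Both $\Psi_A$ and $\Psi_A^{-1}$ therefore induce mutually inverse $R$-linear endomorphisms of $I/I^2\simeq R^n$, and by construction the endomorphism induced by $\Psi_A$ is multiplication by $A$ on the basis $\overline{x}_1,\ldots,\overline{x}_n$. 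So $A$ is invertible.

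For the other direction, assume $A$ invertible with inverse $B=(b_{ij})$. The strategy is to produce elements $y_1,\ldots,y_n\in I$ with $\Psi_A(y_i)=x_i$. Start with $y_i^{(1)}:=\sum_j b_{ij}x_j$; then a direct computation gives $\Psi_A(y_i^{(1)})=x_i+e_i^{(1)}$ with $e_i^{(1)}\in I^2$. Inductively, assume $y_i^{(k)}\in I$ has been constructed so that $\Psi_A(y_i^{(k)})=x_i+e_i^{(k)}$ with $e_i^{(k)}\in I^{k+1}$. The key observation is that, modulo $I^{k+2}$, $\Psi_A$ acts on $I^{k+1}/I^{k+2}$ by the $(k+1)$-th tensor power of the action of $A$ on $I/I^2$ (computed on monomials of degree $k+1$); since $A$ is invertible, so is this action, so I can find $f_i^{(k)}\in I^{k+1}$ with $\Psi_A(f_i^{(k)})\equiv e_i^{(k)}\pmod{I^{k+2}}$. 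Set $y_i^{(k+1)}:=y_i^{(k)}-f_i^{(k)}$; the error then lies in $I^{k+2}$. By completeness, $y_i:=\lim_{k\to\infty}y_i^{(k)}$ exists in $R\lbr x_1,\ldots,x_n\rbr$ and $\Psi_A(y_i)=x_i$.

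To conclude, define the continuous $R$-algebra endomorphism $\Phi$ by $\Phi(x_i):=y_i$, which is well-defined by the universal property of $R\lbr x_1,\ldots,x_n\rbr$ because $y_i\in I$. Then $\Psi_A\circ\Phi$ is a continuous $R$-algebra endomorphism agreeing with the identity on each $x_i$, hence equals $\mathrm{id}$. Finally, $\Phi$ itself is of the form $\Psi_{A'}$ for the matrix $A'=B=A^{-1}$, which is also invertible, so the same construction yields a right inverse for $\Phi$; combined with $\Psi_A\circ\Phi=\mathrm{id}$ this forces $\Phi$ to be a two-sided inverse of $\Psi_A$.

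The main technical point, and the only one that really requires care, is the inductive step: checking that $\Psi_A$ acts invertibly on each graded piece $I^{k+1}/I^{k+2}$ when $A$ is invertible, so that the successive-approximation recursion can be carried out. Everything else is either universal-property bookkeeping or the use of $I$-adic completeness to pass to the limit.
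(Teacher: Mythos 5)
The paper itself does not prove this lemma; it only cites Bourbaki (\emph{Alg\`ebre}, Ch.~IV, \S 4, No~7, Prop.~10), so your self-contained argument --- cotangent space $I/I^2$ for one implication, a formal inverse function theorem by successive approximation for the other --- is a legitimate replacement and is essentially the standard proof behind that reference. The approximation direction is sound: the map induced by $\Psi_A$ on $I^{k+1}/I^{k+2}\simeq \mathrm{Sym}^{k+1}(I/I^2)$ is the $(k+1)$-st \emph{symmetric} power of $A$ rather than the tensor power, but invertibility follows from functoriality either way; the error terms drop into higher powers of $I$, completeness gives the limit, and your closing monoid argument (the right inverse is again of the form $\Psi_B$ with $B=A^{-1}$ invertible, hence itself right-invertible, which upgrades everything to two-sided inverses) is correct.

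One justification in the easy direction is wrong, although the step it supports is true. You assert that the $R$-algebra retraction $R\lbr x_1,\ldots,x_n\rbr\to R$ is unique; it is not for general $R$: for $R=k[\epsilon]/(\epsilon^2)$ the assignment $x_1\mapsto \epsilon$, $x_i\mapsto 0$ for $i\geq 2$ defines a continuous $R$-algebra retraction different from $\aug$. To see that $\Psi_A^{-1}$ preserves $I$, argue directly instead: write $\Psi_A^{-1}(x_i)=c+f$ with $c\in R$ and $f\in I$; since $f=\sum_j x_j h_j$ is a finite combination and $\Psi_A(x_j)\in I$, we get $\Psi_A(f)\in I$, so $x_i=c+\Psi_A(f)$, and the uniqueness of the decomposition $R\lbr x_1,\ldots,x_n\rbr = R\oplus I$ forces $c=0$. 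Since $I=(x_1,\ldots,x_n)$ and $\Psi_A^{-1}$ is a ring homomorphism, this yields $\Psi_A^{-1}(I)\subseteq I$ and $\Psi_A^{-1}(I^2)\subseteq I^2$, after which your $I/I^2$ argument goes through verbatim.
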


\begin{proof}
See \cite[Ch. IV, \S 4, No 7, Prop. 10]{Bo58}.
\end{proof}

\begin{cor} \label{changevarinj_prop}
If the determinant of the matrix $A$ is regular, then the map $\Psi$ is injective.
\end{cor}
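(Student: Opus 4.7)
The plan is to reduce to the invertible case (Lemma~\ref{changevarbij_prop}) by inverting $\det(A)$. Since $\det(A)$ is regular in $R$, the localization map $R \to R[\tfrac{1}{\det(A)}]$ is injective. This injectivity passes to formal power series: a power series over $R$ maps to zero in $R[\tfrac{1}{\det(A)}]\lbr x_1,\ldots,x_n\rbr$ only if each of its coefficients vanishes after localization, hence already in $R$. So the natural map $R\lbr x_1,\ldots,x_n\rbr \to R[\tfrac{1}{\det(A)}]\lbr x_1,\ldots,x_n\rbr$ is injective.

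Next, I would consider the commutative square
\[
\xymatrix{
R\lbr x_1,\ldots,x_n\rbr \ar[r]^{\Psi_A} \ar@{^{(}->}[d] & R\lbr x_1,\ldots,x_n\rbr \ar@{^{(}->}[d] \\
R[\tfrac{1}{\det(A)}]\lbr x_1,\ldots,x_n\rbr \ar[r]^{\Psi_A} & R[\tfrac{1}{\det(A)}]\lbr x_1,\ldots,x_n\rbr
}
\]
where the bottom map is induced by the image of $A$ in $R[\tfrac{1}{\det(A)}]$, which is invertible there. By Lemma~\ref{changevarbij_prop} the bottom $\Psi_A$ is bijective, hence injective. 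Composing with the injective right vertical arrow, the composite $R\lbr x_1,\ldots,x_n\rbr \to R[\tfrac{1}{\det(A)}]\lbr x_1,\ldots,x_n\rbr$ (via the top $\Psi_A$ followed by the right vertical map) equals the injective composite going the other way around the square, and is therefore injective. This forces the top $\Psi_A$ to be injective.

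There is essentially no obstacle: the only point requiring a brief justification is that power series rings preserve injectivity of the base ring morphism, which is immediate by looking at coefficients term by term. The entire argument is just ``localize to invert the regular element $\det(A)$ and apply the invertible case.''
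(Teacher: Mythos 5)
Your proof is correct and follows essentially the same route as the paper: the paper passes to the total ring of fractions $T$ of $R$ (where every regular element, in particular $\det(A)$, becomes invertible), applies Lemma~\ref{changevarbij_prop} there, and chases the same commutative square. Your choice of the smaller localization $R[\tfrac{1}{\det(A)}]$ is an immaterial variation.
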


\begin{proof} Let $T$ be the total ring of fractions of $R$. There is
  a canonical injection $R\hookrightarrow T$. 
The matrix $A$ viewed as a matrix with coefficients in $T$ has an
invertible determinant. Therefore, by Lemma~\ref{changevarbij_prop}
the corresponding map 
$\Psi_A\colon T\lbr x_1,\ldots, x_n \rbr \to T\lbr x_1,\ldots, x_n\rbr$ is invertible.
The injectivity of $\Psi_A$ over $R$ then follows by the
commutative diagram
\[
\xymatrix{
R\lbr x_1,\ldots,x_n\rbr \ar[r]^{\Psi_A} \ar@{^{(}->}[d] & R\lbr x_1,\ldots, x_n\rbr \ar@{^{(}->}[d]\\
T\lbr x_1,\ldots,x_n\rbr \ar[r]^{\Psi_A}_{\simeq} & T\lbr x_1,\ldots, x_n\rbr
}. \qedhere
\]
\end{proof}

By a lattice, we mean a finitely generated free Abelian group, i.e. a
free $\Z$-module of finite rank. 

\begin{lem} \label{detmatrix_lem}
Given an inclusion of lattices $\cl \subseteq \cl'$ 
of the same rank $n$, an $n\times n$ matrix $A$ expressing a basis of
$\cl$ in terms of a basis of $\cl'$ satisfies $\det(A)=\pm
|\cl'/\cl|$, the order of the group $\cl'/\cl$ up to a sign.
\end{lem}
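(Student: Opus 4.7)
The plan is to reduce to a diagonal case via the elementary divisor theorem (Smith normal form) and then check that the claim is invariant under further change of basis on each side.

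First, I would apply the structure theorem for finitely generated modules over the PID $\Z$ to the inclusion $\cl \subseteq \cl'$. Since both are free of rank $n$, this yields a basis $(e_1,\ldots,e_n)$ of $\cl'$ and positive integers $d_1 \mid d_2 \mid \cdots \mid d_n$ such that $(d_1 e_1,\ldots,d_n e_n)$ is a basis of $\cl$. With respect to these two adapted bases, the matrix $A_0$ expressing the chosen basis of $\cl$ in terms of the chosen basis of $\cl'$ is simply the diagonal matrix $\mathrm{diag}(d_1,\ldots,d_n)$, so $\det(A_0) = d_1\cdots d_n$. At the same time, the structure theorem gives $\cl'/\cl \simeq \bigoplus_{i=1}^n \Z/d_i\Z$, so $|\cl'/\cl| = d_1\cdots d_n = \det(A_0)$. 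This settles the result for the adapted bases.

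Next, I would deduce the general case. Given arbitrary bases of $\cl$ and $\cl'$, let $P$ be the matrix expressing the arbitrary basis of $\cl$ in terms of the adapted basis $(d_i e_i)$, and let $Q$ be the matrix expressing the adapted basis $(e_i)$ in terms of the arbitrary basis of $\cl'$. Both $P$ and $Q$ are invertible integer matrices, being change-of-basis matrices between two bases of the same free abelian group, hence $\det(P),\det(Q) \in \{\pm 1\}$. Since the matrix $A$ in the statement is then obtained as $A = Q\, A_0\, P$, we get $\det(A) = \det(Q)\det(A_0)\det(P) = \pm d_1\cdots d_n = \pm|\cl'/\cl|$, as desired.

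There is no real obstacle here; this is a standard consequence of the elementary divisor theorem. The only care needed is to keep straight the direction of each change-of-basis matrix (so that the factorization $A = Q A_0 P$ really holds) and to note that invertibility over $\Z$ forces determinant $\pm 1$, which is what produces the sign ambiguity in the statement.
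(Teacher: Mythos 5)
Your proof is correct. The paper does not give an argument of its own here---it simply cites Bourbaki (\emph{Alg\`ebre}, Ch.~VII, \S 4, No~6, Cor.~1)---and your Smith-normal-form reduction, together with the observation that change-of-basis matrices over $\Z$ have determinant $\pm 1$, is exactly the standard proof of that cited fact.
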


\begin{proof}
It follows from \cite[Ch. VII, \S 4, No 6, Cor. 1]{Bo58}.
\end{proof}

An $n$-tuple of integers $(k_1,\ldots,k_n)$ is called a unimodular row
if there exist integers $(a_1,\ldots,a_n)$ such that $\sum_{i=1}^n k_i a_i =1$. 

\begin{lem} \label{unimodular_lem}
Any unimodular row $(k_1,\ldots,k_n)$ can be completed to a basis of $\Z^n$.
\end{lem}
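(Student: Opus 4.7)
The plan is to set $v = (k_1,\ldots,k_n) \in \Z^n$ and show that the quotient $\Z^n/\Z v$ is a free abelian group of rank $n-1$. Once that is established, choosing lifts $w_2,\ldots,w_n \in \Z^n$ of a $\Z$-basis of $\Z^n/\Z v$ will automatically give a basis $\{v, w_2, \ldots, w_n\}$ of $\Z^n$, since the short exact sequence
\[
0 \to \Z v \to \Z^n \to \Z^n/\Z v \to 0
\]
splits once the quotient is free, and $\Z v$ is itself free of rank $1$ (as $v \neq 0$, guaranteed by the unimodularity relation $\sum_i a_i k_i = 1$).

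The key step is therefore to prove that $\Z^n/\Z v$ has no torsion. Suppose that $c \cdot [w] = 0$ in $\Z^n/\Z v$ for some $w \in \Z^n$ and some nonzero integer $c$; this means $cw = kv$ in $\Z^n$ for some integer $k$, i.e. $cw_i = kk_i$ for every $i$. Multiplying the $i$-th equation by $a_i$ (the integers furnished by unimodularity) and summing gives
\[
c \sum_{i=1}^n a_i w_i = k \sum_{i=1}^n a_i k_i = k,
\]
so $c$ divides $k$. Writing $k = c\ell$ and using that $\Z^n$ is torsion-free, the equation $cw = c\ell v$ forces $w = \ell v$, hence $[w] = 0$. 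Thus $\Z^n/\Z v$ is a finitely generated torsion-free abelian group, hence free, and a standard rank computation shows it has rank $n-1$.

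I do not anticipate a genuine obstacle here; the only point requiring care is the manipulation establishing torsion-freeness, which is precisely where the unimodularity hypothesis (as opposed to the weaker condition $\gcd(k_1,\ldots,k_n)=1$, which happens to be equivalent over $\Z$) is used to exhibit $c \mid k$. Once torsion-freeness is in hand, the splitting of the short exact sequence is automatic for free quotients of $\Z$-modules, and assembling the basis is immediate.
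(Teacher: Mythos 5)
Your proof is correct. It differs from the paper's in the choice of short exact sequence: the paper considers the surjection $\Z^n \to \Z$, $x \mapsto \sum_i k_i x_i$, which splits because $\Z$ is projective, and takes its kernel (a free direct factor of rank $n-1$) as the complement; you instead quotient by $\Z v$ and prove directly that $\Z^n/\Z v$ is torsion-free, hence free, using the B\'ezout coefficients $a_i$ as a retraction $\Z^n \to \Z v$. The two arguments are dual to one another and both hinge on the relation $\sum_i a_i k_i = 1$, but yours has the advantage of making explicit why the vector $v$ itself (and not merely some preimage of $1$ under a functional) sits inside a basis --- a point the paper's one-line proof leaves implicit --- at the cost of invoking the structure theorem for finitely generated abelian groups where the paper only needs projectivity of $\Z$. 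The parenthetical about $\gcd(k_1,\ldots,k_n)=1$ being a ``weaker'' condition is a little confusing since, as you note, it is equivalent to unimodularity over $\Z$ by B\'ezout, but this does not affect the argument.
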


\begin{proof}
The kernel of the map $\Z^n\to \Z$ sending $(a_1,\ldots,a_n)$ to $\sum_i k_i a_i$ is a direct factor of $\Z^n$, and it is torsion free so it is free.
\end{proof}

\end{document}